\documentclass[11pt,A4paper]{article}
\bibliographystyle{abbrv}
\usepackage[dvips]{graphicx}
\usepackage{bm}
\usepackage{amsmath,amsthm,amssymb,citesort}
\newtheorem{thm}{Theorem}[section]
\newtheorem{theorem}[thm]{Theorem}
\newtheorem{proposition}[thm]{Proposition}
\newtheorem{lemma}[thm]{Lemma}
\newtheorem{definition}[thm]{Definition}

\newtheorem{claim}[thm]{Claim}
\newtheorem{corollary}[thm]{Corollary}

\setlength{\topmargin}{0in}
\setlength{\headheight}{0in}
\setlength{\headsep}{0in}
\setlength{\textwidth}{6.5in} 
\setlength{\textheight}{9in}
\setlength{\evensidemargin}{0in}
\setlength{\oddsidemargin}{0in}
\setlength{\textfloatsep}{0.2in}

\newcommand{\bp}{{\bm p}}

\newcommand{\bmm}{{\bm m}}

\newcommand{\bd}{{\bm d}}

\newcommand{\calM}{{\cal M}}
\newcommand{\calN}{{\cal N}}
\newcommand{\bR}{{\bm R}}
\newcommand{\bS}{{\bm S}}
\newcommand{\Bvector}[2]{\stackrel{#1}{\mathstrut #2}}


\begin{document}
\title{Rooted-tree Decompositions with Matroid Constraints and the Infinitesimal Rigidity of Frameworks with Boundaries}
\author{Naoki Katoh\footnote{Department of Architecture and Architectural Engineering, Kyoto University {\tt naoki@archi.kyoto-u.ac.jp}} 
\and 
Shin-ichi Tanigawa\footnote{Research Institute for Mathematical Sciences, Kyoto University  {\tt tanigawa@kurims.kyoto-u.ac.jp}}
}
\maketitle
\begin{abstract}
As an extension of a classical tree-partition problem,
we consider  decompositions of graphs into edge-disjoint (rooted-)trees with an additional matroid constraint.
Specifically, suppose we are given a graph $G=(V,E)$, a multiset $\bR=\{r_1,\dots, r_t\}$ of vertices in $V$, and a matroid ${\cal M}$ on $\bR$.
We prove a necessary and sufficient condition for $G$ to be decomposed into 
$t$ edge-disjoint subgraphs $G_1=(V_1,T_1), \dots, G_t=(V_t,T_t)$ such that 
(i) for each $i$, $G_i$ is a tree with $r_i\in V_i$, and 
(ii)  for each $v\in V$, the multiset $\{r_i\in \bR\mid v\in V_i\}$ is a base of ${\cal M}$.
If ${\cal M}$ is a free matroid, this is a decomposition into $t$ edge-disjoint spanning trees;
thus, our result is a proper extension of Nash-Williams' tree-partition theorem.

Such a matroid constraint is motivated by combinatorial rigidity theory.
As a direct application of our decomposition theorem,  we  present characterizations of the infinitesimal rigidity of frameworks with non-generic ``boundary'',
which  extend classical Laman's theorem for generic 2-rigidity of bar-joint frameworks and Tay's theorem for generic $d$-rigidity of body-bar frameworks.
\end{abstract}

%
%
%

\section{Introduction}
In this paper two fundamental results in combinatorial optimization, Tutte-Nash-Williams tree-packing theorem and Nash-Williams tree-partition theorem, are extended.
In 1961 Tutte~\cite{Tutte:1961} and Nash-Williams~\cite{Nash:1961} independently proved that
an undirected graph $G=(V,E)$ contains $k$ edge-disjoint spanning trees if and only if
$|\delta_G({\cal P})|\geq k|{\cal P}|-k$ holds for any partition ${\cal P}$ of $V$, where $\delta_G({\cal P})$ denotes
the set of edges of $G$ connecting  two distinct subsets of ${\cal P}$ 
and $|{\cal P}|$ denotes the number of subsets of ${\cal P}$.
As a dual form, Nash-Williams tree-partition theorem~\cite{nash1964decomposition} asserts that
an undirected graph $G=(V,E)$ can be decomposed into $k$ edge-disjoint spanning trees if and only if 
$|E|=k|V|-k$ and $|F|\leq k|V(F)|-k$ for any non-empty $F\subseteq E$, where $V(F)$ denotes the set of vertices incident to $F$.

These two theorems are sometimes referred to in terms of rooted-edge-connectivity,
as  edge-disjoint spanning trees indicate how to send distinct ``commodities'' from a specific root-node to other vertices without interference.
(In fact, the packing of spanning trees is an equivalent concept to rooted-edge-connectivity, see e.g.,~\cite{Frank2011}.)  
In this paper we address a more general situation. 
Suppose we have $t$ distinct roots, each of which has an ability of sending a commodity, and suppose 
the set of commodities possesses an independence structure, say, linear independence by regarding commodities as vectors. 
Then we are asked to decide whether one can send commodities from roots to every vertex  so that 
each vertex receives $k$ independent commodities without transmitting more than two distinct commodities through an edge.
This paper provides a polynomial time algorithm to answer to this question.

The study is  motivated by combinatorial rigidity theory.
One of major topics in rigidity theory is to describe a rigidity condition of architectural frameworks in terms of the underlying graphs,
where the connection to tree-packing condition (and its variants) has been particularly investigated in the literature (see e.g.,\cite{whiteley:88,tay:84,white:whiteley:87}). 
Based on this background together with our new decomposition theorem, we obtain extensions of two fundamental theorems in combinatorial rigidity theory,  
Laman's theorem for generic 2-rigidity of bar-joint frameworks and Tay's theorem for generic $d$-rigidity of body-bar frameworks.

\subsection{Rooted-tree Decompositions}
For a graph $G=(V,E)$, a pair $(T,r)$ of $T\subseteq E$ and $r\in V$ is called a {\em rooted-tree} if either (i) $T=\emptyset$ or 
(ii) $T$ is connected without cycles and $r\in V(T)$. Here $r$ is called a {\em root} of $T$.
For a rooted-tree $(T,r)$, we denote the {\it set} $V(T)\cup \{r\}$ by $V(T,r)$, 
and we say that $v\in V$ is {\em spanned}  by $(T,r)$ if $v\in V(T,r)$. 
Note that $V(T,r)=V(T)$ if $T\neq\emptyset$; otherwise $V(T,r)=\{r\}$ (which is not equal to $V(T)=\emptyset$).

As we mentioned, our focus is on a decomposition of a graph into edge-disjoint rooted-trees of specific roots.
For simplicity, a pair $(G, \bR)$ of a graph $G$ and a multiset $\bR$ of vertices (that specify {\em roots}) is called a {\em graph with roots}.
\begin{definition}
\normalfont
Let $(G,\bR)$ be a graph with roots $\bR=\{r_1,r_2,\dots, r_t\}$ and  ${\cal M}$ be a matroid on $\bR$.
Rooted-trees $(T_1,r_1), \dots, (T_t,r_t)$ are called {\em edge-disjoint} if $T_i\cap T_j=\emptyset$ for $1\leq i<j\leq t$;
they are said to be {\em basic} if the multiset $\{r_i\in \bR\mid  v\in V(T_i,r_i)\}$ is a base of ${\cal M}$ for each $v\in V$. 
We say that $(G,\bR)$ admits {\em a basic rooted-tree decomposition with respect to $\calM$} 
(or simply, a basic decomposition) if 
the edge set can be partitioned into basic edge-disjoint rooted-trees $(T_1,r_1), \dots, (T_t,r_t)$, (where $T_i=\emptyset$ is allowed).
\end{definition}

Figure~\ref{fig:example} shows an example for the case when $\calM$ is a graphic matroid.

For each $v\in V$ and $F\subseteq E$,  let $\bR_v=\{r_i\in \bR\mid r_i=v\}$
and  $\bR_F=\{r_i\in \bR\mid r_i\in V(F)\}$ as multi-subsets of $\bR$.
The following main theorem characterizes the decomposability into basic edge-disjoint rooted-trees.
\begin{theorem}
\label{thm:partition}
Let $G=(V,E)$ be a graph with a multiset $\bR=\{r_1,\dots, r_t\}$ of vertices, and ${\cal M}$ be a matroid on $\bR$ of rank $k$ and the rank function $r_{\cal M}:2^{\bR}\rightarrow \mathbb{Z}$.
Then, $(G,\bR)$ admits a basic rooted-tree decomposition with respect to $\calM$ if and only if
$(G,\bR)$ satisfies the following three conditions:
\begin{description} 
\item[(C1)] $\bR_v$ is independent in ${\cal M}$ for each $v\in V$;
\item[(C2)] $|F|+|\bR_F|\leq k|V(F)|-k+r_{\cal M}(\bR_F)$ for any non-empty $F\subseteq E$;
\item[(C3)] $|E|+|\bR|=k|V|$.
\end{description}
\end{theorem}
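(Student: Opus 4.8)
The plan is to prove necessity by a direct double-counting argument, and sufficiency by a matroid-intersection argument built on a ``count matroid'' over coloured edges.

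\textbf{Necessity.} Given a basic decomposition $(T_1,r_1),\dots,(T_t,r_t)$, I would argue as follows. Since $r_i\in V(T_i,r_i)$, the multiset $\bR_v$ is contained in the base $\{i:v\in V(T_i,r_i)\}$, which gives (C1). Because $|T_i|=|V(T_i,r_i)|-1$ for every rooted tree (the empty tree included), $|E|=\sum_i|V(T_i,r_i)|-t=\sum_{v\in V}|\{i:v\in V(T_i,r_i)\}|-|\bR|=k|V|-|\bR|$, which is (C3). For (C2), fix $\emptyset\neq F\subseteq E$ and set $F_i=T_i\cap F$, $n_i=|V(T_i,r_i)\cap V(F)|$. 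As $F_i$ is a forest inside the tree $T_i$ with $V(F_i)\subseteq V(T_i,r_i)\cap V(F)$, we get $|F_i|\le\max(n_i-1,0)$, and summing yields $|F|\le\sum_{v\in V(F)}|\{i:v\in V(T_i,r_i)\}|-|\{i:n_i\ge1\}|=k|V(F)|-|\{i:n_i\ge1\}|$. Writing $A=\{i:r_i\in V(F)\}$ and $B=\{i:r_i\notin V(F),\ V(T_i)\cap V(F)\neq\emptyset\}$ we have $\{i:n_i\ge1\}=A\sqcup B$ and $|A|=|\bR_F|$; moreover for any $v\in V(F)$ the base $I_v=\{i:v\in V(T_i,r_i)\}$ lies in $A\cup B$, so $k=r_{\cal M}(I_v)\le r_{\cal M}(I_v\cap A)+|I_v\cap B|\le r_{\cal M}(\bR_F)+|B|$. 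Combining the displays gives $|F|+|\bR_F|\le k|V(F)|-|A|-|B|+|A|=k|V(F)|-|B|\le k|V(F)|-k+r_{\cal M}(\bR_F)$, i.e.\ (C2).

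\textbf{Sufficiency, reduction.} Assuming (C1)--(C3), I would first reduce the ``basic'' requirement to an $\cal M$-independence requirement. It suffices to find edge-disjoint \emph{forests} $T_1,\dots,T_t$ covering $E$ such that for every $v$ the set $\{i:v\in V(T_i)\cup\{r_i\}\}$ (which equals $\bR_v\cup\{i:v\in V(T_i)\}$) is independent in $\cal M$. Indeed, writing $W_i=V(T_i)\cup\{r_i\}$, independence forces $\sum_i|W_i|=\sum_v|\{i:v\in W_i\}|\le k|V|$, whereas $|W_i|\ge|T_i|+1$ always, so $\sum_i|W_i|\ge|E|+t=k|V|$ by (C3); hence both inequalities are tight. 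Tightness of $|W_i|\ge|T_i|+1$ forces each non-empty $T_i$ to be connected and to contain $r_i$, so $(T_i,r_i)$ is a rooted tree; tightness of the other forces $|\{i:v\in V(T_i,r_i)\}|=k$ for every $v$, so that $\cal M$-independent set has size $\rk(\cal M)$ and is a base. Thus the forest problem is equivalent to the original one under (C3).

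\textbf{Sufficiency, the matroid argument.} I would model this on the ground set $\hat E=E\times\{1,\dots,t\}$ of ``coloured edges''. Introduce the count matroid $\cal N$ on $\hat E$ whose independent sets are those $A$ for which each colour class $A_i=\{e:(e,i)\in A\}$ is a forest and, for every $v$, $\bR_v\cup\{i:v\in V(A_i)\}$ is independent in $\cal M$; its rank function should arise from an (intersecting-)submodular set function built from the term $k|V(\cdot)|-k+r_{\cal M}(\bR_{(\cdot)})$ together with graph-component corrections, with (C1) guaranteeing that single coloured edges behave as non-loops. Let $\cal P$ be the partition matroid on $\hat E$ selecting at most one colour per edge. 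By the reduction, a basic decomposition exists iff $\cal N$ and $\cal P$ have a common independent set of size $|E|$: such a set is a base of $\cal P$, hence by (C3) a base of $\cal N$, so each colour class is a spanning tree. By the matroid-intersection min--max theorem this holds iff $r_{\cal P}(Z)+r_{\cal N}(\hat E\setminus Z)\ge|E|$ for all $Z\subseteq\hat E$, and an uncrossing argument should show it is enough to test $Z=(E\setminus F)\times\{1,\dots,t\}$, reducing the condition to $r_{\cal N}(F\times\{1,\dots,t\})\ge|F|$ for all $F\subseteq E$. On a connected $(V(F),F)$ one evaluates $r_{\cal N}(F\times\{1,\dots,t\})=k|V(F)|-k+r_{\cal M}(\bR_F)$, which is precisely (C2); the general case follows by summing over components and using (C1).

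\textbf{Main obstacle.} The technical heart is (i) verifying that the defining count function is submodular, so that $\cal N$ is genuinely a matroid, and computing $r_{\cal N}$ in closed form --- the matroid-rank term $r_{\cal M}$ makes both delicate, since a careless choice of count function over-counts --- and (ii) carrying out the uncrossing so the matroid-intersection inequality collapses to exactly (C2)/(C3), producing the term $r_{\cal M}(\bR_F)$ with the right sign. An alternative route is induction on $\rk(\cal M)$: a coloop of $\cal M$ forces the associated tree to be spanning, so one peels it off and passes to $\cal M$ with that element deleted; but then establishing the existence of a ``peelable'' tree and dealing with the coloop-free case become the obstacles, so the matroid-intersection approach seems preferable.
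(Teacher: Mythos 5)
Your necessity argument is correct and amounts to the same double count the paper performs (the paper phrases it by orienting each $T_i$ into an arborescence, but the content is identical), and your reduction of the ``basic'' requirement to ``edge-disjoint forests covering $E$ with $\calM$-independent colour profiles'' via the tightness of $\sum_i|W_i|$ is also sound. The gap is exactly at the step you flag as the technical heart: the independence system $\calN$ on $E\times\{1,\dots,t\}$ that you describe is \emph{not} a matroid, so the matroid-intersection min--max theorem cannot be invoked and no (intersecting-)submodular count function can induce it. Concretely, let $\calM$ be the rank-$2$ matroid on colours $\{1,2,3,4\}$ with parallel classes $\{1,3\}$ and $\{2,4\}$, and let $G$ contain the path $a\,e_1\,b\,e_2\,c$ with $\bR_a=\bR_b=\bR_c=\emptyset$ (place the four roots on other vertices, two per vertex, so that (C1) holds). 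Then $A=\{(e_1,1),(e_2,2)\}$ and $B=\{(e_1,3),(e_1,4),(e_2,3)\}$ both satisfy your defining conditions --- every colour class is a forest and every vertex sees an independent set of colours --- yet no element of $B\setminus A$ can be added to $A$: $(e_1,3)$ creates the dependent pair $\{1,3\}$ at $a$, while $(e_1,4)$ and $(e_2,3)$ create the dependent sets $\{1,2,4\}$ and $\{1,2,3\}$ at $b$. The augmentation axiom fails, so $\calN$ does not exist as a matroid; note that this happens even for $\calM=U_{1,2}\oplus U_{1,2}$, a case in which the theorem itself is true and reachable by matroid union, just not through your construction. This is consistent with the authors' explicit warning that for general $\calM$ the theorem ``has no clear (and direct) connection to the matroid union theorem.''

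The paper's actual sufficiency proof is structurally different: an induction on $|E|$ driven by the submodular function $f_{\calM,k}(F)=k(|V(F)|-1)-(|\bR_F|-r_{\calM}(\bR_F))$ and its tight sets. If some proper tight set $F$ is \emph{unbalanced} (some $v\in V(F)$ has ${\rm sp}_{\calM}(\bR_v)\neq{\rm sp}_{\calM}(\bR_F)$), one recursively decomposes $G[F]$ after adjoining coloops, replaces $F$ minus a suitable spanning part by fresh roots parallel to the old ones, and recurses on a strictly smaller instance; otherwise one finds a ``good'' edge $uv$ with ${\rm sp}_{\calM}(\bR_u)\neq{\rm sp}_{\calM}(\bR_v)$, deletes it, and adds a new root at $u$ parallel to an element of $\bR_v$. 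Your alternative sketch (peeling off coloops of $\calM$) also does not obviously close, for the reason you give. To repair your route you would need a genuinely different pair of matroids --- as is done in the \emph{directed} analogue, where the orientation is part of the data and the in-arcs at each vertex give a cleaner structure --- but as written the sufficiency is not established.
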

Notice that, if ${\cal M}$ is a free matroid, this coincides with Nash-Williams' tree-partition theorem.
In Theorem~\ref{thm:dual}, we give a dual form of Theorem~\ref{thm:partition} as a proper extension of Tutte-Nash-Williams' tree-packing theorem.

Throughout the paper, we will refer to the conditions given in Theorem~\ref{thm:partition} as (C1), (C2) and (C3) with respect to $\calM$, respectively. 
Checking (C2) can be easily reduced to a submodular function minimization and thus done in polynomial time.
In Section~\ref{sec:alg} we present an efficient algorithm via matroid intersection. 

Note that, even though checking (C2) can be reduced to matroid intersection, 
this fact alone does not imply Theorem~\ref{thm:partition}.
Indeed, if ${\cal M}$ can be written as the direct sum of $k$ matroids of rank $1$, Theorem~\ref{thm:partition} straightforwardly follows from the matroid union theorem;
however for general ${\cal M}$ Theorem~\ref{thm:partition} has no clear (and direct) connection to the matroid union theorem.

\begin{figure}[th]
\centering
\begin{minipage}{0.4\textwidth}
\centering
\includegraphics[scale=1]{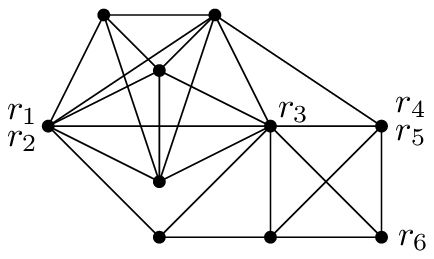}
\par
(a)
\end{minipage}
\begin{minipage}{0.4\textwidth}
\centering
\includegraphics[scale=1]{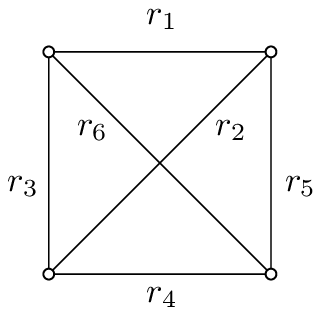}
\par
(b)
\end{minipage}

\vspace{\baselineskip}

\begin{minipage}{0.98\textwidth}
\centering
\begin{minipage}{0.32\textwidth}
\centering
\includegraphics[scale=1]{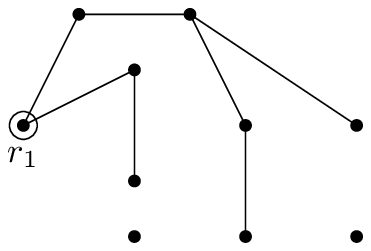}
\par 
$(T_1,r_1)$
\end{minipage}
\begin{minipage}{0.32\textwidth}
\centering
\includegraphics[scale=1]{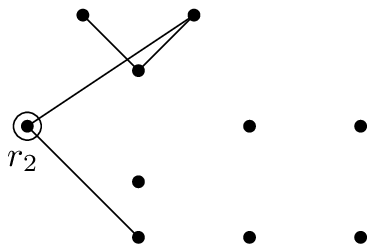}
\par 
$(T_2,r_2)$
\end{minipage}
\begin{minipage}{0.32\textwidth}
\centering
\includegraphics[scale=1]{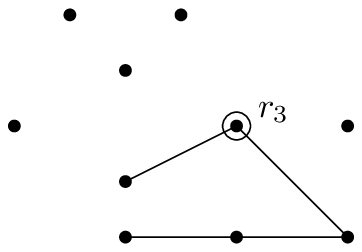}
\par
$(T_3,r_3)$
\end{minipage}

\vspace{\baselineskip}

\begin{minipage}{0.32\textwidth}
\centering
\includegraphics[scale=1]{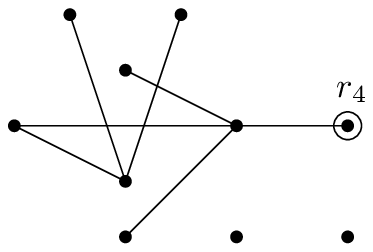}
\par
$(T_4,r_4)$
\end{minipage}
\begin{minipage}{0.32\textwidth}
\centering
\includegraphics[scale=1]{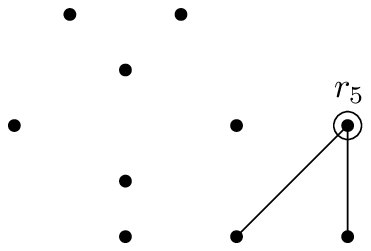}
\par
$(T_5,r_5)$
\end{minipage}
\begin{minipage}{0.32\textwidth}
\centering
\includegraphics[scale=1]{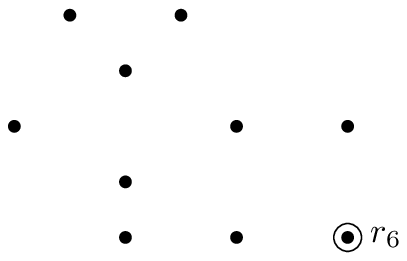}
\par
$(T_6,r_6)$
\end{minipage}

\vspace{0.5\baselineskip}
(c)
\end{minipage}
\caption{(a) A graph $G$ with roots $\bR=\{r_1,\dots, r_6\}$. (b) A graph representing a graphic matroid $\calM$ on $\bR$. 
(c) A basic rooted-tree decomposition. Each vertex is spanned by exactly three rooted-trees whose roots form a spanning tree in the graph (b).}
\label{fig:example}
\end{figure}

\subsection{Related Works}
Nash-Williams' tree-partition theorem is nowadays a special case of the matroid union theorem, 
as it is equivalent to packing bases of the graphic matroid of $G$  (see e.g.,\cite{Schriver,Frank2011}).
For applications to rigidity theory, Whiteley~\cite{whiteley:88} discussed a generalization of Nash-Williams' theorem by mixing spanning trees and spanning pseudoforests.
(A graph is said to be a spanning pseudoforest if each connected component contains exactly one cycle).
Based on the matroid union theorem, he observed  
that, for two integers $k$ and $l$ with $k\geq l$, $G=(V,E)$ can be partitioned into $l$ edge-disjoint spanning trees and $k-l$ spanning pseudoforests
if and only if $|E|=k|V|-l$ and $|F|\leq k|V(F)|-l$ for any non-empty $F\subseteq E$.
The range of $l$ was later broadened by  Haas~\cite{haas:2002}.
Algorithms for checking these counting conditions or computing decompositions were discussed 
in e.g.~\cite{Imai:1983,sugihara1985detection,gabow:1992,berg:jordan:2003b,lee:streinu:2005,ito2011constant}.

These types of matroids are referred to as {\em count matroids}~\cite{Frank2011} or {\em sparsity matroids}, and have a wide range of applications in combinatorial geometry, including rigidity theory (see, e.g.,\cite{Whitley:1997}).
Our primary motivation of this study is indeed to extend the decomposition theory of these count matroids to more general forms.
For this purpose,  we have presented a special case of Theorem~\ref{thm:partition} in \cite{rooted_forest} where ${\cal M}$ is restricted to a variant of uniform matroid.

Another direction of related research is the packing of branchings into digraphs.
A directed forest, called a {\em branching}, is a digraph in which the in-degree of each node is at most one.
The set of nodes of in-degree $0$ is called the root-set.
For $R\subseteq V$, a branching is said to be a {\em spanning} branching with roots $R$ if every vertex can reach to a root in $R$.
The well-known Edmonds branching-theorem~\cite{edmonds1972edge} is a good characterization of  a digraph $D=(V,A)$ with a given collection of root-sets 
$\{R_1,\dots, R_k\}$ to contain $k$ arc-disjoint spanning branchings with roots $R_i$.
However, Edmonds' branching theorem can produce only {\em spanning} branchings;
in general, the problem of answering whether there exist $k$ arc-disjoint branchings spanning a proper subset of $V$ is known to be NP-complete, 
and only a few special cases are known to be solvable in polynomial time~\cite{berczi2009packing,kamiyama2009arc,fujishige-note}.

Even in the undirected case, the problem becomes intractable if we drop the term ``spanning'' from the decomposition.
In fact, the problem of deciding whether an undirected graph can be partitioned into two edge-disjoint trees is known to be NP-complete~\cite{Palvolgyi}. 
Our main theorem (Theorem~\ref{thm:partition}) however asserts that one can actually relax the condition of ``spanning'' by introducing an appropriate matroid constraint.

\subsection{Applications to Rigidity Theory}
Theorem~\ref{thm:partition} has various applications to rigidity theory.
A {\em bar-joint framework} is a structure consisting of bars connected by universal joints at endpoints as shown in Figure~\ref{fig:bar_joint}(a). 
The underlying graph is obtained by associating each joint with a vertex and each bar with an edge,
thus  a bar-joint framework can be identified with a pair $(G,{\bm p})$ of a graph $G$ and ${\bm p}:V\rightarrow \mathbb{R}^d$.
Celebrated Laman's theorem~\cite{laman:Rigidity:1970} asserts that
$(G,\bp)$ is minimally rigid on a generic $\bp$ in the plane if and only if $|E|=2|V|-3$ and $|F|\leq 2|V(F)|-3$ for any nonempty $F\subseteq E$,
where $\bp$ is called {\em generic} if the set of coordinates is algebraically independent over $\mathbb{Q}$.
See, e.g.,~\cite{graver:servatius:servatius:CombinatorialRigidity:1993} for formal definition.

Although characterizing generic 3-dimensional rigidity of bar-joint frameworks is recognized as one of the most difficult open problems in this field, 
there are solvable structural models even in higher dimension.
One of the fundamental results in this direction is a combinatorial characterization of generic rigidity of {\em body-bar frameworks} shown by Tay~\cite{tay:84}.
Body-bar frameworks consist of disjoint rigid bodies articulated by bars as illustrated in Figure~\ref{fig:body_bar}(a), and the underlying graphs are extracted 
by associating each body with a vertex and each bar with an edge.
Tay~\cite{tay:84} proved that the generic rigidity of {\em body-bar frameworks} can be characterized in terms of the underlying graphs by Nash-Williams' condition for decomposing into ${d+1\choose 2}$ 
spanning trees.

In this paper, replacing Nash-Williams' theorem with Theorem~\ref{thm:partition}, we obtain extensions of Laman's theorem and Tay's theorem to the models with {\em boundary}.
In most applications, especially in engineering context, a framework has a relation to the external environment, where
several joints/bodies are connected to the ground or walls. 
Figure~\ref{fig:bar_joint}(b) and Figure~\ref{fig:body_bar}(b)(c) show typical examples:
Figure~\ref{fig:bar_joint}(b) illustrates a so-called  {\em pinned bar-joint framework}, where three joints are fixed in the space;
in Figure~\ref{fig:body_bar}(b) and (c) illustrate body-bar counterparts, where several bodies are linked to the ground by bars or pins.
This motivates us to investigate {\em frameworks with boundary}. 

Frameworks with boundary are indeed an old concept even in the mathematical study of rigidity (see~\cite{jordan2010pinned} for survey and fundamental facts).
In fact, combinatorial characterizations of these models straightforwardly follow from Laman's theorem or Tay's theorem, if we assume ``genericity'' of configuration of boundary.
For example, to extend Laman's theorem to pinned bar-joint frameworks,  we just need to observe that
a 2-dimensional pinned bar-joint framework is rigid if and only if 
there are at least two pinned joints and connecting all pairs of pinned joints results in a rigid framework (without pinning).
This fact combined with Laman's theorem implies a combinatorial characterization of 2-dimensional pinned bar-joint frameworks for generic rigidity.
This straightforward extension  however requires that $\bp$ should be generic and in particular pinned joints have to be generic, 
which cannot be achieved in most applications as joints are usually pinned down on the ground or walls.   

Motivated by these practical requirements, we shall address the problem of coping with ``non-generic'' boundaries.
Our new results assert that, even without genericity assumption for boundary condition, a naturally extended statement is true for characterizing infinitesimal rigidity.
Although the formal description will be given in Sections~\ref{sec:body_bar} and \ref{sec:bar_joint},
counting conditions (C1)(C2)(C3) of Theorem~\ref{thm:partition} will naturally appear as a necessary condition for the infinitesimal rigidity of frameworks with ``non-generic'' boundary,
and the existence of basic rooted-tree decompositions enables us to  show even the sufficiency.

Below, we list structural models we address in this paper:
\begin{itemize}
\item bar-joint frameworks with {\em bar-boundary} in $\mathbb{R}^2$, in which the Pl{\"u}cker coordinate of each boundary-bar is predetermined (Theorem~\ref{thm:bar_joint_bar});
\item bar-joint frameworks with {\em pin-boundary} in $\mathbb{R}^2$, in which the coordinate of each pin is predetermined (Theorem~\ref{thm:pin_laman_projective});
\item bar-joint frameworks with {\em slider-boundary} in $\mathbb{R}^2$, in which the direction of each slider is predetermined (Theorem~\ref{thm:bar_slider});
\item body-bar frameworks with {\em bar-boundary} in $\mathbb{R}^d$, in which the Pl{\"u}cker coordinate of each boundary-bar is predetermined (Theorem~\ref{thm:body_bar_bar});
\item body-bar frameworks with {\em pin-boundary} in $\mathbb{R}^d$, in which the coordinate of each pin is predetermined (Theorem~\ref{thm:body_bar}).
\end{itemize}
The second one (Theorem~\ref{thm:pin_laman_projective}) was recently observed by Servatius, Shai and Whiteley~\cite{servatius2010combinatorial} for engineering applications,
where the proof is done by the the so-called Henneberg construction. We shall present it as a corollary of a more general statement (Theorem~\ref{thm:bar_joint_bar}).
We should note that main results of \cite{servatius2010combinatorial,servatius2010geometric} are a combinatorial characterization of {\em assur graphs} and their geometric properties in the plane.
Our new observations for body-bar frameworks might be useful for developing a higher dimensional counterpart. 

2-dimensional bar-joint frameworks with slider-boundary (called {\em bar-joint-slider frameworks}) were previously studied in Streinu and Theran~\cite{streinu2010slider},
where an interesting relation between decompositions and non-generic realizations was observed.
Theorem~\ref{thm:bar_slider}, which is  a corollary of Theorem~\ref{thm:pin_laman_projective},  extends their result.
(This result was already presented in a conference~\cite{isaac2009} without detailed proof.)

\begin{figure}[t]
\centering
\begin{minipage}{0.4\textwidth}
\centering
\includegraphics[width=0.5\textwidth]{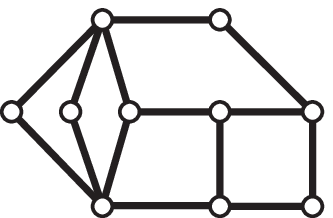}
\par
(a)
\end{minipage}
\begin{minipage}{0.4\textwidth}
\centering
\includegraphics[width=0.5\textwidth]{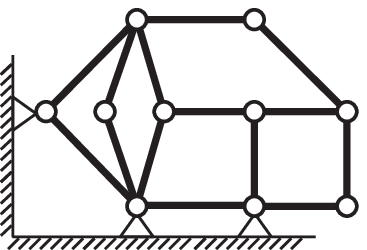}
\par
(b)
\end{minipage}
\caption{(a) Bar-joint framework. (b) Pinned bar-joint framework.}
\label{fig:bar_joint}
\end{figure}

\begin{figure}[t]
\centering
\begin{minipage}{0.24\textwidth}
\centering
\includegraphics[scale=0.48]{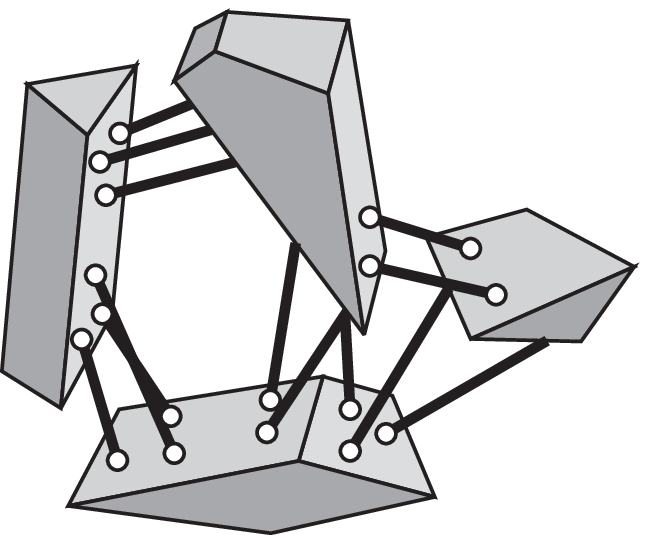}
\par
(a)
\end{minipage}
\begin{minipage}{0.37\textwidth}
\centering
\includegraphics[scale=0.45]{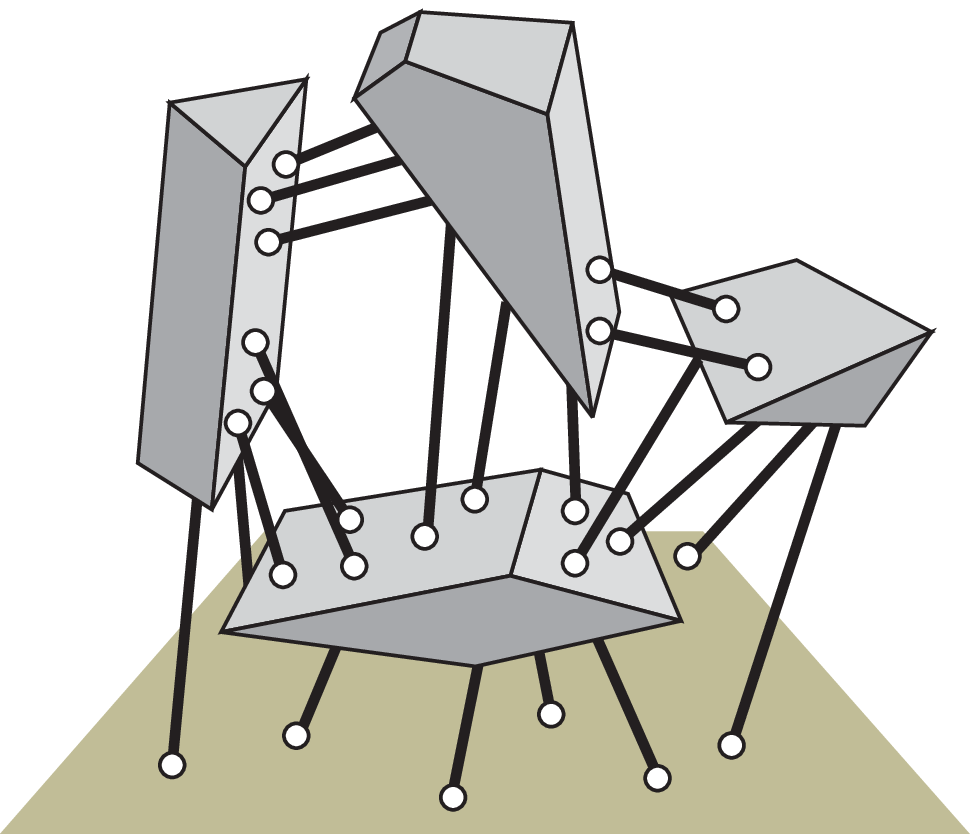}
\par
(b)
\end{minipage}
\begin{minipage}{0.37\textwidth}
\centering
\includegraphics[scale=0.46]{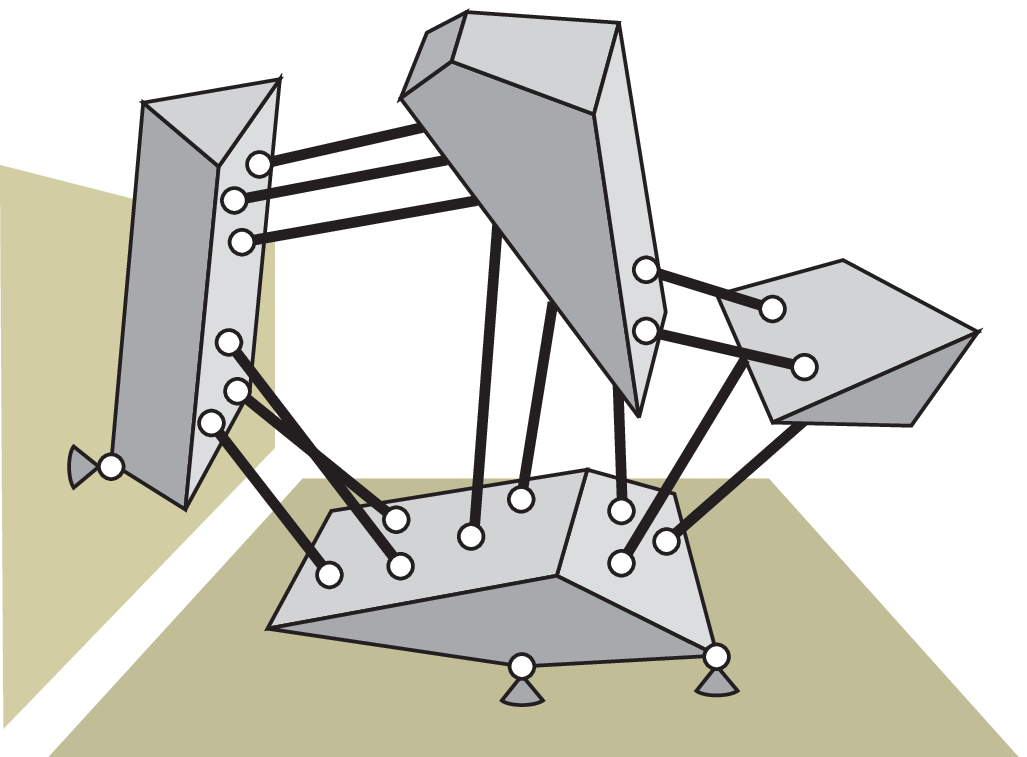}
\par
(c)
\end{minipage}
\caption{(a)Body-bar framework. (b)Body-bar framework with bar-boundary. (c)Body-bar framework with pin-boundary (pinned body-bar framework).}
\label{fig:body_bar}
\end{figure}

\subsection{Organizations}
We first review a combinatorial background in Section~\ref{sec:preliminaries}
and then present a proof of Theorem~\ref{thm:partition} in Section~\ref{sec:partition}.
In Section~\ref{sec:alg} we discuss computational issues.
In Section~\ref{sec:dual} we present a dual form of Theorem~\ref{thm:partition}.
Applications of basic-decompositions to rigidity theory are discussed in Sections~\ref{sec:body_bar} and~\ref{sec:bar_joint}.
We conclude the paper by listing remarks.

\section{Preliminaries}
\label{sec:preliminaries}
For a matroid $\calM=(S,{\cal I})$ on a finite set $S$, the rank function of $\calM$ is denoted by $r_{\calM}:S\rightarrow \mathbb{Z}$.
$r_{\calM}(S)$ is especially called the {\em rank of $\calM$}, which is simply denoted by $r_{\calM}$.
A set $X\subseteq S$ is called a {\em spanning set} of ${\cal M}$ if $r_{\cal M}(X)=r_{\calM}$. 
For $X\subseteq S$, let ${\rm sp}_{\calM}(X)=\{x\in S\mid r_{\calM}(X+x)=r_{\calM}(X)\}$.
The {\em restriction} of $\calM$ to $X\subseteq S$ is ${\cal M}|X=(X,\{I\in{\cal I}\mid I\subseteq X\})$, which forms a matroid on $X$.
The {\em truncation} of ${\cal M}$ is defined as the one of rank function $r^{\downarrow}(X)=\min\{r_{\calM}(X),r_{\cal M}(S)-1 \} \ (X\subseteq S)$. 
An element $x\in X$ is called a {\em coloop} if $r_{\calM}(S-x)<r_{\calM}(S)$.
$x\in X$ is said to be {\em parallel} to $y\in X$ if $r_{\calM}(\{x,y\})=r_{\calM}(\{x\})=r_{\calM}(\{y\})=1$.

We will use the following preliminary result 
concerning the  matroid induced by a monotone submodular function, which can be found in e.g.~\cite[Chapter~12]{oxley}.
The function $f:2^S\to \mathbb{R}$ is called {\em submodular} if $f(X)+f(Y)\geq f(X\cup Y)+f(X\cap Y)$ for any $X,Y\subseteq S$
and {\em monotone} if $f(X)\leq f(Y)$ for any $X\subseteq Y\subseteq S$.
Also $f$ is called {\em intersecting submodular} if the submodular inequality holds for every pair $X,Y\subset S$ with $X\cap Y\neq \emptyset$.

Let $f:2^S\to \mathbb{Z}$ be an integer-valued monotone submodular function.
It is known that $f$ {\em induces} a matroid on $S$, denoted by ${\cal N}(f)$, 
whose collection of independent sets is written by
${\cal I}=\{I\subseteq S\mid  |I'|\leq f(I') \text{ for all non-empty } I'\subseteq I\}$.
The following proposition provides an explicit formula expressing the rank function $r_{\calN(f)}$ of ${\cal N}(f)$, see e.g.,~\cite{Schriver,fujishige,Frank2011}.
\begin{proposition}
\label{prop:rank}
Let $f$ be an integer-valued monotone submodular function on $S$ 
satisfying $f(X)\geq 0$ for every non-empty $X\subseteq S$.
Then, for any non-empty $X\subseteq S$, the rank of $X$ in ${\cal N}(f)$ is given by
\begin{equation}
\label{eq:rank1}
\mbox{$r_{\calN(f)}(X)=\min\{|X_0|+\sum_{i=1}^m f(X_i)\}$},
\end{equation}
where the minimum is taken over all partitions $\{X_0,X_1,\dots, X_m\}$ of $X$ such that $X_i\neq \emptyset$ for each $i=1,\dots,m$ (and $X_0$ may be empty).
\end{proposition}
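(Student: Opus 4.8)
Write $g(X)$ for the right-hand side of~\eqref{eq:rank1}, the minimum ranging over all partitions $\{X_0,X_1,\dots,X_m\}$ of $X$ with $X_1,\dots,X_m$ non-empty; this minimum is attained because the partition with $X_0=X$ (and $m=0$) is always admissible. The plan is to prove that $g$ is exactly the rank function of a matroid on $S$ whose independent sets are the sets appearing in the definition of $\calN(f)$, namely $\{I\subseteq S : |I'|\le f(I') \text{ for all non-empty } I'\subseteq I\}$; since a matroid is determined by its independent sets, that matroid must be $\calN(f)$, and therefore $r_{\calN(f)}=g$, which is~\eqref{eq:rank1}.

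The inequality $r_{\calN(f)}(X)\le g(X)$ is easy: for an independent set $I\subseteq X$ of $\calN(f)$ and any admissible partition of $X$, we have $|I\cap X_i|\le f(X_i)$ for each $i\ge 1$ — using $f(X_i)\ge 0$ (as $X_i\neq\emptyset$) when $I\cap X_i=\emptyset$, and independence of $I$ together with monotonicity of $f$ otherwise — while $|I\cap X_0|\le|X_0|$, so $|I|\le|X_0|+\sum_{i\ge1}f(X_i)$; maximizing over $I$ and minimizing over partitions gives the bound. For the reverse direction we verify that $g$ satisfies the matroid rank axioms and that the associated matroid has the prescribed independent sets. The conditions $g(\emptyset)=0$, $0\le g(X)\le|X|$ (take $X_0=X$, and use $f\ge 0$ on non-empty sets), monotonicity (restrict an optimal partition of the larger set to the smaller one and discard empty parts, using monotonicity of $f$), and the unit-increase property $g(X+e)\le g(X)+1$ (move $e$ into the discrete part of an optimal partition of $X$) are all routine. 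Granting that $g$ is submodular, the independent sets of the resulting matroid are $\{I:g(I)=|I|\}$, and this equals the prescribed family: if $|I'|\le f(I')$ for every non-empty $I'\subseteq I$ then every admissible partition of $I$ costs at least $|I_0|+\sum_{i\ge1}|I_i|=|I|$, so $g(I)=|I|$; while if $|I'|>f(I')$ for some non-empty $I'\subseteq I$, the partition with $I_0=I\setminus I'$ and single non-empty part $I'$ costs $|I\setminus I'|+f(I')<|I|$.

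The one substantive step, which I expect to be the main obstacle, is the submodularity of $g$: $g(X)+g(Y)\ge g(X\cup Y)+g(X\cap Y)$. The plan is the classical uncrossing argument associated with matroids induced by submodular functions: from optimal partitions of $X$ and of $Y$ one constructs a partition of $X\cup Y$ together with a partition of $X\cap Y$ whose costs sum to at most the total cost of the two given partitions, obtained by repeatedly uncrossing a crossing pair of (non-discrete) parts via the submodular inequality for $f$, while carefully accounting for the discrete parts $X_0$. I would follow the standard treatment in~\cite{Schriver,fujishige,Frank2011} for this bookkeeping rather than reproduce it here. With submodularity established, the argument above closes and yields $r_{\calN(f)}=g$, proving~\eqref{eq:rank1}.
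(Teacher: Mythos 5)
The paper does not actually prove Proposition~\ref{prop:rank}; it is quoted as a known fact with a pointer to \cite{Schriver,fujishige,Frank2011}, so there is no in-paper argument to compare yours against. Your outline is a correct version of one standard route: the upper bound $r_{\calN(f)}(X)\le|X_0|+\sum_i f(X_i)$ is argued exactly right (including the use of $f\ge 0$ on non-empty parts that miss $I$), and your identification of $\{I: g(I)=|I|\}$ with the defining family of $\calN(f)$ is clean — note that, once completed, your argument would also re-prove that this family is the independence system of a matroid, so there is no circularity. The one load-bearing step you defer, the submodularity of $g$, is indeed where all the work lives in this route, and the uncrossing bookkeeping is genuinely fiddly because after uncrossing two crossing non-discrete parts one obtains partitions of $X\cup Y$ and $X\cap Y$ rather than of $X$ and $Y$. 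Be aware that there is a shorter path to the reverse inequality that avoids proving submodularity of $g$ entirely: fix a base $B$ of $X$ in $\calN(f)$, call a non-empty $Y\subseteq X$ tight if $|B\cap Y|=f(Y)$, and check via the submodularity of $f$ (together with $|B\cap Y\cap Z|\le f(Y\cap Z)$, which uses independence of $B$ and $f\ge0$) that the union of two intersecting tight sets is tight; every $e\in X\setminus B$ lies in a tight set because $B+e$ is dependent, so taking the maximal tight sets as $X_1,\dots,X_m$ and $X_0=X\setminus\bigcup_i X_i\subseteq B$ yields a partition of cost exactly $|B|=r_{\calN(f)}(X)$, giving $g(X)\le r_{\calN(f)}(X)$ directly. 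Either way the statement is proved; your version is correct but outsources its hardest lemma, whereas the tight-set argument is self-contained at comparable length.
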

%

\section{Proof of Theorem~\ref{thm:partition}} 
\label{sec:partition}
Let $G=(V,E)$ be a graph with roots $\bR=\{r_1,\dots, r_t\}$, ${\cal M}=(\bR,{\cal I})$ be a matroid on $\bR$ with rank $k$ and the rank function $r_{\cal M}$.
We begin with an easier direction, the necessity of Theorem~\ref{thm:partition}.
\begin{proof}[Proof of the necessity of Theorem~\ref{thm:partition}]
For a basic decomposition, (C1) is obviously necessary.

To see (C2) and (C3), let us take a basic rooted-tree decomposition $\{(T_1,r_1),\dots, (T_t,r_t)\}$ of $(G,\bR)$ with respect to ${\cal M}$, where $t=|\bR|$.
$(T_i,r_i)$ can be converted to an arborescence (i.e., a directed tree) 
by assigning an orientation so that each vertex in $V(T_i,r_i)\setminus \{r_i\}$ has exactly one entering arc (and $r_i$ has no entering arc).
Since the decomposition is basic,  the sum of $|\bR_v|$ and the number of edges entering to $v$ is equal to $k$ for each $v\in V$.
This implies $|\bR|+|E|=k|V|$, and thus (C3) holds.

To see (C2),  let us consider $F\subseteq E$, and let $K\subseteq E$ be the set of edges oriented from a vertex in $V\setminus V(F)$ to a vertex in $V(F)$.
For the same reason as above, we have $|\bR_F|+|F|+|K|=k|V(F)|$.
Moreover, since the decomposition is basic, $|K|+r_{\cal M}(\bR_F)\geq k$ holds.
These imply $|\bR_F|+|F|\leq k|V(F)|-k+r_{\cal M}(\bR_F)$.
\end{proof}

For an integer  $c$, 
we define a set function $f_{{\cal M},c}:2^E\rightarrow \mathbb{Z}$ by 
\begin{equation}
f_{{\cal M},c}(F)=c(|V(F)|-1)-(|\bR_F|-r_{\cal M}(\bR_F)) \quad (F\subseteq E). 
\end{equation}
\begin{lemma}
\label{lem:sub}
Let $(G,\bR)$ be a graph with roots, $\calM$ be a matroid on $\bR$, and $c$ be an integer. 
Suppose (C1) is satisfied and $c\geq r_{\calM}$.
Then, $f_{\calM,c}$ is an integer-valued  monotone submodular function.
\end{lemma}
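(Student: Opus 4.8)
The plan is to verify the three properties separately: integer-valuedness, monotonicity, and submodularity of $f_{\calM,c}$.

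\medskip
\noindent\textbf{Integer-valuedness.} This is immediate: $c$ is an integer, $|V(F)|$ is an integer, and $|\bR_F|-r_\calM(\bR_F)$ is the difference of two integers, so $f_{\calM,c}(F)\in\mathbb{Z}$.

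\medskip
\noindent\textbf{Monotonicity.} Suppose $F\subseteq F'\subseteq E$. First I would handle the degenerate cases ($F=\emptyset$ gives $f_{\calM,c}(F)=0$, and I need to check $f_{\calM,c}(F')\geq 0$, which is where (C1) and $c\geq r_\calM$ enter: since $\bR_{F'}$ is a union of the independent sets $\bR_v$ over $v\in V(F')$ it has $r_\calM(\bR_{F'})\geq \max_v|\bR_v|$, but more usefully one argues directly that $|\bR_{F'}|-r_\calM(\bR_{F'})\leq c(|V(F')|-1)$ — see below). For the nondegenerate case, write $|V(F')|-|V(F)|=:p\geq 0$, the number of new vertices. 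The quantity $|\bR_{F'}|-|\bR_F|$ counts the roots sitting at these $p$ new vertices, so by (C1) (each $\bR_v$ independent) and submodularity of the matroid rank function, $r_\calM(\bR_{F'})-r_\calM(\bR_F)\geq 0$ while $(|\bR_{F'}|-r_\calM(\bR_{F'}))-(|\bR_F|-r_\calM(\bR_F)) = \sum_{v\in V(F')\setminus V(F)} \bigl(|\bR_v| - (r_\calM(\bR_F\cup\bR_v)-r_\calM(\bR_F))\bigr)$ when added vertices are processed one at a time; each term is at most $|\bR_v|\le r_\calM$ (using (C1)), hence the total increase in the subtracted term is at most $p\,r_\calM\le p\,c = c(|V(F')|-1)-c(|V(F)|-1)$. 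Therefore $f_{\calM,c}(F')\ge f_{\calM,c}(F)$. (The same one-vertex-at-a-time estimate starting from $F=\emptyset$ gives $f_{\calM,c}(F')\ge 0$, closing the degenerate case and in fact showing $f_{\calM,c}\ge 0$ everywhere, which is what Proposition~\ref{prop:rank} will want later.)

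\medskip
\noindent\textbf{Submodularity.} For $F_1,F_2\subseteq E$ I would split $f_{\calM,c}$ as the sum of $g(F):=c(|V(F)|-1)$ and $h(F):=-(|\bR_F|-r_\calM(\bR_F))$ and check each is submodular. For $g$: the map $F\mapsto |V(F)|$ is submodular (it is the standard "number of incident vertices" function, since $V(F_1\cup F_2)=V(F_1)\cup V(F_2)$ and $V(F_1\cap F_2)\subseteq V(F_1)\cap V(F_2)$), and subtracting the constant $c$ and scaling by $c>0$ preserves submodularity. For $h$: here is the only real subtlety. The function $F\mapsto |\bR_F|$ is modular on the sublattice where $\bR_{F_1\cap F_2}=\bR_{F_1}\cap\bR_{F_2}$ but in general only supermodular because $\bR_{F_1\cap F_2}$ can be strictly smaller than $\bR_{F_1}\cap\bR_{F_2}$; likewise $F\mapsto r_\calM(\bR_F)$ is a composition of a matroid rank function with the not-quite-modular map $F\mapsto\bR_F$. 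The clean way around this is to bound things at the vertex level: set $n(F):=|V(F)|$ and note $|\bR_F|=\sum_{v\in V(F)}|\bR_v|$, so $|\bR_F|$ depends on $F$ only through $V(F)$; thus I would define a monotone submodular function directly on subsets of $V$ and pull it back. Concretely, the matroid $\calM$ on $\bR$ together with the partition $\{\bR_v\}_{v\in V}$ induces a matroid on $V$ (via the obvious map sending $v$ to $\bR_v$, using (C1) so that each $\bR_v$ is independent — this is the matroid whose rank of $U\subseteq V$ is $r_\calM(\bigcup_{v\in U}\bR_v)$); call its rank function $\rho$. Then $r_\calM(\bR_F)=\rho(V(F))$ and $|\bR_F|=\sum_{v\in V(F)}|\bR_v|$, and one checks that $U\mapsto \sum_{v\in U}|\bR_v| - \rho(U)$ is monotone submodular on $2^V$ (it is the "nullity-type" function of a matroid-with-multiplicities; monotonicity uses (C1) so each added vertex contributes $|\bR_v|\ge \rho(U\cup\{v\})-\rho(U)$). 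Composing with the submodular, monotone, $\cup$-preserving map $F\mapsto V(F)$ and negating the sign appropriately, $h$ inherits submodularity (the composition of a monotone submodular function with $F\mapsto V(F)$ is submodular precisely because $V(\cdot)$ is submodular and $V(F_1\cup F_2)=V(F_1)\cup V(F_2)$). Adding $g$ and $h$ gives submodularity of $f_{\calM,c}$, completing the proof.

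\medskip
\noindent The step I expect to be the main obstacle is the submodularity of $h$: one must resist the temptation to treat $|\bR_F|-r_\calM(\bR_F)$ as an honest matroid nullity on $\bR$, because $F\mapsto\bR_F$ does not commute with intersection. Factoring through $V(F)$ and using (C1) to get the "number of incident vertices" function and the induced matroid on $V$ is the right device, and after that everything reduces to standard facts about submodularity under composition with a lattice homomorphism on joins.
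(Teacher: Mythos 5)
Your monotonicity argument is essentially sound (one small slip: $f_{\calM,c}(\emptyset)=c(0-1)-0=-c$, not $0$, but since you correctly show $f_{\calM,c}(F')\geq 0$ for non-empty $F'$ the conclusion survives). The genuine gap is in the submodularity step. The split $f_{\calM,c}=g+h$ with $g(F)=c(|V(F)|-1)$ and $h(F)=-(|\bR_F|-r_{\calM}(\bR_F))$ does not work, because $h$ is \emph{not} submodular. Concretely: take $V=\{a,b,w\}$, $F_1=\{ab,aw\}$, $F_2=\{ab,bw\}$, with $\bR_a=\{s\}$, $\bR_b=\{t\}$, $\bR_w=\{r\}$ and $r,s,t$ pairwise parallel in $\calM$ (so (C1) holds). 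Then $h(F_1)=h(F_2)=h(F_1\cup F_2)=-(3-1)=-2$ and $h(F_1\cap F_2)=h(\{ab\})=-(2-1)=-1$, so $h(F_1)+h(F_2)=-4<-3=h(F_1\cup F_2)+h(F_1\cap F_2)$. The root of the error is your claim that the ``nullity-type'' function $U\mapsto\sum_{v\in U}|\bR_v|-\rho(U)$ is submodular: nullity functions of matroids are \emph{supermodular} (modular minus submodular), and negating a supermodular monotone \emph{non-decreasing} function gives a non-increasing function, for which the composition with $F\mapsto V(F)$ goes the wrong way precisely at the point you flagged, namely $V(F_1\cap F_2)\subsetneq V(F_1)\cap V(F_2)$. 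So the device you chose does not close the subtlety you correctly identified; it reproduces it.

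The repair — and this is what the paper does — is to group the terms differently: write
\[
f_{\calM,c}(F)=\sum_{v\in V(F)}\bigl(c-|\bR_v|\bigr)-c+r_{\calM}(\bR_F),
\]
absorbing $-|\bR_F|=-\sum_{v\in V(F)}|\bR_v|$ into the vertex-weighted term rather than pairing it with the rank. By (C1) and $c\geq r_{\calM}$ each weight $c-|\bR_v|$ is non-negative, so $F\mapsto\sum_{v\in V(F)}(c-|\bR_v|)$ is a standard monotone submodular function of $F$; and $F\mapsto r_{\calM}(\bR_F)$ is monotone submodular on its own, since $\bR_{F_1\cup F_2}=\bR_{F_1}\cup\bR_{F_2}$, $\bR_{F_1\cap F_2}\subseteq\bR_{F_1}\cap\bR_{F_2}$, and $r_{\calM}$ is monotone submodular. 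The sum of these two pieces and the constant $-c$ is then monotone submodular. In the counterexample above the deficit of your $h$ is exactly cancelled by the slack $c\,(|V(F_1)\cap V(F_2)|-|V(F_1\cap F_2)|)$ of your $g$, which is why only the combined grouping succeeds.
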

\begin{proof}
It is known that, for any ${\bm b}:2^V\rightarrow \mathbb{Z}_+$, 
the set function $g:2^E\rightarrow \mathbb{Z}_+$ defined by $g(F)=\sum_{v\in V(F)}{\bm b}(v) \ (F\subseteq E)$ is monotone and submodular (see e.g.,~\cite{Frank2011}). 
We now have $f_{\calM,c}(F)=\sum_{v\in V(F)} (c-|\bR_v|)-c+r_{\calM}(\bR_F)$.
Since $c-|\bR_v|\geq 0$ by (C1) and $c\geq r_{\calM}$, $f_{\calM,c}(F)$ is monotone and submodular. 
\end{proof}
Thus, if (C1) is satisfied and $c\geq r_{\calM}$, $f_{\calM,c}$ induces a matroid on $E$, which is denoted by $\calN(f_{\calM,c})$.
Note that $(G,\bR)$ satisfies (C2) with respect to ${\cal M}$ if and only if $E$ is independent in $\calN(f_{\calM,k})$. 

To show the sufficiency, we begin with an easy observation.
$(G,\bR)$ is called {\em disconnected} if $G$ is not connected.
A {\em connected component} $(G',\bR')$ is a subgraph of $(G,\bR)$, where $G'=(V',E')$ is a connected component of $G$ and $\bR'=\bR_{E'}$. 
\begin{lemma}
\label{lem:connected}
Let $(G,\bR)$ be a disconnected graph with roots, and $\calM$ be a matroid on $\bR$ of rank $k$.
Suppose (C1), (C2) and (C3) are satisfied.
Then, for each connected component $(G',\bR')$ of $(G,\bR)$, 
$\bR'$ is a spanning set of ${\cal M}$, and $(G',\bR')$ satisfies (C1), (C2) and (C3) with respect to $\calM|\bR'$.
\end{lemma}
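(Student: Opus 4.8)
The plan is a direct counting argument exploiting the tightness of (C3). Let the connected components of $(G,\bR)$ be $(G_1,\bR_1),\dots,(G_p,\bR_p)$ with $G_j=(V_j,E_j)$ and $\bR_j$ the multiset of roots of $\bR$ lying in $V_j$ (which coincides with $\bR_{E_j}$ whenever $E_j\neq\emptyset$); since $G$ is disconnected, $p\ge 2$. Then $V_1,\dots,V_p$ partition $V$, each root lies in exactly one $V_j$ so $\bR_1,\dots,\bR_p$ partition $\bR$, and $V(E_j)=V_j$ whenever $E_j\neq\emptyset$ (every vertex of a connected graph on at least two vertices is incident to an edge).

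First I would bound each component's contribution to $|E|+|\bR|$. If $E_j\neq\emptyset$, applying (C2) with $F=E_j$ and using $r_{\calM}(\bR_j)\le k$ gives
\[
 |E_j|+|\bR_j|\ \le\ k|V_j|-k+r_{\calM}(\bR_j)\ \le\ k|V_j|,
\]
with equality throughout if and only if $r_{\calM}(\bR_j)=k$, i.e.\ $\bR_j$ is a spanning set of $\calM$. If $E_j=\emptyset$, then $V_j=\{v\}$ is a single vertex with $\bR_j=\bR_v$, and (C1) gives $|\bR_j|=r_{\calM}(\bR_j)\le k=k|V_j|$, again with equality exactly when $\bR_j$ spans $\calM$. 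Summing over $j$ and using that the $V_j$ (resp.\ the $\bR_j$) partition $V$ (resp.\ $\bR$),
\[
 |E|+|\bR|=\sum_{j=1}^{p}\bigl(|E_j|+|\bR_j|\bigr)\ \le\ k\sum_{j=1}^{p}|V_j|=k|V|.
\]
By (C3) the two extremes are equal, hence every per-component inequality above is an equality. In particular $\bR_j$ is a spanning set of $\calM$ for each $j$ (the first assertion), and moreover $|E_j|+|\bR_j|=k|V_j|$ for each $j$.

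It then remains to verify (C1), (C2) and (C3) for $(G_j,\bR_j)$ with respect to $\calM|\bR_j$. The crucial observation is that, since $\bR_j$ spans $\calM$, the restriction $\calM|\bR_j$ again has rank $k$. Then (C1) for the sub-instance holds because for $v\in V_j$ the multiset $\bR_v\subseteq\bR_j$ is independent in $\calM$, hence in $\calM|\bR_j$. For (C2), take $\emptyset\neq F\subseteq E_j$: the set $V(F)$ is unchanged, the rank of $\calM|\bR_j$ is the same $k$, and $r_{\calM|\bR_j}(\bR_F)=r_{\calM}(\bR_F)$ since $\bR_F\subseteq\bR_j$, so the desired inequality is literally an instance of (C2) for $(G,\bR,\calM)$. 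Finally (C3) for the sub-instance reads $|E_j|+|\bR_j|=k|V_j|$, which we established above.

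I do not expect a genuine obstacle: once the (C2)-inequalities are summed over the components, (C3) forces every inequality to be tight, and the rest is routine. The only points requiring a little care are the bookkeeping that vertex sets and root multisets split cleanly over the components (including degenerate single-vertex components, which must be handled via (C1) rather than (C2)), and noticing that "$\bR_j$ spanning" is precisely what makes $\calM|\bR_j$ have rank $k$, so that the equality $|E_j|+|\bR_j|=k|V_j|$ is indeed the statement of (C3) for the sub-instance and not a weaker count.
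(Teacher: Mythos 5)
Your proof is correct and follows essentially the same route as the paper's: apply (C2) to the edge sets of the components, sum, and use the exact count (C3) to force every inequality to be tight, which simultaneously yields $r_{\calM}(\bR_j)=k$ and the per-component count (C3); the verification of (C1) and (C2) for the restriction $\calM|\bR_j$ is then immediate because $\bR_j$ spanning makes the restricted rank equal to $k$. The only (harmless) difference is that you sum over all components at once and treat isolated-vertex components separately via (C1), whereas the paper splits into $E'$ and $E\setminus E'$; your handling of the degenerate single-vertex components is if anything slightly more careful.
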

\begin{proof}
Let $(G'=(V',E'),\bR')$ be a connected component. Clearly, $(G',\bR')$ satisfies (C1).

From (C2) of $(G,\bR)$, we have $|E'|\leq f_{\calM,k}(E')$ and $|E\setminus E'|\leq f_{\calM,k}(E\setminus E')$.
From (C3), $k|V|=|E|+|\bR|$. Also, $r_{\calM}(\bR_F)\leq k$ for $F\subseteq E$ since $k$ is the rank of ${\cal M}$.
Combining these relations, 
we have $k|V|=|E|+|\bR|=|E\setminus E'|+|E'|+|\bR\setminus \bR'|+|\bR'|\leq f_{\calM,k}(E\setminus E')+f_{\calM,k}(E')+|\bR\setminus \bR'|+|\bR'|=k|V|-2k+r_{\calM}(\bR_{E\setminus E'})+r_{\calM}(\bR_{E'})\leq k|V|$.
In other words, the equality holds in each inequality, and in particular we have  $|E'|=f_{\calM,k}(E')$ and $r_{\calM}(\bR')=k$.
This implies the first part of the claim.
Note $|E'|=f_{\calM,k}(E')=k|V(E')|-k-|\bR_{E'}|+r_{\calM}(\bR')=k|V'|-|\bR'|$. This implies (C3) of $(G',\bR')$.
Also, for any $F\subseteq E'$, we have $|F|\leq f_{\calM,k}(F)=f_{\calM\mid \bR',k}(F)$, implying (C2) of $(G',\bR')$.
\end{proof}

Let us move to the proof of the sufficiency of our main theorem.
\begin{proof}[Proof of the sufficiency of Theorem~\ref{thm:partition}]
The proof is done by induction on $|E|$.
Note that, if $E=\emptyset$, Theorem~\ref{thm:partition} trivially follows from (C1) and (C3), and hence we shall consider the case $|E|>0$.
If $G$ is disconnected, we can consider each connected component separately by Lemma~\ref{lem:connected}.
We thus assume that $G$ is connected.

For $F\subseteq E$, $(G[F],\bR_F)$ denotes the subgraph {\em edge-induced} by $F$. Namely, $G[F]=(V(F),F)$. 
A non-empty $F\subseteq E$ is said to be {\em tight} if $|F|=f_{\calM,k}(F)$.
A tight set $F$ is called {\em proper} if $V(F)\neq V$.
We begin with investigating properties of proper tight sets.
\begin{claim}
\label{claim:consequence}
Suppose $G$ has a proper tight set $F$.
Let $s=r_{\calM}(\bR_F)$.
Then there is an $F'\subseteq F$ satisfying the following two properties: 
\begin{description}
\item[(i)] $(G[F]-F',\bR_F)$ admits a basic rooted-tree decomposition with respect to ${\calM}|\bR_F$,
\item[(ii)] $F'$ can be partitioned into  $k-s$ edge-disjoint spanning trees on $V(F)$.
\end{description}
\end{claim}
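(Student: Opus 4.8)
The plan is to obtain the required decomposition of $G[F]$ by applying the induction hypothesis of Theorem~\ref{thm:partition} to a suitably modified, strictly smaller instance supported on $(G[F],\bR_F)$. Note first that, since $G$ is connected with $E\neq\emptyset$, we have $V(E)=V$, so $V(F)\neq V$ forces $F\subsetneq E$ and hence $|F|<|E|$. The obstruction to applying induction directly is condition (C3): tightness of $F$ gives $|F|=f_{\calM,k}(F)=k(|V(F)|-1)-(|\bR_F|-s)$, i.e. $|F|+|\bR_F|=k|V(F)|-k+s$, whereas (C3) with respect to $\calM|\bR_F$ (a matroid of rank $s$) would require $|F|+|\bR_F|=s|V(F)|$. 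The discrepancy is exactly $(k-s)(|V(F)|-1)$ — the edge count of $k-s$ spanning trees on $V(F)$ — which suggests enlarging the root-matroid by $k-s$ coloops.

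Concretely, I would fix an arbitrary $w\in V(F)$, adjoin $k-s$ new copies $e_1,\dots,e_{k-s}$ of the vertex $w$ to the root multiset, set $\bR_F^{+}=\bR_F\cup\{e_1,\dots,e_{k-s}\}$, and let $\calM_F^{+}$ be the direct sum of $\calM|\bR_F$ with a free matroid on $\{e_1,\dots,e_{k-s}\}$, so that $\calM_F^{+}$ has rank $k$ and each $e_j$ is a coloop. I would then verify that $(G[F],\bR_F^{+})$ satisfies (C1), (C2), (C3) with respect to $\calM_F^{+}$. Condition (C3) is the tightness identity above together with $|\bR_F^{+}|=|\bR_F|+(k-s)$. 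For (C1): at a vertex $v\neq w$ we have $\bR_v^{+}=\bR_v$, which is independent in $\calM$; at $v=w$, $\bR_w^{+}=\bR_w\cup\{e_1,\dots,e_{k-s}\}$ is independent in the direct sum because $\bR_w$ is independent in $\calM$ by (C1) and the $e_j$ are coloops. For (C2), take a non-empty $F''\subseteq F$: if $w\notin V(F'')$ then $\bR_{F''}^{+}=\bR_{F''}$ and $r_{\calM_F^{+}}(\bR_{F''}^{+})=r_{\calM}(\bR_{F''})$, so the inequality is literally (C2) of $(G,\bR)$; if $w\in V(F'')$ the left side gains $k-s$ from the new roots and the right side gains $k-s$ from the rank term (the $e_j$ being coloops), so again it reduces to (C2) of $(G,\bR)$.

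Since $|F|<|E|$, the induction hypothesis applies to $(G[F],\bR_F^{+})$ with $\calM_F^{+}$ and yields a basic rooted-tree decomposition consisting of rooted trees $(T_i,r_i)$ for $r_i\in\bR_F$ together with $(S_j,e_j)$ for $j=1,\dots,k-s$; here $|V(F)|\geq 2$, since otherwise $f_{\calM,k}(F)\leq 0$, contradicting that the tight set $F$ is non-empty. Every base of the direct sum $\calM_F^{+}$ contains all the coloops $e_1,\dots,e_{k-s}$ and meets $\bR_F$ in a base of $\calM|\bR_F$; hence the basic property forces $v\in V(S_j,e_j)$ for every $v\in V(F)$ and every $j$, so each $S_j$ is a spanning tree of $G[F]$, and it forces $\{r_i\mid v\in V(T_i,r_i)\}$ to be a base of $\calM|\bR_F$ for every $v\in V(F)$. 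Setting $F'=S_1\cup\dots\cup S_{k-s}$ then gives property (ii), while $\{(T_i,r_i)\}_{r_i\in\bR_F}$ is a basic rooted-tree decomposition of $(G[F]-F',\bR_F)$ with respect to $\calM|\bR_F$, which is property (i).

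The step requiring the most care is the construction above and the verification that the augmented instance meets (C1)--(C3), in particular the bookkeeping at the distinguished vertex $w$ in (C1) and in the two cases of (C2), together with the observation $|F|<|E|$ that legitimizes the induction. Beyond this I do not anticipate a genuine obstacle in the claim; the substantive role of proper tight sets comes later, when $G[F]$ is contracted and the decomposition supplied by this claim is recombined with a decomposition of the smaller graph.
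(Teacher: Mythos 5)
Your proposal is correct and follows essentially the same route as the paper: adjoin $k-s$ coloop roots at a chosen vertex of $V(F)$, check that the augmented instance $(G[F],\bR_F^{+})$ satisfies (C1)--(C3) with respect to the rank-$k$ matroid (the paper phrases your two-case verification of (C2) as the single identity $f_{\calM,k}=f_{\calM',k}$, since adding coloops changes $|\bR_{F''}|$ and $r(\bR_{F''})$ by the same amount), invoke the induction hypothesis on the smaller edge set, and observe that the trees rooted at the coloops must span all of $V(F)$. No gaps.
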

Figure~\ref{fig:F} shows an example for a proper tight set $F$ in the graph illustrated in Figure~\ref{fig:example}.
\begin{proof}
Take a vertex $v'\in V(F)$.
By (C1), we have $|\bR_{v'}|=r_{\calM}(\bR_{v'})\leq s$.
We insert $(k-s)$ copies of $v'$ into $\bR$ as new roots (if $s<k$), and let $\bR'$ be the resulting multiset.
A new matroid ${\cal M'}$ on $\bR'$ is constructed based on $\calM$ by adding these copies as coloops.
Namely, $r_{\calM'}(\bR'_F)=r_{\calM}(\bR_F)+(k-s)=k$.
We now show 
\begin{equation}
\label{claim:consequence1}
\text{$(G[F],\bR'_F)$ satisfies (C1)(C2)(C3) with respect to $\calM'|\bR'_F$}.
\end{equation}
Clearly (C1) is satisfied.
Since each element of $\bR'\setminus \bR$ is inserted as a coloop in ${\cal M}'$, 
we have $f_{{\calM,k}}=f_{{\calM',k}}$ and thus ${\cal N}(f_{\calM,k})={\cal N}(f_{\calM',k})$.
By the independence of $F$ in ${\cal N}(f_{\calM,k})$, $F$ is also independent in ${\cal N}(f_{\calM',k})$, implying (C2).
Furthermore, since $|F|=f_{\calM,k}(F)=k|V(F)|-k-|\bR_F|+s=k|V(F)|-|\bR'_F|$,
(C3) is satisfied.

Thus, by induction on the size of edge set, $(G[F],\bR'_F)$ admits a basic rooted-tree decomposition $\{(T_1,r_1),\dots, (T_{t'},r_{t'})\}$, where $\bR'_F=\{r_1,\dots, r_{t'}\}$.
Without loss of generality, let $(T_1,r_1),\dots, (T_{k-s},r_{k-s})$ be the rooted-trees among them whose roots belong to $\bR'\setminus \bR$.
Since $\{r_i\in \bR'_F \mid v\in V(T_i,r_i)\}$ is a base of $\calM'|\bR'_F$ for each $v\in V(F)$, every vertex $v$ of $V(F)$ must be spanned by $(T_i,r_i)$ for all $i=1,\dots,k-s$.
Let $F'=\bigcup_{i=1}^{k-s} T_i$. Then $F'$ has the desired property.
\end{proof}

\begin{figure}[t]
\centering
\begin{minipage}{0.5\textwidth}
\centering
\includegraphics[scale=1]{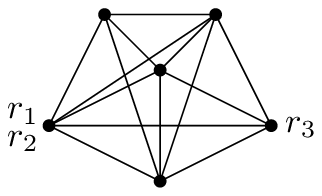}
\par
(a)
\end{minipage}

\begin{minipage}{0.98\textwidth}
\centering
\begin{minipage}{0.23\textwidth}
\centering
\includegraphics[scale=1]{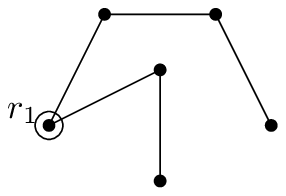}
\par 
$(F_1,r_1)$
\end{minipage}
\begin{minipage}{0.23\textwidth}
\centering
\includegraphics[scale=1]{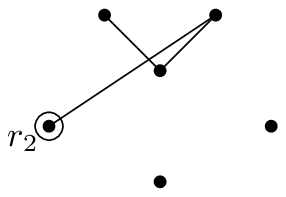}
\par 
$(F_2,r_2)$
\end{minipage}
\begin{minipage}{0.23\textwidth}
\centering
\includegraphics[scale=1]{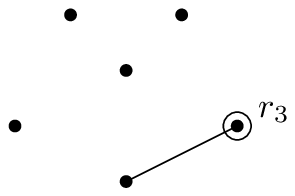}
\par
$(F_3,r_3)$
\end{minipage}
\begin{minipage}{0.23\textwidth}
\centering
\includegraphics[scale=1]{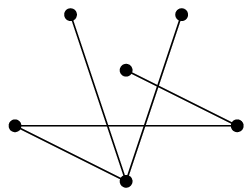}
\par
$F'$
\end{minipage}

\vspace{0.5\baselineskip}
(b)
\end{minipage}
\caption{(a) An unbalanced proper tight set $F$ of $(G,\bR)$ shown in Figure~\ref{fig:example}, 
where $\bR_F=\{r_1,r_2,r_3\}$ and $s=r_{\calM}(\bR_F)=2$.
(b) A spanning tree $F'$ on $V(F)$ and a basic rooted-tree decomposition of $(G[F]-F',\bR_F)$.}
\label{fig:F}
\end{figure}

A tight set $F$ is called {\em unbalanced} if there is a vertex $v\in V(F)$ satisfying ${\rm sp}_{\calM}(\bR_v)\neq {\rm sp}_{\calM}(\bR_F)$; Otherwise $F$ is called {\em balanced}.
A proper tight set given in Figure~\ref{fig:F} is an example of unbalanced one. 
We now consider the case where $(G,\bR)$ has an unbalanced proper tight set.
\begin{claim}
\label{claim:tight}
Suppose $G$ has an unbalanced proper tight set $F$.
Then, $(G,\bR)$ admits a basic rooted-tree decomposition.
\end{claim}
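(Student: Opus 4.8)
The plan is to use the unbalanced proper tight set $F$ to split the problem into two strictly smaller instances, apply induction to each, and then glue the resulting decompositions. First I would pick a vertex $v\in V(F)$ witnessing unbalancedness, i.e. $\mathrm{sp}_{\calM}(\bR_v)\neq \mathrm{sp}_{\calM}(\bR_F)$, and apply Claim~\ref{claim:consequence} to $F$ to obtain $F'\subseteq F$ such that $(G[F]-F',\bR_F)$ has a basic decomposition with respect to $\calM|\bR_F$ and $F'$ is the edge-disjoint union of $k-s$ spanning trees on $V(F)$, where $s=r_{\calM}(\bR_F)$. The rooted-trees from $(G[F]-F',\bR_F)$ together with the $k-s$ spanning trees of $F'$ (which we orient and think of as rooted-trees with roots the copies of $v'$) give a candidate family of edge-disjoint rooted-trees covering $F$; the point of the $F'$ construction is precisely that every vertex of $V(F)$ ends up spanned by exactly $k$ of these trees, with the roots forming a base of $\calM$ (the $k-s$ copies being coloops in $\calM'$ exactly match the missing rank).

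Next I would handle the ``outside'' part. The natural move is to contract $V(F)$ to a single vertex $w$: form $G/F := (G - F)$ with $V(F)$ identified to $w$, and put on $w$ the roots $\bR_F$ (or rather a spanning-rank-$s$ sub-multiset of them, chosen to reflect $\mathrm{sp}_{\calM}(\bR_F)$). One then checks that $(G/F, \bR')$ — where $\bR' = (\bR\setminus\bR_F)\cup(\text{roots placed at }w)$ — satisfies (C1)(C2)(C3) with respect to the appropriate matroid on $\bR'$ (essentially $\calM$ with $\bR_F$ replaced by an independent set of size $s$ spanning the same flat, or a suitable quotient). Condition (C3) for $G/F$ follows by subtracting the tight equality $|F|=f_{\calM,k}(F)$ from (C3) of $(G,\bR)$; condition (C2) for a set $F_1\subseteq E\setminus F$ follows from (C2) of $(G,\bR)$ applied to $F_1\cup F$ using tightness of $F$ and submodularity of $f_{\calM,k}$; and (C1) at $w$ is where unbalancedness is used — we need $\bR_w$ (the roots we placed at $w$) to be independent, and more importantly we need, at every vertex $u$ adjacent to $w$ in the quotient, that combining $\bR_u$ with edges into $w$ still admits a base; the hypothesis $\mathrm{sp}_{\calM}(\bR_v)\neq\mathrm{sp}_{\calM}(\bR_F)$ for some $v\in V(F)$ guarantees the reduced instance is genuinely smaller and consistent (in particular $F\neq E$ so $|E\setminus F|<|E|$, and the quotient is not degenerate). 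By induction $(G/F,\bR')$ admits a basic decomposition.

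Finally I would reassemble. Each rooted-tree of the $(G/F,\bR')$-decomposition either avoids $w$ — in which case it is already a rooted-tree in $G$ — or passes through $w$; for the latter, $w$ is incident in that tree to some edges of $E\setminus F$ that in $G$ attach to actual vertices of $V(F)$, so the tree ``enters'' $V(F)$ at one or more points. Here is the crux: I must re-root / merge these fragments with the trees of the inside decomposition so that every vertex of $V$ is still spanned exactly $k$ times with base-forming roots, and no edge is used twice. The clean way is to match up the $t'$ inside-trees (those from Claim~\ref{claim:consequence}, with root-set a base of $\calM|\bR_F$ of size $k$, i.e. $s$ genuine roots plus $k-s$ copies of $v'$) with the outside-trees according to which ``slot'' of the base at $w$ they occupy: an outside-tree whose root lies in $\mathrm{sp}$ of a particular inside root gets concatenated to that inside-tree along a connecting edge, and the $k-s$ outside-trees that (at $w$) correspond to the coloop copies get concatenated to the $k-s$ spanning trees of $F'$. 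Acyclicity of each merged tree is automatic since we glue a tree on $V\setminus(V(F)\setminus\{\text{attach pt}\})$ to a tree on $V(F)$ along a single edge; edge-disjointness is clear since $F$ and $E\setminus F$ are disjoint; and the base condition at each vertex is preserved by construction, using that the base at $w$ in the outside matroid ``expands'' to a base of $\calM$ when we re-insert $\bR_F$. The main obstacle I expect is precisely this bookkeeping of which outside-tree gets glued to which inside-tree so that the multiset of roots spanning each vertex of $V(F)$ remains a base of $\calM$ — this requires that the base of $\calM|\bR_F$ used inside and the base of the quotient matroid used at $w$ outside are ``compatible'' in the sense of extending to a common base of $\calM$, which is where the choice of the rank-$s$ representative of $\bR_F$ at $w$ and the coloop trick in Claim~\ref{claim:consequence} must be made to line up.
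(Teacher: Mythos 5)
Your overall strategy (reduce via the tight set, recurse, glue) parallels the paper's, but your reduction is genuinely different: you contract $V(F)$ to a single vertex $w$ and delete all of $F$, whereas the paper keeps every vertex of $V(F)$ and every edge of $F'$ in the reduced instance, deleting only $F\setminus F'$ and planting, at each $v\in V(F)$, $s$ new roots parallel in the matroid to the roots $r_i$ whose trees $F_i$ span $v$. That difference is exactly where your argument has a genuine gap, in the gluing step. In a basic decomposition of the contracted instance, a rooted-tree may meet $w$ with degree two or more; upon uncontraction it becomes a forest whose components attach to $V(F)$ at distinct vertices $x_1,x_2,\dots$. For the $k-s$ trees whose roots are independent modulo ${\rm sp}_{\calM}(\bR_F)$ this is harmless, since you glue them to spanning trees of $F'$, which contain all attachment points. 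But the $s$ trees rooted at $w$ itself (the relocated representatives of $\bR_F$) must be glued to the trees $F_j$ of the inside decomposition from Claim~\ref{claim:consequence}, and $F_j$ spans only $V(F_j,r_j)$, which need not contain all of $x_1,x_2,\dots$ (indeed $F_j$ may be empty); the glued object is then disconnected, and every edge of $F$ is already in use, so nothing is left to reconnect it. Nothing in the contracted instance forbids this configuration, so the ``bookkeeping'' you flag at the end is not mere bookkeeping: the contraction discards precisely the information --- at which vertex of $V(F)$ each outside edge attaches --- that is needed to match outside-trees to inside-trees consistently.

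The paper's reduction is engineered to carry that information through the matroid: since every $u\in V(F)$ receives new roots spanning all of ${\rm sp}_{\calM^*}(\bR_F)$ (see (\ref{eq:suf1_span1})), basicness of the inductive decomposition forces each tree rooted at a new root $r_v^i$ to meet $V(F)$ only at $v$ (see (\ref{eq:suf21})), so each such tree hangs off $F_i$ at a single vertex and $F_i\cup\bigcup_v T_v^i$ is automatically a tree; meanwhile trees rooted outside $V(F)$ may legitimately pass through $V(F)$ because $F'$ is retained in the reduced graph. To salvage a contraction-style argument you would need to build an analogous per-attachment-point control into the quotient, which essentially reproduces the paper's construction.
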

\begin{proof}
Let $t'=|\bR_F|$ and $s=r_{\calM}(\bR_F)$. Without loss of generality, we denote $\bR_F=\{r_1,\dots, r_{t'}\}\subset \bR=\{r_1,\dots, r_{t'},r_{t'+1},\dots, r_t\}$.
By Claim~\ref{claim:consequence}, $F$ can be  partitioned into $\{F_1,\dots, F_{t'}, F'\}$
such that $(F_1,r_1),\dots, (F_{t'},r_{t'})$ are  basic edge-disjoint rooted-trees with respect to $\calM|\bR_F$ and  $F'$ is the union of edge-disjoint $(k-s)$ spanning trees on $V(F)$.
(See Figure~\ref{fig:F} for an example.)
Then, we have
\begin{equation}
\label{eq:card1}
|F\setminus F'|=f_{{\cal M}|\bR_F,s}(F\setminus F')=s|V(F)|-|\bR_F|.
\end{equation}
Note also
\begin{equation}
\label{eq:F_F'}
F\setminus F'\neq \emptyset;
\end{equation}
otherwise, $F_i=\emptyset$ for all $i=1,\dots, t'$; as the decomposition $\{(F_1,r_1),\dots, (F_{t'},r_{t'})\}$ is basic with respect to ${\cal M}|\bR_F$,
we have $|\bR_v|=r_{\cal M}(\bR_F)=s$; thus, $F$ becomes balanced, a contradiction.
%

Based on $\{F_1,\dots, F_{t'}, F'\}$, we now construct a new graph $(G'=(V,E'),\bR')$ with roots in the following way:
\begin{itemize}
\item Remove $F\setminus F'$ from $G$ and remove $\bR_F$ from $\bR$;
\item For each $i=1,\dots, t'$ and for each $v\in V(F_i,r_i)$, insert a copy of $v$ into $\bR\setminus \bR_F$ as a new root.
This copy is denoted by $r_v^i$.
\end{itemize}
In total  we inserted $s$ copies of each $v\in V(F)$ into $\bR\setminus \bR_F$ as new roots,
since there are exactly $s$ rooted-trees among $\{(F_1,r_1),\dots, (F_{t'},r_{t'})\}$ that span $v\in V(F)$.
An example is given in Figure~\ref{fig:G'}(a). 
We denote the multiset of these new roots by ${\bm S}$ 
(i.e., ${\bm S}=\{r_v^i \mid v\in V(F_i,r_i), 1\leq i\leq t'\}$).
We have thus constructed a new graph $(G'=(V,E'),\bR')$ with $E'=E\setminus (F\setminus F')$ and $\bR'=\bR\setminus \bR_F\cup {\bm S}$. 
From (\ref{eq:card1}) and the construction, we have
\begin{equation}
\label{eq:S}
|\bR_F|+|F\setminus F'|=s|V(F)|=|\bS|.
\end{equation}

A new matroid ${\calM}'$ on $\bR'$ is constructed from ${\calM}$ as follows.
 For each $i$ with $1\leq i\leq t'$ and for each $v\in V(F_i,r_i)$, we insert $r_v^i$ into ${\calM}$ so that $r_v^i$ is parallel to $r_{i}\in \bR$ (in the sense of matroids).
 We then obtained a matroid ${\calM}^*$ on the multiset $\bR\cup {\bm S}$.
After removing all elements of $\bR_F$, a matroid, $\calM'=\calM^*\setminus \bR_F$, on $\bR'$ is defined.
(See Figure~\ref{fig:G'}(b).)
From the construction we have,  for each $v\in V(F)$,
\begin{equation}
\label{eq:suf1_span1}
{\rm sp}_{\calM^*}(\bR'_v)={\rm sp}_{\calM^*}({\bm S}_v)={\rm sp}_{\calM^*}(\bR_F).
\end{equation}
We now claim the following:
\begin{equation}
\label{eq:claim:1}
\text{$(G',\bR')$ satisfies (C1) (C2) (C3) with respect to $\calM'$.}
\end{equation}
Assuming (\ref{eq:claim:1}) for a while, let us show how to construct a basic decomposition of $(G,\bR)$.
By (\ref{eq:F_F'}), $|E'|<|E|$ holds, and hence we can apply the inductive hypothesis to $(G',\bR')$.
Namely, $(G',\bR')$ admits a basic rooted-tree decomposition by induction (see Figure~\ref{fig:G'}(c)).
Recall that $\bR'$ consists of $\bR\setminus \bR_F=\{r_{t'+1},\dots, r_{t}\}$ and 
$\bS=\{r_v^i\mid v\in V(F_i,r_i),  1\leq i\leq t'\}$.
It is thus convenient to denote the corresponding rooted-trees of the decomposition by $(T_{t'+1},r_{t'+1}),\dots, (T_{t},r_t)$ and  $\{(T_v^i,r_v^i)\mid v\in V(F_i,r_i),1\leq i\leq t'\}$.
(So $\{T_{t'+1},\dots, T_t\}\cup \{T_v^i\mid v\in V(F_i,r_i),1\leq i \leq t'\}$ is a partition of $E'$ 
into edge-disjoint trees.) 
Note that, for any $u,v\in V(F)$ and any $r_v^i\in \bR'_v$, $r_v^i\in {\rm sp}_{\calM^*}(\bR'_u)$ holds by (\ref{eq:suf1_span1}).
This implies  that $(T_v^i,r_v^i)$ cannot span $u$ from the basicness; in other words,
\begin{equation}
\label{eq:suf21}
V(T_v^i,r_v^i)\cap V(F)=\{v\} \qquad \text{ for every $r_v^i\in \bS$}.
\end{equation}
 
We are now ready to construct a basic rooted-tree decomposition $\{(T_1^*, r_1),\dots, (T_{t}^*,r_{t})\}$ of $(G,\bR)$ with respect to $\calM$.
For each $1\leq i\leq t$, we define $T^*_i$ by
\begin{equation}
\label{eq:def_T^*}
T^*_i=
\begin{cases}
F_i\cup \left(\bigcup_{v\in V(F_i,r_i)} T_v^i \right) & \text{ for }1\leq i\leq t' \\
T_i & \text{ for } t'+1\leq i\leq t.
\end{cases}
\end{equation}
Clearly, $T^*_i$ is connected with $r_i\in V(T_i^*)$ (if $T_i^*\neq \emptyset$).
By (\ref{eq:suf21}), $T^*_i$ has no cycle,  and thus $(T_i^*,r_i)$ is a rooted-tree.
Also, it is not difficult to see that each vertex $v$ is spanned by $k$ rooted-trees since there are exactly $k$ indices ``$i$'' for which $T_v^i$ or $T_i$ span $v$.
We now check that this decomposition is indeed basic.

Consider $v\in V$, and suppose $v\in V(T_u^i,r_u^i)$ for some $r_u^i\in\bS$.
From the construction of $\bS$ there is $r_i\in \bR_F$ such that $u\in V(F_i,r_i)$;
hence  we obtain $v\in V(T_i^*,r_i)$ from definition (\ref{eq:def_T^*}).
Namely, $v\in V(T_u^i,r_u^i)$ implies $v\in V(T_i^*,r_i)$.
Since $r_u^i$ is parallel to $r_i$ in ${\cal M}^*$, this implies
\begin{align*}
{\rm sp}_{\calM^*}(\{r_u^i\in \bS \mid v\in V(T_u^i,r_u^i)\})\subseteq {\rm sp}_{\calM^*}(\{r_i\in \bR_F \mid v\in V(T_i^*,r_i)\}.
\end{align*}
for each $v\in V$. 
Also, for each $v\in V$,
\begin{align*}
\{r_i\in \bR\setminus \bR_F \mid v\in V(T_i^*,r_i)\} &=
\{r_i\in \bR\setminus \bR_F \mid v\in V(T_i,r_i)\}, 
\end{align*}
from definition (\ref{eq:def_T^*}).
We thus obtain, for each $v\in V$, 
\begin{equation}
\label{eq:suf22}
\begin{split}
&r_{\calM^*}(\{r_i\in \bR \mid v\in V(T_i^*,r_i)\})  \\
&\geq r_{\calM^*}(\{r_i\in \bR\setminus \bR_F \mid v\in V(T_i,r_i)\}\cup \{r_u^i\in \bS \mid v\in V(T_u^i,r_u^i)\})=k.
\end{split}
\end{equation}
Since  each vertex is spanned by $k$ rooted-trees among $\{(T_i^*,r_i)\mid r_i\in \bR\}$,
(\ref{eq:suf22}) implies that  $\{(T_i^*,r_i)\mid  r_i\in \bR \}$ is basic.
We thus obtained a basic rooted-tree decomposition of $(G,\bR)$.

\begin{figure}[t]
\centering
\begin{minipage}{0.4\textwidth}
\centering
\includegraphics[scale=1]{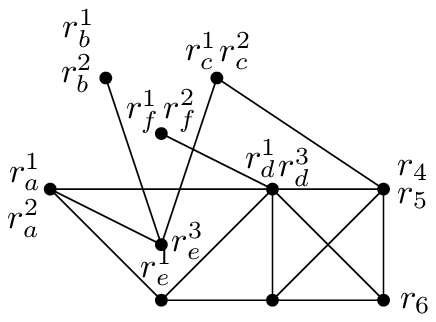}
\par
(a)
\end{minipage}
\begin{minipage}{0.4\textwidth}
\centering
\includegraphics[scale=1]{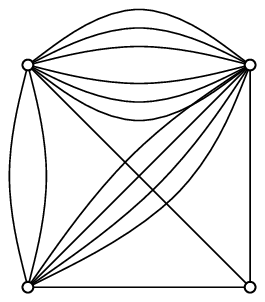}
\par
(b)
\end{minipage}

\vspace{\baselineskip}

\begin{minipage}{0.98\textwidth}
\centering
\begin{minipage}{0.32\textwidth}
\centering
\includegraphics[scale=1]{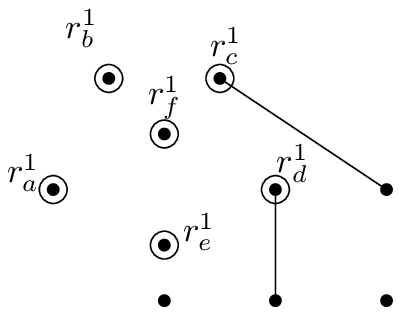}
\par 
$\{(T_v^1,r_v^1)\mid v\in V(F_1,r_1)\}$
\end{minipage}
\begin{minipage}{0.32\textwidth}
\centering
\includegraphics[scale=1]{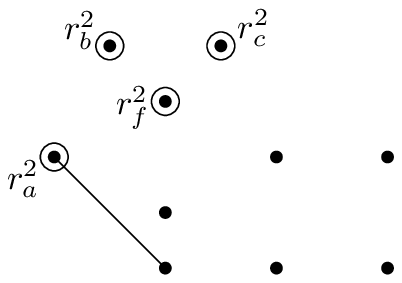}
\par 
$\{(T_v^2,r_v^2)\mid v\in V(F_2,r_2)\}$
\end{minipage}
\begin{minipage}{0.32\textwidth}
\centering
\includegraphics[scale=1]{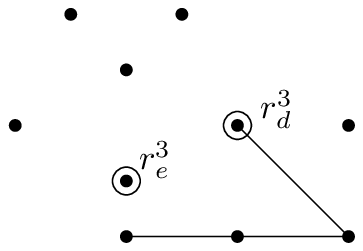}
\par
$\{(T_v^3,r_v^3)\mid v\in V(F_3,r_3)\}$
\end{minipage}

\vspace{\baselineskip}

\begin{minipage}{0.32\textwidth}
\centering
\includegraphics[scale=1]{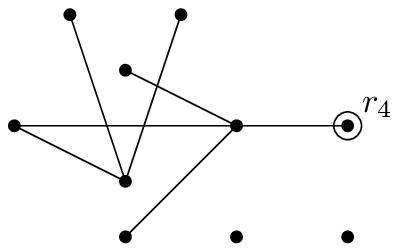}
\par
$(T_4,r_4)$
\end{minipage}
\begin{minipage}{0.32\textwidth}
\centering
\includegraphics[scale=1]{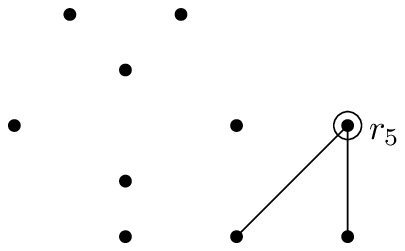}
\par
$(T_5,r_5)$
\end{minipage}
\begin{minipage}{0.32\textwidth}
\centering
\includegraphics[scale=1]{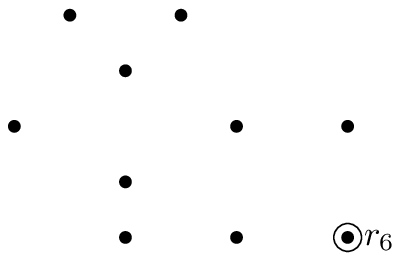}
\par
$(T_6,r_6)$
\end{minipage}

\vspace{0.5\baselineskip}
(c)
\end{minipage}
\caption{(a) $(G',\bR')$ obtained from $(G,\bR)$ given in Figure~\ref{fig:example}. (b) A graph representing $\calM'$. 
 (c) A basic rooted-tree decomposition of $(G',\bR')$.}
\label{fig:G'}
\end{figure}

The remaining thing is thus to prove (\ref{eq:claim:1}).
Clearly, (C1) is satisfied.
To see (C3), note $k=r_{\calM'}$ by (\ref{eq:suf1_span1}).
Also, by using (\ref{eq:S}), we obtain $|\bR'|=|\bR|-|\bR_F|+|\bS|=|\bR|+|F\setminus F'|$.
This yields $|E'|+|\bR'|=(|E|-|F\setminus F'|)+(|\bR|+|F\setminus F'|)=|E|+|\bR|=k|V|$, implying (C3).

To see (C2), suppose for a contradiction that there is $C$ with $C\subseteq E'$ that violates (C2).
Namely, $|C|\geq f_{{\calM'},k}(C)+1$.
Since $C$ satisfies $|C|\leq f_{{\calM},k}(C)$, we must have
\begin{equation}
\label{eq:suf11}
V(C)\cap V(F)\neq\emptyset.
\end{equation}
Also, since $C\subseteq E'=E\setminus (F\setminus F')$ and $F'$ is the union of edge-disjoint $(k-s)$ spanning trees with $F'\subseteq F$,
$C\cap F$ can be partitioned into $(k-s)$ edge-disjoint forests, which implies
\begin{equation}
\label{eq:suf12}
|C\cap F|\leq (k-s)(|V(C\cap F)|-1)
\end{equation}
if $C\cap F\neq \emptyset$.

By  (\ref{eq:suf11}) we have $\bS_C\neq \emptyset$, and 
hence ${\rm sp}_{\calM^*}({\bm S}_{C})= {\rm sp}_{\calM^*}(\bR_F)$ by (\ref{eq:suf1_span1}). 
As $\bR'=(\bR\setminus \bR_F)\cup \bS$, this yields 
${\rm sp}_{\calM^*}(\bR'_{C})={\rm sp}_{\calM^*}((\bR_C\setminus \bR_F)\cup \bS_C)={\rm sp}_{\calM^*}(\bR_C\cup \bR_F)={\rm sp}_{\calM^*}(\bR_{C\cup F})$.
Therefore, by $r_{{\cal M}'}(\bR'_C)=r_{\calM^*}(\bR'_C)$ and $r_{\calM}(\bR_{C\cup F})=r_{\calM^*}(\bR_{C\cup F})$, we obtain
\begin{equation}
\label{eq:suf13}
r_{\calM'}(\bR'_{C})=r_{\calM}(\bR_{C\cup F}).
\end{equation}
We also need one more relation:
\begin{equation}
\label{eq:suf14}
|\bR_{C\cup F}|+s|V(C)\cap V(F)|=|\bR'_{C}|+|\bR_F|,
\end{equation}
which can be obtained as follows:
\begin{align*}
|\bR_{C\cup F}|=\sum_{v\in V(C\cup F)}|\bR_v|
&=\sum_{v\in V(C)\setminus V(F)} |\bR_v|+\sum_{v\in V(F)} |\bR_v| \\
&=\sum_{v\in V(C)}|\bR'_v|-\sum_{v\in V(C)\cap V(F)} |\bR'_v|+|\bR_F| \\
&=|\bR'_{C}|-s|V(C)\cap V(F)|+|\bR_F|,
\end{align*}
where we used $\bR_v=\bR'_v$ for $v\in V(C)\setminus V(F)$ according to the definition of $\bR'$.
In total, if $C\cap F\neq \emptyset$,
\begin{align*}
&|C\cup F|=|C|+|F|-|C\cap F| \\
&\geq f_{\calM',k}(C)+1+f_{\calM,k}(F)-(k-s)(|V(C\cap F)|-1) \qquad (\text{by (\ref{eq:suf12})}) \\
&=k|V(C\cup F)|+k|V(C)\cap V(F)|-2k \\ 
& \hspace{7em} -|\bR'_{C}|-|\bR_F|+r_{\calM'}(\bR'_{C})+s-(k-s)(|V(C\cap F)|-1)+1 \\
&= k|V(C\cup F)|+k|V(C)\cap V(F)|-k-|\bR'_{C}|-|\bR_F|+r_{\calM'}(\bR'_{C})-(k-s)|V(C\cap F)|+1 \\
&\geq k|V(C\cup F)|+s|V(C)\cap V(F)|-k-|\bR'_{C}|-|\bR_F|+r_{\calM'}(\bR'_{C})+1 \\ 
& \hspace{23em} \text{ (by  $V(C\cap F)\subseteq V(C)\cap V(F)$)} \\
&=k|V(C\cup F)|-k-|\bR_{C\cup F}|+r_{\calM'}(\bR'_{C})+1 \qquad \text{ (by (\ref{eq:suf14}))} \\
&=k|V(C\cup F)|-k-|\bR_{C\cup F}|+r_{\calM}(\bR_{C\cup F})+1 \qquad \text{ (by (\ref{eq:suf13}))} \\
&=f_{\calM,k}(C\cup F)+1.
\end{align*}
On the other hand, if $C\cap F=\emptyset$,
\begin{align*}
|C\cup F|&\geq f_{\calM',k}(C)+1+f_{\calM,k}(F) \\
&= k|V(C\cup F)|+k|V(C)\cap V(F)|-2k-|\bR'_{C}|-|\bR_F|+r_{\calM'}(\bR'_{C})+s+1 \\
&= k|V(C\cup F)|+(k-s)|V(C)\cap V(F)|-2k-|\bR_{C\cup F}|+r_{\calM'}(\bR'_{C})+s+1 \quad \text{ (by (\ref{eq:suf14}))} \\
&\geq k|V(C\cup F)|-k-|\bR_{C\cup F}|+r_{\calM'}(\bR'_{C})+1 \qquad \text{ (by (\ref{eq:suf11}) and $k\geq s$)} \\
&= k|V(C\cup F)|-k-|\bR_{C\cup F}|+r_{\calM}(\bR_{C\cup F})+1 \qquad \text{ (by (\ref{eq:suf13}))} \\
&=f_{\calM,k}(C\cup F)+1.
\end{align*}
In either case $|C\cup F|>f_{\calM,k}(C\cup F)$. 
Since $C\cup F$ is an edge subset of $G$, this contradicts that  $G$ satisfies (C2) with respect to $\calM$. Thus (\ref{eq:claim:1}) is verified,
and the proof of Claim~\ref{claim:tight} is completed.
\end{proof}

By Claim~\ref{claim:tight}, we now consider the case where $(G,\bR)$ has no unbalanced proper tight set in the subsequent discussion.
Note that, in this situation, we have $r_{\calM}(\bR_F)<k$  for any proper tight set $F$ 
since any proper tight set $F$ with $r_{\calM}(\bR_F)=k$ cannot be balanced. 
(If  $F$ is balanced with $r_{\calM}(\bR_F)=k$, then $|\bR_v|=r_{\calM}(\bR_v)=r_{\calM}(\bR_F)=k$ for each $v\in V(F)$, and we will have $|F|=f_{\calM,k}(F)=k|V(F)|-|\bR_F|=0$.)

We say that an edge $uv\in E$ is {\em good} if 
${\rm sp}_{\cal M}(\bR_u)\neq {\rm sp}_{\calM}(\bR_v)$.
The following is the final claim.
\begin{claim}
\label{claim:suf_final}
There is a good edge in $G$.
\end{claim}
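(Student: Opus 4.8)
The plan is to argue by contradiction: assume that no edge of $G$ is good, i.e.\ ${\rm sp}_{\calM}(\bR_u)={\rm sp}_{\calM}(\bR_v)$ for every edge $uv\in E$. Since at this stage of the proof $G$ is connected, this immediately forces ${\rm sp}_{\calM}(\bR_u)={\rm sp}_{\calM}(\bR_v)$ for \emph{all} $u,v\in V$; call this common flat $W$, and let $\ell=r_{\calM}(W)=r_{\calM}(\bR_v)$ for every $v$. First I would dispose of the degenerate possibility $\ell=k$: if $r_{\calM}(\bR_v)=k$ for every $v$, then by (C1) each $\bR_v$ is a base, so $|\bR_v|=k$, and (C3) gives $|E|=k|V|-|\bR|=k|V|-k|V|=0$, contradicting $|E|>0$. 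So we may assume $\ell<k$, i.e.\ $W$ is a proper flat of $\calM$.

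Next I would show that the whole edge set $E$ is ``tight but not spanning enough'': compute $f_{\calM,k}(E)=k(|V|-1)-(|\bR|-r_{\calM}(\bR))$. Here $\bR=\bR_E$ (as $G$ is connected with $|E|>0$, every vertex is incident to an edge, so $V(E)=V$ and $\bR_E=\bR$), and $r_{\calM}(\bR)=r_{\calM}(W)=\ell$ because all $\bR_v\subseteq W$ and together they span $W$. Thus $f_{\calM,k}(E)=k|V|-k-|\bR|+\ell$. On the other hand (C3) gives $|E|=k|V|-|\bR|$, so $|E|-f_{\calM,k}(E)=k-\ell>0$, meaning $|E|>f_{\calM,k}(E)$. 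This directly contradicts (C2) applied to $F=E$ (equivalently, it says $E$ is dependent in $\calN(f_{\calM,k})$, contradicting the independence of $E$ noted after Lemma~\ref{lem:sub}). Hence some edge must be good.

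The only subtlety — and the step I would be most careful about — is the passage from ``no good edge'' to ``all $\bR_v$ have the same span''. This uses connectedness of $G$ together with the transitivity of the relation ${\rm sp}_{\calM}(\bR_u)={\rm sp}_{\calM}(\bR_v)$ along a path; since equality of closures is a genuine equivalence relation, a walk from $u$ to $v$ propagates the equality, and connectedness guarantees such a walk exists. One should also double-check that $r_{\calM}(\bR)=\ell$ rather than something larger: this is because $\bigcup_v \bR_v=\bR$ and each $\bR_v\subseteq \mathrm{sp}_{\calM}(\bR_v)=W$, so $\bR\subseteq W$ and $r_{\calM}(\bR)\le r_{\calM}(W)=\ell$, while $\bR\supseteq \bR_{v_0}$ which already spans $W$, giving the reverse inequality. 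Everything else is a one-line counting computation from (C2) and (C3), so I expect no real obstacle beyond keeping the two cases ($\ell=k$ versus $\ell<k$) straight.
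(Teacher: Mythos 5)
Your argument is correct and is essentially the paper's own proof: both first eliminate the case $r_{\calM}(\bR_v)=k$ for all $v$ using (C1) together with a counting condition, and both then use connectedness of $G$ to propagate ${\rm sp}_{\calM}(\bR_u)={\rm sp}_{\calM}(\bR_v)$ along edges. The paper closes the remaining case slightly more directly---once all the spans coincide with a flat of rank $\ell<k$, one already has $r_{\calM}(\bR)=\ell<k=r_{\calM}$, a contradiction---whereas you route the same fact through $f_{\calM,k}(E)$ and (C2)/(C3); both are valid.
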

\begin{proof}
Suppose every vertex $v\in V$ satisfies $r_{\calM}(\bR_v)=k$.
By (C1), $|\bR_v|=k$. This implies $E=\emptyset$ by (C2), a contradiction.

Thus  there is a vertex $u$ with $r_{\calM}(\bR_u)<k$.
Suppose there is no good edge in $G$.
Every $uv\in E$ incident to $u$ satisfies ${\rm sp}_{\calM}(\bR_u)={\rm sp}_{\calM}(\bR_v)$ 
since otherwise $uv$ becomes good.
Since $G$ is connected, we consequently have ${\rm sp}_{\calM}(\bR_u)={\rm sp}_{\calM}(\bR_v)$ for every $v\in V$ by applying the same argument to the neighbors.
This implies ${\rm sp}_{\calM}(\bR_u)={\rm sp}_{\calM}(\bR)$ and hence $r_{\calM}<k$, a contradiction.
\end{proof}

We are now ready to construct a basic rooted-tree decomposition of $(G,\bR)$.
Let $uv\in E$ be a good edge shown in Claim~\ref{claim:suf_final}.
Since ${\rm sp}_{\calM}(\bR_u)\neq {\rm sp}_{\calM}(\bR_v)$, without loss of generality, we can assume ${\rm sp}(\bR_v)\not\subseteq {\rm sp}(\bR_u)$.
Then there is an $r_{j}\in \bR_v$ such that $\bR_u\cup \{r_{j}\}$ is still independent in ${\cal M}$.
Let us prepare a copy $r$ of $u$ as a new root and let $\bR''=\bR\cup \{r\}$ be a new multiset.
A new matroid ${\calM}''$ on $\bR''$ is constructed from $\calM$ by inserting $r$ as a parallel element to $r_j$.
Also, let $G''=(V,E'')$ be the graph obtained from $G$ by removing $uv$.
We claim the following:
\begin{equation}
\label{eq:claim:G''}
\text{$(G'',\bR'')$ satisfies (C1)(C2)(C3) with respect to ${\calM}''$.}
\end{equation}

Clearly, $(G'',\bR'')$ satisfies (C1), as $\bR_u\cup \{r\}$ is independent.
Also, since $|E''|+|\bR''|=|E|+|\bR|$, (C3) is also satisfied.
What remains is to show (C2).
Note $|E''|=f_{\calM'',k}(E'')$ by $|E''|=|E|-1$ and $f_{\calM,k}(E)=f_{\calM'',k}(E'')+1$.
This implies that (C2) is satisfied for any $F\subseteq E''$ with $V(F)=V$.
Suppose $(G'',\bR'')$ does not satisfy (C2).
Then, there is a $C\subset E''$ such that $|C|>f_{\calM'',k}(C)$, $u\in V(C)$, and $V(C)\neq V$.
Combining the following three inequalities, $f_{\calM,k}(C)-1\leq f_{\calM'',k}(C)$, $|C|\leq f_{\calM,k}(C)$ and $f_{\calM'',k}(C)<|C|$,  we have $|C|=f_{\calM,k}(C)$.
Thus, $C$ is a proper tight set in $G$, which contains $u$.
By Claim~\ref{claim:tight}, this is balanced.
This implies ${\rm sp}_{\calM}(\bR_u)={\rm sp}_{\calM}(\bR_{C})$ for $u\in V(C)$ from the definition of balanced sets,
and $r_{\calM''}(\bR''_{C})=r_{\calM}(\bR_{C})+1$ by $r\notin {\rm sp}_{\cal M}(\bR_u)$.
In total, we obtain $f_{\calM'',k}(C)=f_{\calM,k}(C)$ as  $|\bR''_{C}|-r_{\calM''}(\bR''_{C})=|\bR_{C}|-r_{\calM}(\bR_{C})$.
This however contradicts $f_{\calM'',k}(C)<|C|\leq f_{\calM,k}(C)$, and thus (\ref{eq:claim:G''}) is verified.

Therefore, $(G'',\bR'')$  admits a basic rooted-tree decomposition $\{(T_1,r_1),\dots, (T_t,r_t), (T,r)\}$ by induction.
Define $T_i^*$ by $T_i^*=T_i$ for each $1\leq i\leq t$ with $i\neq j$, and define $T_j^*=T_j\cup T\cup\{uv\}$.
Then, $\{(T_1^*,r_1),\dots, (T_t^*,r_t)\}$ is a basic rooted-tree decomposition since $r$ is parallel to $r_j$ in ${\cal M}''$.

This completes the proof of Theorem~\ref{thm:partition}.
\end{proof}

\section{Algorithms}
\label{sec:alg}
We shall sketch an algorithm for checking the conditions of Theorem~\ref{thm:partition}.
(C1) and (C3) can be obviously checked in polynomial time, provided that the independence oracle of $\calM$ can be implemented in polynomial time.
(C2) can be checked by minimizing the function $f_{\calM,k}':2^{E}\rightarrow \mathbb{Z}$ defined by
\begin{equation*}
f_{\calM,k}'(F)=\begin{cases} 
+\infty & \text{ if } F=\emptyset, \\
f_{\calM,k}(F)-|F|  & \text{ otherwise. }
\end{cases}
\end{equation*}
Namely, (C2) is satisfied if and only if the minimum value of $f_{\calM,k}'$ is non-negative.
Since $f_{{\cal M},k}$ is submodular by Lemma~\ref{lem:sub}, $f_{\calM,k}'$ is an intersecting submodular function.
An intersecting submodular function can be minimized in polynomial time in terms of the size of the ground set and the number of function evaluations (see e.g.,\cite{Schriver,fujishige}).

Here we present an efficient algorithm  via matroid intersection.
The algorithm is based on the idea of Imai~\cite{Imai:1983};
he showed that checking $|F|\leq k|V(F)|-\ell$ for $F\subseteq E$ can be reduced to the problem of computing maximum matchings in auxiliary bipartite graphs.
We extend his technique by reducing to the problem of computing {\em independent matchings}.
For a bipartite graph $H=(V^+,V^-;E)$, suppose there are two matroids ${\cal N}^+=(V^+,{\cal I}^+)$ and ${\cal N}^-=(V^-,{\cal I}^-)$.
A matching $M$ of $H$ is called {\em independent} if $V^+(M)\in {\cal I}^+$ and $V^-(M)\in {\cal I}^-$.
The problem of computing a maximum independent matching is known to be equivalent to the matroid intersection.

Let us briefly sketch a standard algorithm for solving the independent matching problem, following the description given in \cite{murota_book}.
(Although a more efficient algorithm is known~\cite{Cunningham:1986},  the following one is enough for our purpose.)
For an independent matching $M$, consider an auxiliary digraph $\tilde{G}=(\tilde{V},\tilde{A};S^+,S^-)$, so-called the {\em exchangeability graph} with respect to $M$,
consisting of vertex set $\tilde{V}$, edge set $\tilde{A}$, entrance vertex set $S^+$, and exit vertex set $S^-$. 
These are defined as follows:
\begin{align*}
\tilde{V}&=V^+\cup V^-, \qquad \tilde{A}=A^{\circ}\cup M^{\circ}\cup A^+\cup A^-, \\
S^+&=V^+\setminus {\rm sp}_{\cal N^+}(V^+(M)), \qquad S^-=V^-\setminus {\rm sp}_{\cal N^-}(V^-(M)),
\end{align*}
where 
$A^{\circ}$ is a copy of $E$ with direction from $V^+$ to $V^-$,
$M^{\circ}$ is a copy of $M$ with direction from $V^-$ to $V^+$ and
\begin{align*}
A^+&=\{(u,v)\mid u\in V^+(M), v\in {\rm sp}_{\cal N^+}(V^+(M))\setminus V^+(M), V^+(M)-u+v\in {\cal I}^+\} \\
A^-&=\{(v,u)\mid u\in V^-(M), v\in {\rm sp}_{\cal N^-}(V^-(M))\setminus V^-(M), V^-(M)-u+v\in {\cal I}^-\}.
\end{align*}
The algorithm repeatedly constructs the exchangeability graph with respect to the current matching, 
finds an augmenting path (that is, a path from $S^+$ to $S^-$ in the exchangeability graph), and augments through the path.
If no augmenting path exists, then the current matching can be shown to be an optimal solution.
The time to construct the exchangeability graph is $O(|E|+|M||V|Q)$ for each phase and the total computational time becomes $O(r(|E|+r|V|Q))$, where $r$ is the size of a maximum independent matching and $Q$ is 
the time for independence oracle. See \cite{murota_book} for more detail.

With this background, we now show an efficient algorithm for checking (C2).
Let $G=(V,E)$ be a graph with roots $\bR$ and ${\cal M}$ be a matroid on $\bR$ of rank $k$.
We assume throughout the subsequent discussion that $(G,\bR)$ satisfies (C1) and (C3).
We consider an auxiliary graph $G^*=(V,E^*)$, which is obtained by regarding each root $r\in \bR_v$ as a self-loop (i.e., an edge having the same endpoints) attached to $v$.
Let $L$ be the set of these self-loops, and let $E^*=E\cup L$.
Due to the one-to-one correspondence between $\bR$ and $L$, we may think ${\cal M}$ as a matroid on $L$.
For an integer $\ell$ with $0\leq \ell\leq k$, we consider two set functions on $E$ and on $E^*$ defined by
\begin{align*}
f_{k,\ell}(F)&=k|V(F)|-\ell-|\bR_F|+r_{\cal M}(\bR_F) \qquad (F\subseteq E) \\
g_{k,\ell}(F)&=k|V(F)|-\ell+r_{\cal M}(F\cap L) \qquad (F\subseteq E^* ).
\end{align*}
Note $f_{{\cal M},k}=f_{k,k}$.
Also, it is easy to see that $|F|\leq f_{k,\ell}(F)$ holds for any non-empty $F\subseteq E$ if and only if $|F|\leq g_{k,\ell}(F)$ for any non-empty $F\subseteq E^{*}$.
Therefore, in the subsequent discussion, we shall focus on how to check the latter condition.

We first consider the case of $\ell=0$.
We prepare $k$ copies $V^1,\dots, V^k$ of $V$, and a copy $L'$ of $L$.
We define an auxiliary bipartite graph  $H_0=(V_0^+,V_0^-;A_0)$ as follows:
\begin{align*}
V_0^+&=E\cup L, \qquad V_0^-=(\mbox{$\bigcup_{1\leq i\leq k}V^i) \cup L'$}, \\
A_0&=\{(e,e')\mid e'\in L' \text{ is a copy of  } e\in L\} \\
&\ \ \ \ \cup \{(e,v^i)\mid \text{ if $v^i\in V^i$ is a copy of $v\in V$ and $e\in E\cup L$ is incident to $v$ in $G^{*}$}\}. 
\end{align*}
We consider a matroid ${\cal N}^-$ on $V_0^-$, which is the direct sum of ${\cal M}$ on $L'$ and the free matroid on $\bigcup_i V^i$.
We also consider the free matroid ${\cal N}^+$  on $V_0^+$. 
The following claim is immediate from  Rado's theorem.
\begin{lemma}
$|F|\leq g_{k,0}(F)$ holds for any $F\subseteq E^{*}$ if and only if 
$H_0$ has an independent matching covering $V_0^+$.
\end{lemma}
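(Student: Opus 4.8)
The plan is to recognize the claim as an instance of Rado's theorem on the existence of a matching covering one side of a bipartite graph that is simultaneously independent in a matroid on the other side. Recall Rado's theorem: given a bipartite graph $H=(X,Y;A)$ and a matroid $\calN$ on $Y$, there is a matching covering $X$ whose $Y$-endpoints are independent in $\calN$ if and only if $r_{\calN}(N_H(Z))\geq |Z|$ for every $Z\subseteq X$, where $N_H(Z)$ denotes the set of neighbours of $Z$. In our situation, since $\calN^+$ is the free matroid on $V_0^+$, the condition ``$H_0$ has an independent matching covering $V_0^+$'' is exactly ``$H_0$ has a matching covering $V_0^+$ whose $V_0^-$-endpoints are independent in $\calN^-$'', so Rado's theorem applies directly with $X=V_0^+=E\cup L$, $Y=V_0^-=(\bigcup_i V^i)\cup L'$, and $\calN=\calN^-$.

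First I would apply Rado's theorem to reduce the statement to checking that $r_{\calN^-}(N_{H_0}(F))\geq |F|$ for every $F\subseteq E\cup L$, i.e. every $F\subseteq E^*$. The next step is to compute $N_{H_0}(F)$ and then $r_{\calN^-}(N_{H_0}(F))$ explicitly. Writing $F=F_E\cup F_L$ with $F_E=F\cap E$ and $F_L=F\cap L$, the neighbourhood of $F$ in $H_0$ consists of: all copies $v^i$ (for $1\le i\le k$) of vertices $v\in V(F)$ incident to some edge of $F$ in $G^*$, together with the copies in $L'$ of the self-loops in $F_L$. Since $\calN^-$ is the direct sum of the free matroid on $\bigcup_i V^i$ (contributing $k|V(F)|$ to the rank, because each of the $k$ copies of each vertex of $V(F)$ appears) and $\calM$ on $L'$ (contributing $r_{\calM}(F_L)=r_{\calM}(F\cap L)$), we get $r_{\calN^-}(N_{H_0}(F))=k|V(F)|+r_{\calM}(F\cap L)=g_{k,0}(F)$. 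Hence the Rado condition $r_{\calN^-}(N_{H_0}(F))\geq |F|$ is precisely $|F|\leq g_{k,0}(F)$, which is what we wanted.

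The one point requiring a little care is the meaning of $V(F)$ in $G^*$ versus in $G$: a self-loop at $v$ is incident only to $v$, so the set of vertices incident to $F$ in $G^*$ is $V(F_E)\cup\{v : \bR_v\cap F_L\neq\emptyset\}$, and this is exactly the set we should call $V(F)$ for the purposes of $g_{k,0}$ (consistent with the convention $\bR_F=\{r_i\mid r_i\in V(F)\}$ used throughout). Once one checks that the two notions of $V(F)$ agree — which is immediate from the construction of $A_0$ — the rank computation above is exact and the equivalence follows. I do not expect any genuine obstacle here; the only thing to be vigilant about is getting the neighbourhood and the direct-sum rank formula exactly right, and making sure the empty-set case is handled (for $F=\emptyset$ both sides are trivial, and for non-empty $F$ the displayed inequality is the content). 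Thus the lemma is a direct corollary of Rado's theorem together with the explicit description of $\calN^-$.
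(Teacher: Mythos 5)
Your proposal is correct and follows essentially the same route as the paper: both invoke Rado's theorem (the paper in its min-max/deficiency form, you in the equivalent Hall-type covering form) and then compute $r_{\calN^-}(\Gamma(F))=k|V(F)|+r_{\calM}(F\cap L)=g_{k,0}(F)$ from the direct-sum structure of $\calN^-$. Your remark about counting self-loop endpoints in $V(F)$ for $G^*$ is the right point of care and matches the paper's convention.
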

\begin{proof}
Rado's theorem (see e.g.,\cite{murota_book}) implies that 
the size of a maximum independent matching is equal to 
\begin{equation}
\label{eq:rado}
\min\{r_{{\cal N}^-}(\Gamma(F))+|V_0^+\setminus F| \mid F\subseteq V_0^+ \},   
\end{equation}
where $\Gamma(F)$ denotes the set of neighbors of $F$ in $H_0$.
Notice $r_{\cal N^-}(\Gamma(F))+|V_0^+\setminus F|=k|V(F)|+r_{\cal M}(L'\cap F)+|(E\cup L)\setminus F|$ for any $F\subseteq V_0^+=E\cup L$.
Therefore, $|F|\leq g_{k,0}(F)$ holds for any $F\subseteq E$ if and only if the size of a maximum independent matching is equal to $|E\cup L|$.  
\end{proof}

Let us analyze the time complexity.
Let $Q$ be the time of independence oracle of ${\cal M}$.
The size of $H_0$ is $O(k^2|V|)$ by (C3). 
Notice also that, since ${\cal N}^+$ is free and ${\cal N}^-$ is the direct sum of ${\cal M}$ on $L'$ and the free matroid on $\bigcup_i V^i$,
the exchangeability graph satisfies $A^+=\emptyset$ and $A^-\subseteq L\times L$  for any independence matching $M$.
Therefore we can construct the exchangeability graph $\tilde{G}$ and find a path from $S^+$ to $S^-$ in $O(k^2|V|+|L|^2Q)$ time. 
The total time complexity thus becomes $O(k|V|(k^2|V|+|L|^2Q))$ since we have $O(|E\cup L|)=O(k|V|)$ iterations. 


Checking $|F|\leq g_{k,\ell}(F)$ for general $\ell$ can be performed by extending the idea above.
Take an edge $e\in E$, and prepare $\ell$ copies $e_1,\dots, e_{\ell}$ of $e$.
We consider an auxiliary bipartite graph $H_e=(V_e^+,V_e^-;A_e)$ defined by
\begin{align*}
V_e^+&=V_0^+\cup\{e_1,\dots, e_{\ell} \}, \qquad V_e^-=V_0^-, \\
A_e&=A_0\cup \{(e_j,v^i)\mid \text{ if $v^i\in V^i$ is a copy of $v\in V$ and  $e$ is incident to $v$ in $G^{*}$} \}.
\end{align*}
Then, the exactly same argument can be applied to show the following:
\begin{lemma}
Suppose $|F|\leq g_{k,0}(F)$ for any $F\subseteq E^{*}$.
Then, $|F|\leq g_{k,\ell}(F)$ holds for any $F\subseteq E^{*}$ with $e\in F$ if and only if 
$H_e$ has an independent matching covering $V_e^+$.
\end{lemma}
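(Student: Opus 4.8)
The plan is to follow the $\ell=0$ case closely: invoke Rado's theorem to reduce the existence of an independent matching of $H_e$ covering $V_e^+$ to a deficiency inequality for every subset of $V_e^+$, and then to show that this family of inequalities is equivalent to the hypothesis $|F|\le g_{k,0}(F)$ $(F\subseteq E^{*})$ together with the asserted inequality $|F|\le g_{k,\ell}(F)$ for the subsets $F$ containing $e$.

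First, by Rado's theorem, $H_e$ has an independent matching covering $V_e^+$ if and only if $r_{{\cal N}^-}(\Gamma(F))\ge|F|$ for every $F\subseteq V_e^+$, where $\Gamma(F)$ denotes the neighborhood of $F$ in $H_e$. I would split $F$ as $F=F_0\cup F'$ with $F_0=F\cap V_0^+$ and $F'=F\cap\{e_1,\dots,e_\ell\}$. The key point is that each of the new vertices $e_1,\dots,e_\ell$ has the \emph{same} neighborhood in $H_e$, namely $\{v^i : v\in V(e),\ 1\le i\le k\}$, which is exactly the $H_0$-neighborhood of the original vertex $e$ (since $e\in E$ is not a self-loop, this meets no copy in $L'$). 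As neighborhoods distribute over unions, this gives $\Gamma(F)=\Gamma_{H_0}(F_0)$ whenever $e\in F_0$ or $F'=\emptyset$, and $\Gamma(F)=\Gamma_{H_0}(F_0\cup\{e\})$ whenever $e\notin F_0$ and $F'\neq\emptyset$, where $\Gamma_{H_0}$ is the neighborhood in $H_0$. Moreover, exactly as computed in the $\ell=0$ case, $r_{{\cal N}^-}(\Gamma_{H_0}(X))=k|V(X)|+r_{\cal M}(X\cap L)=g_{k,0}(X)$ for every $X\subseteq E^{*}$, and recall $g_{k,\ell}=g_{k,0}-\ell$.

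Next I would distinguish three cases according to the shape of $F$. If $F'=\emptyset$ the inequality reads $g_{k,0}(F_0)\ge|F_0|$, which is precisely the hypothesis. If $e\in F_0$ then $\Gamma(F)$ does not depend on $F'$, so the tightest constraint is obtained by taking $F'=\{e_1,\dots,e_\ell\}$ (maximizing $|F|$), giving $g_{k,0}(F_0)\ge|F_0|+\ell$, that is, $|F_0|\le g_{k,\ell}(F_0)$; as $F_0$ ranges over all subsets of $E^{*}$ containing $e$, these are exactly the inequalities in the statement. Finally, if $e\notin F_0$ and $F'\neq\emptyset$, the tightest constraint ($F'=\{e_1,\dots,e_\ell\}$) is $g_{k,0}(F_0\cup\{e\})\ge|F_0|+\ell=|F_0\cup\{e\}|+\ell-1$, that is, $|F_0\cup\{e\}|\le g_{k,\ell}(F_0\cup\{e\})+1$, and this is implied by the second-case inequality applied to $F_0\cup\{e\}$ (a subset of $E^{*}$ containing $e$). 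Collecting the three cases, the deficiency condition for $H_e$ holds if and only if the hypothesis holds and $|F|\le g_{k,\ell}(F)$ holds for every $F\subseteq E^{*}$ with $e\in F$; since the hypothesis is assumed, the lemma follows.

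The only slightly delicate point is the bookkeeping of the neighborhoods — verifying that $e_1,\dots,e_\ell$ together contribute exactly $\Gamma_{H_0}(\{e\})$ (and in particular nothing in $L'$), and that the harmless extra $+1$ appearing in the third case makes those constraints redundant given the second-case constraints. Everything else simply reuses the rank computation and the Rado-theorem step already carried out for $\ell=0$.
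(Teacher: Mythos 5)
Your proof is correct and is exactly the argument the paper intends (the paper only says ``the exactly same argument can be applied'' and omits the details): apply Rado's theorem to $H_e$, note $r_{{\cal N}^-}(\Gamma_{H_0}(X))=g_{k,0}(X)$, and observe that the copies $e_1,\dots,e_\ell$ all share the neighborhood $\Gamma_{H_0}(\{e\})$, so the deficiency constraints split into your three cases, with the third redundant. The case analysis and the reduction to the $g_{k,\ell}$ inequalities for sets containing $e$ are all accurate.
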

Thus, if we check the size of a maximum independent matching in $H_e$ for every $e\in E$,
we can decide whether $|F|\leq g_{k,\ell}(F)$ for any non-empty $F\subseteq E^{*}$.
(Note that, if $|F|>g_{k,\ell}(F)$ for some $F\subseteq E^*$, then $F\cap E\neq \emptyset$ by (C1).)
Since a maximum independent matching of $H_e$ can be computed from that of $H_0$ by $\ell$ augmentations,
the additional time we need is  $O(\ell (k^2|V|+|L|^2Q))$.
Since we need to check it for every $e\in E$, the total computational time amounts to $O(\ell k|V|(k^2|V|+|L|^2Q))$. 
Consequently, we obtain the following.
\begin{theorem}
\label{thm:alg}
Let $G=(V,E)$ be a graph with a multiset $\bR$ of vertices and ${\cal M}$ be a matroid on $\bR$ of rank $k$.
Then, one can check whether $(G,\bR)$ satisfies the conditions of Theorem~\ref{thm:partition} in $O(k^4|V^2|+k^2|V||\bR|^2Q)$ time,
where $Q$ is the time of independent oracle of ${\cal M}$.
\end{theorem}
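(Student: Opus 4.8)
The statement follows by assembling the reductions of this section and tallying the cost, so the plan is organizational rather than conceptual: verify (C1) and (C3) directly and cheaply, then reduce (C2) to a bounded number of independent-matching computations. First I would check (C1) by querying the independence oracle of $\calM$ on $\bR_v$ for each $v\in V$, at cost $O(|V|Q)$; if some $\bR_v$ is dependent we report failure. Then I would check (C3) by comparing $|E|+|\bR|$ with $k|V|$ in $O(|E|+|V|)$ time, reporting failure if it does not hold. Having passed these, we may use throughout that $|E|=k|V|-|\bR|\le k|V|$ and that the auxiliary graph $G^{*}=(V,E^{*})$ with its self-loop set $L$ (so $|L|=|\bR|$) satisfies $|E^{*}|=|E|+|L|=k|V|$; these size bounds are exactly what produce the stated complexity.

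For (C2), recall that $f_{\calM,k}=f_{k,k}$ and that ``$|F|\le f_{k,\ell}(F)$ for all nonempty $F\subseteq E$'' is equivalent to ``$|F|\le g_{k,\ell}(F)$ for all nonempty $F\subseteq E^{*}$'', so (C2) is precisely the latter condition with $\ell=k$. The plan is: (a) build the bipartite graph $H_0$ with matroids $\calN^{+}$ (free) and $\calN^{-}$, and compute a maximum independent matching $M_0$ by the exchangeability-graph augmenting-path routine; since $|V_0^{+}|=|E^{*}|=k|V|$ there are $O(k|V|)$ phases, and because $\calN^{+}$ is free and $\calN^{-}$ has a free summand the exchangeability graph has $A^{+}=\emptyset$ and $A^{-}\subseteq L\times L$, so one phase costs $O(k^{2}|V|+|L|^{2}Q)$, giving a subtotal $O(k^{3}|V|^{2}+k|V||\bR|^{2}Q)$. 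If $M_0$ does not cover $V_0^{+}$, then $|F|>g_{k,0}(F)$ for some $F$, hence $|F|>g_{k,0}(F)-k=g_{k,k}(F)$ for that $F$, so (C2) fails and we stop. (b) Otherwise $M_0$ is a perfect matching on $V_0^{+}$, so the hypothesis of the last lemma holds, and for each edge $e\in E$ I would extend $M_0$ to a maximum independent matching of $H_e$ by at most $\ell=k$ further augmentations and test whether it covers $V_e^{+}$; by that lemma this decides whether $|F|\le g_{k,k}(F)$ for all $F$ containing $e$, and since any $F\subseteq E^{*}$ violating (C2) has $F\cap E\ne\emptyset$ by (C1), success for every $e\in E$ is exactly (C2). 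Each $H_e$ costs $O(k)\cdot O(k^{2}|V|+|L|^{2}Q)=O(k^{3}|V|+k|\bR|^{2}Q)$, and summing over the $|E|=O(k|V|)$ edges yields $O(k^{4}|V|^{2}+k^{2}|V||\bR|^{2}Q)$, which dominates the cost of (a), of (C1), and of (C3), giving the claimed bound.

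I do not anticipate a genuine mathematical obstacle, because correctness of every reduction invoked above---the equivalence of the $f_{k,\ell}$ and $g_{k,\ell}$ conditions, Rado's theorem for $H_0$, and the last lemma for $H_e$---is already established in this section; the work is careful bookkeeping. The delicate point is the per-phase bound $O(k^{2}|V|+|L|^{2}Q)$: one must use (C3) to see that $H_0$ has $O(k^{2}|V|)$ edges, hence $A^{\circ}$ has $O(k^{2}|V|)$ arcs and $|M^{\circ}|=O(k|V|)$, that the free matroids force $A^{+}=\emptyset$ and confine $A^{-}$ to $L\times L$ so that only $O(|L|^{2})$ oracle calls are needed per phase, and that the sink set $S^{-}$ is computable with $O(|L|)$ further oracle calls. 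A second point to respect is ordering: (C1) and (C3) must be checked before the size bounds $|E|\le k|V|$ and $|E^{*}|=k|V|$ are used and before the hypothesis ``$|F|\le g_{k,0}(F)$ for all $F$'' needed by the last lemma is invoked, which is why the procedure aborts as soon as one of the easy conditions fails.
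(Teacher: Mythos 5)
Your proposal is correct and follows essentially the same route as the paper: reduce (C2) to independent matchings in the auxiliary bipartite graphs $H_0$ and $H_e$, exploit (C3) to bound the sizes and the structure of the matroids to bound the per-phase cost by $O(k^2|V|+|L|^2Q)$, and sum over the $O(k|V|)$ edges with $\ell=k$ augmentations each. The only additions are explicit (and correct) bookkeeping details such as the early abort when $M_0$ fails to cover $V_0^+$, which the paper leaves implicit.
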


\noindent 
{\bf  Remark.} 
A {\em Dulmage-Mendelsohn-type decomposition} is also known for the independent matching problem.  
This decomposition for $H_e$ gives all information on tight sets (defined in the previous section) containing $e$, 
and we can efficiently find, say, a maximal tight set containing $e$ using the exchangeability graph with respect to a maximum matching 
(see~\cite[Chapter 2]{murota_book} for more detail).
Similarly, if $G$ does not satisfy the counting condition, the decomposition of $H_e$ shows maximal violating sets containing $e$.
Note that our proof of Theorem~\ref{thm:partition} is constructive, provided that we can detect a violating set if $G$ violates the counting condition.
We can thus explicity find a basic decomposition in polynomial time. 

%
%

\section{Dual Form of Theorem~\ref{thm:partition}}
\label{sec:dual}
In this section, we present a dual form of Theorem~\ref{thm:partition} which generalizes Tutte-Nash-Williams tree-packing theorem.
Extending the notion of rooted-trees, a pair $(C,r)$ of $C\subseteq E$ and $r\in V$ is called a {\em rooted-component} 
if either (i) $C=\emptyset$ or (ii) $C$ is connected and $r\in V(C)$.
Let $V(C,r)=V(C)\cup \{r\}$.
Also, for $X\subseteq V$, let $\bR_X$ be the multiset $\{r_i\in \bR\mid r_i\in X\}$.
\begin{theorem}
\label{thm:dual}
Let $G=(V,E)$ be a graph, $\bR=\{r_1,\dots, r_t\}$ be a multiset of vertices, $\calM$ be a matroid on $\bR$ of rank $k$ and the rank function $r_{\calM}:2^{\bR}\rightarrow \mathbb{Z}$.
Then, $(G,\bR)$ can be decomposed into rooted-components $(C_1,r_1),\dots, (C_t,r_t)$ such that the multiset $\{r_i\in\bR\mid v\in V(C_i,r_i)\}$ is a spanning set of $\calM$ for every $v\in V$ 
if and only if
\begin{equation}
\label{eq:dual}
|\delta_{G}({\cal P})|\geq k|{\cal P}|-\sum_{X\in {\cal P}}r_{\calM}(\bR_X)
\end{equation}
for every partition ${\cal P}$ of $V$ into non-empty subsets.
\end{theorem}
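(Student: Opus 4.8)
\textbf{Proof proposal for Theorem~\ref{thm:dual}.}
The plan is to derive the dual statement from Theorem~\ref{thm:partition} by a standard ``augment-the-matroid-by-coloops'' trick, mirroring how the Tutte--Nash-Williams packing theorem is obtained from the tree-partition theorem. First I would handle the easy (necessity) direction directly: given a decomposition into rooted-components $(C_1,r_1),\dots,(C_t,r_t)$ with the stated spanning property, fix a partition $\mathcal P$ of $V$. For each class $X\in\mathcal P$ and each $v\in X$, the multiset $\{r_i\mid v\in V(C_i,r_i)\}$ is spanning, so it has size at least $k$; summing a counting identity over $v$ (each $C_i$ with $V(C_i)\cap X\neq\emptyset$ either lies inside $X$, contributing $|V(C_i)\cap X|-1$ internal ``slots'', or crosses, contributing a boundary edge into $X$) gives $|\delta_G(\mathcal P)\text{ into }X| + |C_i\text{-edges inside }X|\text{-count} + \dots$; more cleanly, one counts that the number of $(C_i,r_i)$ that span some vertex of $X$ but whose root $r_i\notin X$ is at most $|\delta_G(\mathcal P)|$ restricted to $X$, while the spanning condition forces $|\bR_X|$ plus that crossing number to be at least $k$ per class only in aggregate; the cleanest route is: each component $(C_i,r_i)$ contributes to $\delta_G(\mathcal P)$ at least $(\text{number of classes it meets})-1$ edges, and it can ``cover'' the deficiency $k-r_{\calM}(\bR_X)$ of a class $X$ not containing $r_i$ only by entering $X$, so $\sum_{X}(k-r_{\calM}(\bR_X))\le \sum_i(\#\text{classes }C_i\text{ meets}-1)\le|\delta_G(\mathcal P)|$, which is \eqref{eq:dual}.

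For sufficiency, the key step is to reduce to Theorem~\ref{thm:partition}. Suppose \eqref{eq:dual} holds. Add to $G$ a new vertex and enough parallel edges, or more precisely: for each $v\in V$ attach $k-r_{\calM}(\bR_v)$ new leaf-edges $e_{v,1},\dots$ pointing to brand-new degree-one roots $r_{v,j}$, and extend $\calM$ to $\calM^+$ on the enlarged root multiset by declaring all the new $r_{v,j}$ to be coloops. One must then check that the enlarged graph-with-roots $(G^+,\bR^+)$ satisfies (C1), (C2), (C3) with respect to $\calM^+$ of rank $k^+ = k + \sum_v(k-r_{\calM}(\bR_v))$ --- wait, coloops raise the rank, so instead the right construction keeps the rank at $k$ by making the new elements parallel to existing independent witnesses, or alternatively one takes the free matroid route used inside the proof of Claim~\ref{claim:tight}. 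The honest approach: I would add, for each $v$, exactly $k-r_\calM(\bR_v)$ pendant edges to new roots that are coloops, working with the matroid $\calM^+$ of rank $k^+$; then a spanning-set-of-$\calM$ condition on the original roots is equivalent to a base-of-$\calM^+$ condition on the augmented roots at each $v$, because the coloops are forced into every base. Verifying (C3) is an identity from the choice of the number of pendants; verifying (C2) for subsets $F$ of the enlarged edge set reduces, after pushing pendant edges into the count, to \eqref{eq:dual}-type inequalities applied to the partition of $V(F)$ into singletons together with the complement --- this is where the hypothesis is used. Then Theorem~\ref{thm:partition} gives a basic rooted-tree decomposition of $(G^+,\bR^+)$; deleting the pendant edges and their trees, and observing that each surviving tree $(T_i,r_i)$ with $r_i\in\bR$ becomes a rooted-component (possibly after merging, but actually a subtree of a tree is a forest, so one must be slightly careful) of the original, yields the desired decomposition, the spanning property at each $v$ following because the deleted coloop-roots were the only ones missing from a base.

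The main obstacle, and the part that needs genuine care rather than bookkeeping, is the reduction of the family of inequalities (C2) for all $F\subseteq E^+$ to the single family \eqref{eq:dual} over all partitions of $V$. Unlike (C3), which is a bare cardinality identity, (C2) quantifies over arbitrary edge subsets, whereas \eqref{eq:dual} quantifies over partitions; bridging these requires the uncrossing/submodularity argument for $f_{\calM^+,k^+}$ (available via Lemma~\ref{lem:sub}) to show it suffices to test (C2) on subsets of the form $E^+[X_1]\cup\dots\cup E^+[X_m]$ induced by a partition, and then to match the pendant-edge contributions with the $\sum_X r_\calM(\bR_X)$ term on the nose. A secondary subtlety is that when the original $(G,\bR)$ or the enlargement is disconnected one must invoke Lemma~\ref{lem:connected}, and that ``rooted-component'' is the correct dual object (connected, contains its root) matching what the pendant-deletion produces --- one should check the deleted tree $(T_{v,j},r_{v,j})$ never strands a vertex, i.e.\ that the remaining $T_i$'s still collectively span each $v$ the right number of times, which is exactly the content of basicness of the $\calM^+$-decomposition. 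I expect the cleanest writeup to first prove necessity, then set up the augmentation, then state and prove the equivalence of the two counting families as a lemma, and finally assemble the decomposition.
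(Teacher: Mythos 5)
Your necessity argument is essentially the paper's (the paper orients each $C_i$ towards its root and counts arcs entering each class $X$; your count of ``classes met minus one'' is an equivalent bookkeeping), and that half is fine. The sufficiency direction, however, has a genuine gap, and your own ``wait, coloops raise the rank'' aside is the symptom of it. If every pendant root is a coloop of $\calM^+$, then every base of $\calM^+$ contains \emph{all} of them, so the basicness condition at a vertex $v$ would force the trees spanning $v$ to reach the pendant roots attached to every \emph{other} vertex as well; the per-vertex ``base of $\calM^+$'' condition you want is simply not equivalent to ``spanning set of $\calM$'' under this construction, and you never resolve which variant of the augmentation you are actually using. More fundamentally, no pendant-adding scheme can make (C3) hold: in the packing setting one only has $|E|+|\bR|\geq k|V|$, possibly with strict surplus (and $\bR_v$ may even be dependent, violating (C1)), and attaching a pendant edge together with a new vertex and new roots changes both sides of (C3) in lockstep, so a pre-existing surplus of edges can never be cancelled. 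Theorem~\ref{thm:partition} demands an exact count, so your reduction is applied to an object that in general does not satisfy its hypotheses.

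The paper's route supplies exactly the two ingredients you are missing. First, it prunes $\bR$ to a maximal sub-multiset $\bR'$ satisfying (C1) (so $r_{\calM}(\bR'_X)=r_{\calM}(\bR_X)$ for all $X$). Second --- and this is the step you flagged as ``the main obstacle'' but did not carry out --- it proves an explicit rank formula for the count matroid $\calN(f_{\calM,k})$ as a minimum over partitions of $V$ (Theorem~\ref{thm:rank}, whose uncrossing step is Lemma~\ref{lem:disjoint}). Inequality (\ref{eq:dual}) then says precisely that $E$ has rank $k|V|-|\bR'|$ in $\calN(f_{\calM|\bR',k})$, so one can \emph{select a base} $E'\subseteq E$; the subgraph $(G'=(V,E'),\bR')$ satisfies (C1)(C2)(C3) on the nose, Theorem~\ref{thm:partition} decomposes it into basic rooted trees, and the surplus edges of $E\setminus E'$ are then dumped into arbitrary incident trees, which is exactly why the dual objects are rooted \emph{components} rather than trees. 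Without the base-selection step your construction has no way to dispose of the surplus edges, and without the rank formula you have no bridge from the partition-indexed hypothesis (\ref{eq:dual}) to the subset-indexed condition (C2).
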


Theorem~\ref{thm:dual} follows from a standard argument based on an explicit formula of the rank function of ${\cal N}(f_{\calM,k})$ given in Theorem~\ref{thm:rank} below.
The following lemma indicates a reason why the rank function can be described in such a simple way.
\begin{lemma}
\label{lem:disjoint}
Let $G=(V,E)$ be a graph $\bR$ be a multiset of vertices, and ${\cal M}$ be a matroid of rank $k$ and the rank function $r_{\calM}$.
Suppose (C1) is satisfied.
Then, for any $F_1,F_2\subseteq E$ with $V(F_1)\cap V(F_2)\neq \emptyset$, 
$f_{\calM,k}(F_1)+f_{\calM,k}(F_2)\geq f_{\calM,k}(F_1\cup F_2)$ holds.
\end{lemma}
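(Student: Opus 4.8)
The plan is to prove the inequality $f_{\calM,k}(F_1)+f_{\calM,k}(F_2)\geq f_{\calM,k}(F_1\cup F_2)$ by separating the ``graphic'' part from the ``matroid'' part of $f_{\calM,k}$. Recall from the proof of Lemma~\ref{lem:sub} that $f_{\calM,k}(F)=\sum_{v\in V(F)}(k-|\bR_v|)-k+r_{\calM}(\bR_F)$. Writing $g(F)=\sum_{v\in V(F)}(k-|\bR_v|)$, which by (C1) is a nonnegative sum over $V(F)$ of the form $\sum_{v\in V(F)}\bm{b}(v)$, we have $f_{\calM,k}(F)=g(F)-k+r_{\calM}(\bR_F)$. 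So the claimed inequality becomes
\begin{equation*}
g(F_1)+g(F_2)-2k+r_{\calM}(\bR_{F_1})+r_{\calM}(\bR_{F_2})\geq g(F_1\cup F_2)-k+r_{\calM}(\bR_{F_1\cup F_2}).
\end{equation*}

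First I would handle the graphic term. Since $V(F_1\cup F_2)=V(F_1)\cup V(F_2)$ and each $\bm b(v)\geq 0$, we get $g(F_1)+g(F_2)=g(F_1\cup F_2)+\sum_{v\in V(F_1)\cap V(F_2)}\bm b(v)\geq g(F_1\cup F_2)$; more precisely $g(F_1)+g(F_2)-g(F_1\cup F_2)=\sum_{v\in V(F_1)\cap V(F_2)}(k-|\bR_v|)$. The point of the hypothesis $V(F_1)\cap V(F_2)\neq\emptyset$ is that this intersection is nonempty, so this ``excess'' sum has at least one term; since each term is $\geq 0$ it is thus $\geq 0$, but I actually want a quantitative handle. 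Next I would handle the matroid term: by submodularity of $r_{\calM}$ applied to $\bR_{F_1}$ and $\bR_{F_2}$, and using $\bR_{F_1}\cup \bR_{F_2}=\bR_{F_1\cup F_2}$, we have $r_{\calM}(\bR_{F_1})+r_{\calM}(\bR_{F_2})\geq r_{\calM}(\bR_{F_1\cup F_2})+r_{\calM}(\bR_{F_1}\cap\bR_{F_2})\geq r_{\calM}(\bR_{F_1\cup F_2})$ (the last step because rank is nonnegative). Combining, the left side minus the right side equals
\begin{equation*}
\sum_{v\in V(F_1)\cap V(F_2)}(k-|\bR_v|) - k + \bigl(r_{\calM}(\bR_{F_1})+r_{\calM}(\bR_{F_2})-r_{\calM}(\bR_{F_1\cup F_2})\bigr),
\end{equation*}
and I need to show this is $\geq 0$. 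The first sum is over a nonempty vertex set, but its terms could be as small as $0$, and the ``$-k$'' is a genuine deficit; so the naive bound is not enough on its own.

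The main obstacle, then, is precisely absorbing the ``$-k$'' term. The right way is to pick a vertex $w\in V(F_1)\cap V(F_2)$ (which exists by hypothesis) and bound the first sum below by its single term $k-|\bR_w|$, and simultaneously keep the matroid excess $r_{\calM}(\bR_{F_1})+r_{\calM}(\bR_{F_2})-r_{\calM}(\bR_{F_1\cup F_2})$. I would argue that $k-|\bR_w| + r_{\calM}(\bR_{F_1})+r_{\calM}(\bR_{F_2})-r_{\calM}(\bR_{F_1\cup F_2}) \geq k$, i.e. that $r_{\calM}(\bR_{F_1})+r_{\calM}(\bR_{F_2})-r_{\calM}(\bR_{F_1\cup F_2}) \geq |\bR_w|$. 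Since $w\in V(F_1)\cap V(F_2)$, the multiset $\bR_w$ is contained in both $\bR_{F_1}$ and $\bR_{F_2}$, hence in $\bR_{F_1}\cap\bR_{F_2}$; and by (C1), $\bR_w$ is independent, so $r_{\calM}(\bR_w)=|\bR_w|$. Thus by the submodular inequality $r_{\calM}(\bR_{F_1})+r_{\calM}(\bR_{F_2})\geq r_{\calM}(\bR_{F_1\cup F_2})+r_{\calM}(\bR_{F_1}\cap\bR_{F_2})\geq r_{\calM}(\bR_{F_1\cup F_2})+r_{\calM}(\bR_w)=r_{\calM}(\bR_{F_1\cup F_2})+|\bR_w|$, which gives exactly what is needed. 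Plugging back, the difference of the two sides is $\geq (k-|\bR_w|)-k+|\bR_w|=0$, completing the proof. I would present this cleanly: fix $w$, use $g(F_1)+g(F_2)-g(F_1\cup F_2)\geq k-|\bR_w|$, use the submodular chain to get the matroid excess $\geq |\bR_w|$, and add.
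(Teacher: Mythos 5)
Your proof is correct and follows essentially the same route as the paper: both decompose $f_{\calM,k}$ into the modular vertex sum $\sum_{v\in V(F)}(k-|\bR_v|)$ plus the matroid term, apply submodularity of $r_{\calM}$, and absorb the $-k$ deficit using a vertex $w\in V(F_1)\cap V(F_2)$ together with (C1) and the bound $|\bR_w|\leq r_{\calM}(\bR_{F_1}\cap\bR_{F_2})$. The only cosmetic difference is that the paper keeps the full sum over $V(F_1)\cap V(F_2)$ and bounds each $|\bR_v|$ by $s=r_{\calM}(\bR_{F_1}\cap\bR_{F_2})$, while you isolate a single vertex $w$; the two are equivalent.
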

\begin{proof}
From $V(F_1\cup F_2)=V(F_1)\cup V(F_2)$, 
$\bR_{F_1\cup F_2}=\bR_{F_1}\cup \bR_{F_2}$, and the submodularity of $r_{\calM}$, it easily follows that
$f_{\calM,k}(F_1)+f_{\calM,k}(F_2)-f_{\calM,k}(F_1\cup F_2) 
\geq k|V(F_1)\cap V(F_2)|-k-|\bR_{F_1}\cap \bR_{F_2}|+r_{\calM}(\bR_{F_1})+r_{\calM}(\bR_{F_2})-r_{\calM}(\bR_{F_1}\cup \bR_{F_2})
\geq k|V(F_1)\cap V(F_2)|-k-|\bR_{F_1}\cap \bR_{F_2}|+r_{\calM}(\bR_{F_1}\cap \bR_{F_2})$.
Let $s=r_{\calM}(\bR_{F_1}\cap \bR_{F_2})$.
By (C1), we have $|\bR_v|\leq s$ for every $v\in V(F_1)\cap V(F_2)$.
We thus have $k|V(F_1)\cap V(F_2)|-k-|\bR_{F_1}\cap \bR_{F_2}|+r_{\calM}(\bR_{F_1}\cap \bR_{F_2})=\sum_{v\in V(F_1)\cap V(F_2)} (k-|\bR_v|)-k+s\geq 0$,
where the last inequality follows from  $|V(F_1)\cap V(F_2)|\geq 1$ and $|\bR_v|\leq s$.
\end{proof}

For $F\subseteq E$ and a partition ${\cal P}$ of $V$, $\delta_F({\cal P})$ denotes the subset of $F$ connecting two distinct components of ${\cal P}$.
\begin{theorem}
\label{thm:rank}
Let $(G=(V,E),\bR)$ be a graph with roots, and ${\cal M}$ be a matroid on $\bR$ of rank $k$ and the rank function $r_{\calM}$.
Suppose (C1) is satisfied.
Then, the rank of $F\subseteq E$ in ${\cal N}(f_{\calM,k})$ is equal to
\begin{equation}
\label{eq:rank}
\min \{|\delta_F({\cal P})|+k(|V|-|{\cal P}|)-|\bR|+\sum_{X\in {\cal P}} r_{\calM}(\bR_X) \},
\end{equation}
where the minimum is taken over all partitions ${\cal P}$ of $V$ into non-empty subsets.
\end{theorem}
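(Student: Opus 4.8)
The plan is to derive Theorem~\ref{thm:rank} from Proposition~\ref{prop:rank} by simplifying the generic rank formula for a matroid induced by a monotone submodular function. Recall that $f_{\calM,k}$ is monotone and submodular (Lemma~\ref{lem:sub}, using (C1) and $c=k=r_\calM$), so Proposition~\ref{prop:rank} applies and gives
\begin{equation*}
r_{\calN(f_{\calM,k})}(F)=\min\Bigl\{|F_0|+\sum_{i=1}^m f_{\calM,k}(F_i)\Bigr\},
\end{equation*}
the minimum over all partitions $\{F_0,F_1,\dots,F_m\}$ of $F$ with each $F_i$ ($i\ge 1$) non-empty. The goal is to show this equals the expression in \eqref{eq:rank}, which is indexed by vertex partitions rather than edge partitions.

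First I would establish the ``$\le$'' direction: given any vertex partition $\cal P$ of $V$, take $F_0=\delta_F(\cal P)$ and let $F_1,\dots,F_m$ be the non-empty classes among $\{F\cap E(X)\mid X\in{\cal P}\}$, where $E(X)$ is the edge set induced on $X$. Then $|F_0|+\sum_i f_{\calM,k}(F_i)$ must be shown to be at most the quantity in \eqref{eq:rank}. The main point here is that $\sum_i f_{\calM,k}(F\cap E(X))\le \sum_{X\in\cal P}\bigl(k(|X|-1)-|\bR_X|+r_\calM(\bR_X)\bigr)$, using $|V(F\cap E(X))|\le |X|$, $\bR_{F\cap E(X)}\subseteq\bR_X$, monotonicity of $r_\calM$, and the fact that $|\bR_Y|-r_\calM(\bR_Y)$ is monotone (which follows since $|\bR_Y|-r_\calM(\bR_Y)$ counts the "corank" and only grows when elements are added). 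Summing the per-class bounds and using $\sum_X(|X|-1)=|V|-|\cal P|$ and $\sum_X|\bR_X|=|\bR|$ yields exactly \eqref{eq:rank}. One should be slightly careful about classes $X$ with $F\cap E(X)=\emptyset$, where the contribution to the edge-side is $0$ but the vertex-side term $k(|X|-1)-|\bR_X|+r_\calM(\bR_X)$ is still $\ge 0$ by (C1) and $|X|\ge1$, so the inequality is preserved; also vertices of $V$ not incident to $F$ need accounting, but they only add nonnegative slack to the right-hand side.

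For the ``$\ge$'' direction, I would start from an optimal edge partition $\{F_0,F_1,\dots,F_m\}$ achieving the Proposition~\ref{prop:rank} minimum, and I may assume by Lemma~\ref{lem:disjoint} that the vertex sets $V(F_1),\dots,V(F_m)$ are pairwise disjoint: if $V(F_i)\cap V(F_j)\neq\emptyset$ for $i\neq j$, then replacing $F_i,F_j$ by $F_i\cup F_j$ does not increase $\sum f_{\calM,k}$ (this is precisely Lemma~\ref{lem:disjoint}) and does not change the partitioned ground set, so by finiteness we may assume disjointness. Then I define a vertex partition $\cal P$ whose classes are $V(F_1),\dots,V(F_m)$ together with singletons $\{v\}$ for every $v\in V\setminus\bigcup_i V(F_i)$. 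Every edge of $F_0$ either joins two distinct $V(F_i)$'s, or joins a singleton to something, or joins two singletons — in all cases it lies in $\delta_F(\cal P)$, so $|F_0|\ge|\delta_F(\cal P)|$ provided $F_0$ contains no edge inside a single $V(F_i)$; that can be arranged since such an edge could be moved into $F_i$ without increasing the objective (as $f_{\calM,k}(F_i+e)\le f_{\calM,k}(F_i)+$ something — actually more simply, by optimality $F_0$ can be assumed to contain only edges that are "cross" edges, otherwise the objective could only decrease or stay equal by monotonicity and submodularity). Then one computes $|F_0|+\sum_i f_{\calM,k}(F_i)\ge|\delta_F(\cal P)|+\sum_{X\in\cal P}\bigl(k(|X|-1)-|\bR_X|+r_\calM(\bR_X)\bigr)$, where for the singleton classes $\{v\}$ the summand is $-|\bR_v|+r_\calM(\bR_v)=0$ by (C1), matching the edge-side value $0$ exactly; rewriting the sum as $k(|V|-|\cal P|)-|\bR|+\sum_X r_\calM(\bR_X)$ gives \eqref{eq:rank}.

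The main obstacle I anticipate is the bookkeeping in the ``$\le$'' direction regarding the term $|\bR_F|-r_\calM(\bR_F)$ versus its per-class sum: one needs $\sum_{X}(|\bR_X|-r_\calM(\bR_X))\ge |\bR|-\sum_X r_\calM(\bR_X)$ — which is actually an equality since $\sum_X|\bR_X|=|\bR|$ — combined with handling roots sitting at vertices not in $V(F)$, and making sure the truncation of the partition to $V(F)$ versus all of $V$ is reconciled by the nonnegativity of each vertex-side summand (again from (C1)). Once these sign/monotonicity facts are pinned down, the rest is a direct substitution. Finally, Theorem~\ref{thm:dual} will follow by the standard argument: $(G,\bR)$ has the desired rooted-component decomposition iff $E$ has full rank $k|V|-|\bR|$ in the co-matroid sense, i.e.\ iff the dual/complementary counting holds, which by Theorem~\ref{thm:rank} with $F=E$ and rearranging \eqref{eq:rank}$\ge k|V|-|\bR|$ becomes exactly \eqref{eq:dual}.
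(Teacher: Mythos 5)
Your proposal is correct and follows essentially the same route as the paper: apply Proposition~\ref{prop:rank}, pass from edge partitions to vertex partitions by adding singleton classes (whose contribution vanishes by (C1)), and use Lemma~\ref{lem:disjoint} to make the sets $V(F_i)$ of an optimal edge partition pairwise disjoint. The only cosmetic difference is that you obtain disjointness by repeated merging rather than by choosing a minimizer with smallest $m$, and your handling of vertices incident only to $F_0$ (made into singletons) is, if anything, slightly more careful than the paper's.
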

\begin{proof}
By Proposition~\ref{prop:rank}, the rank of $F$ in ${\cal N}(f_{\calM,k})$ is equal to
\begin{equation}
\label{eq:form1}
\min\{|F_0|+\sum_{1\leq i\leq m}f_{\calM,k}(F_i)\},
\end{equation}
where the minimum is taken over all partitions $\{F_0,F_1,\dots, F_m\}$ of $F$ such that $F_i\neq \emptyset$ for each $i=1,\dots,m$ (and $F_0$ may be empty).
Let $\{F_0,F_1,\dots, F_m\}$ be a minimizer of (\ref{eq:form1}) such that $m$ is smallest among all minimizers.
Let $X_i=V(F_i)$ for $i=1,\dots, m$ and let $X_v=\{v\}$ for every $v\in V\setminus V(F)$.
We set ${\cal P}_1=\{X_i\mid 1\leq i\leq m\}$, ${\cal P}_2=\{X_v\mid v\in V\setminus V(F)\}$, and ${\cal P}={\cal P}_1\cup {\cal P}_2$.
By the minimality of $m$ and Lemma~\ref{lem:disjoint}, ${\cal P}_1$ is a partition of $V(F)$ and hence ${\cal P}$ is a partition of $V$.
Also, if $uv\in F_0$  satisfies $u\in V(F_j)$ and $v\in V(F_j)$ for some $j$ with $1\leq j\leq m$, then we have
$|F_0|+\sum_{i}f_{\calM,k}(F_i)>|F_0-uv|+f_{\calM,k}(F_j+uv)+\sum_{i\neq j}f_{\calM,k}(F_i)$ by $f_{\calM,k}(F_j)=f_{\calM,k}(F_j+uv)$,
which contradicts that  $\{F_0,F_1,\dots, F_m\}$ is a minimizer of (\ref{eq:form1}),
Thus for each $uv\in F_0$ there is no $i$ such that $V(F_i)$ contains both $u$ and $v$, implying $F_0=\delta_F({\cal P})$.
Also, since each component of ${\cal P}_2$ consists of a single vertex of $V\setminus V(F)$, we clearly have
\begin{align*}
&k(|V\setminus V(F)|-|{\cal P}_2|)=0 \\
&\sum_{X\in {\cal P}_2}(|\bR_X|-r_{\calM}(\bR_X))=\sum_{v\in V\setminus V(F)}(|\bR_v|-r_{\calM}(\bR_v))=0.
\end{align*}
In total, the rank of $F$ is equal to
\begin{align*}
|F_0|+\sum_{1\leq i\leq m}f_{\calM,k}(F_i)
&=|\delta_F({\cal P})|+\sum_{X\in {\cal P}_1} (k|X|-k-|\bR_X|+r_{\cal M}(\bR_X)) \\
&=|\delta_F({\cal P})|+k(|V(F)|-|{\cal P}_1|)-\sum_{X\in {\cal P}_1} (|\bR_X|-r_{\cal M}(\bR_X)) \\
&=|\delta_F({\cal P})|+k(|V|-|{\cal P}|)-\sum_{X\in {\cal P}} (|\bR_X|-r_{\cal M}(\bR_X)) \\
&=|\delta_F({\cal P})|+k(|V|-|{\cal P}|)-|\bR|+\sum_{X\in {\cal P}} r_{\calM}(\bR_X),
\end{align*}
and hence (\ref{eq:rank}) is at most the rank of $F$.

To see the converse direction, consider a partition ${\cal P}=\{X_1,\dots, X_s\}$ of $V$ into non-empty subsets.
Let $F_0=\delta_F({\cal P})$ and $F_i=\{uv\in F\mid  u\in X_i, v\in X_i\}$ for $1\leq i\leq s$.
Then, $\{F_0,F_1,\dots, F_s\}$ is a partition of $F$.
Note that, for any $X_i\in {\cal P}$, we have $k(|X_i|-1)-|\bR_{X_i}|+r_{\cal M}(\bR_{X_i})\geq 0$.
Thus, we have
\begin{align*}
&|\delta_F({\cal P})|+k(|V|-|{\cal P}|)-|\bR|+\sum_{X\in {\cal P}} r_{\calM}(\bR_X) \\
&=|\delta_F({\cal P})|+\sum_{X_i\in {\cal P}} [k(|X_i|-1)-|\bR_{X_i}|+r_{\cal M}(\bR_{X_i})] \\
&\geq |\delta_F({\cal P})|+\sum_{i:F_i\neq \emptyset} [k(|X_i|-1)-|\bR_{X_i}|+r_{\cal M}(\bR_{X_i})] \\
&=|F_0|+\sum_{i:F_i\neq \emptyset} f_{\calM,k}(F_i)
\end{align*}
and hence (\ref{eq:rank}) is no less than the rank of $F$.
\end{proof}

\begin{proof}[Proof of Theorem~\ref{thm:dual}]
(Necessity:)
Suppose $(G,\bR)$ admits a decomposition $(C_1, r_1),\dots, (C_t,r_t)$ such that $\{r_i\in \bR\mid v\in V(C_i,r_i)\}$ is a spanning set of ${\cal M}$ for each $v\in V$.
Since $C_i$ is connected with $r_i\in V(C_i)$, we can assign an orientation of each edge of $C_i$ such that each vertex of $V(C_i)\setminus \{r_i\}$ has in-degree at least one.
Suppose we orient all of $C_i$ in such a way.
Observe then that, for every $X\in {\cal P}$,
the sum of $r_{\cal M}(\bR_X)$ and the number of arcs entering to $X$ must be at least $k$ because the multiset $\{r_i\in \bR\mid v\in V(C_i,r_i)\}$ has rank $k$ for every $v\in X$.
This however implies $|\delta_G({\cal P})|+\sum_{X\in {\cal P}}r_{\cal M}(\bR_X)\geq k|{\cal P}|$ and ${\cal P}$ satisfies (\ref{eq:dual}). 

(Sufficiency:)
Suppose (\ref{eq:dual}) is satisfied.
Take a maximal multi-subset $\bR'$ of $\bR$ that satisfies (C1).
Without loss of generality, we denote $\bR'=\{r_1,\dots, r_{t'}\}$.
From the maximality of $\bR'$, $r_{\calM}(\bR'_X)=r_{\calM}(\bR_X)$ holds for any $X\subseteq V$.
Using (\ref{eq:dual}) we obtain
\begin{align*}
&|\delta_G({\cal P})|+k(|V|-|{\cal P}|)-|\bR'|+\sum_{X\in {\cal P}} r_{\calM}(\bR'_X) \\
&=|\delta_G({\cal P})|+k(|V|-|{\cal P}|)-|\bR'|+\sum_{X\in {\cal P}} r_{\calM}(\bR_X) \geq k|V|-|\bR'|
\end{align*}
for every partition ${\cal P}$ of $V$.
Theorem~\ref{thm:rank} thus implies that the rank of ${\cal N}(f_{\calM|\bR',k})$ is equal to $k|V|-|\bR'|$,
and hence, taking a base $E'$ of ${\cal N}(f_{\calM|\bR',k})$, we obtain a subgraph $(G'=(V,E'),\bR')$ of $(G,\bR')$ that satisfies (C1)(C2)(C3) with respect to $\calM|\bR'$.
By Theorem~\ref{thm:partition}, $(G',\bR')$ admits a basic rooted-tree decomposition $(T_1,r_1), \dots, (T_{t'},r_{t'})$.
For each $uv\in E\setminus E'$ there clearly exists at least one rooted-tree $(T_i,r_i)$ with $u\in V(T_i,r_i)$ or $v\in V(T_i,r_i)$.
We add $uv$ to (arbitrary one of) such a $(T_i,r_i)$.
We then obtain  a desired rooted-component decomposition.
\end{proof}

\section{Body-bar Frameworks with Boundaries}
\label{sec:body_bar}
We now move to applications of Theorem~\ref{thm:partition} to rigidity theory.
This section concerns with body-bar frameworks, which are structures consisting of rigid bodies articulated by bars as shown in Figure~\ref{fig:body_bar}.
In particular we propose extensions of Tay's combinatorial characterization for infinitesimal rigidity of body-bar frameworks to those with bar-boundary and pin-boundary, 
where some of bodies are linked to the external fixed environment by bars or pins as shown in Figure~\ref{fig:body_bar}(b)(c).

We begin with introducing necessary terminology from geometry, and then we review Tay's combinatorial characterization in Subsection~\ref{subsec:body_bar}.
In Subsection~\ref{subsec:body_bar_bar} we shall discuss body-bar frameworks with bar-boundary and present an extension of Tay's result (Theorem~\ref{thm:body_bar}) based on basic rooted-tree decompositions.
In Subsection~\ref{subsec:body_bar_pin} we present an extension of Tay's result to pinned body-bar frameworks by reducing them to the bar-boundary case.

\subsection{Grassmannian}
\label{subsec:grassmannian}
Throughout the subsequent discussion we use following notation.
The homogenous coordinate of a point in the real projective space $\mathbb{P}^d$ is written by $[p]$, that is, the ratio of the coordinates of $p\in \mathbb{R}^{d+1}\setminus\{0\}$.
Conversely for $p\in \mathbb{R}^d$ the corresponding  homogenous coordinate is denoted by $[p,1]$ by a canonical embedding of $\mathbb{R}^d$ to $\mathbb{P}^d$.
Also we simply denote $D={d+1\choose 2}$.

Recall that the exterior product $\bigwedge^k \mathbb{R}^{d+1}$ of degree $k$ is a ${d+1 \choose k}$-dimensional vector space.
In particular, we may identify $\bigwedge^2 \mathbb{R}^{d+1}$ with $\mathbb{R}^D$.
The standard Euclidean inner product will be used throughout the paper.
Also, for $a\in \bigwedge^k \mathbb{R}^{d+1}$ and $b\in \bigwedge^{d+1-k} \mathbb{R}^{d+1}$, 
the inner product $\langle a, \ast b\rangle$ of $a$ and the Hodge dual  $\ast b$ of $b$ is simply denoted by $\langle a, b\rangle$ 
(see e.g.,~\cite{crapo1991} where $\ast$ is called the Hodge star complement).

The collection of $k$-dimensional subspaces in $\mathbb{R}^{d+1}$ is called the {\em Grassmannian}, denoted $Gr(k,\mathbb{R}^{d+1})$.
The {\em Pl{\"u}cker embedding} $p^*:Gr(k,\mathbb{R}^{d+1})\rightarrow \mathbb{P}(\bigwedge^k \mathbb{R}^{d+1})$ is a bijection
between $k$-dimensional vector spaces $X\in Gr(k,\mathbb{R}^{d+1})$ and 
projective equivalence classes $[v_1\wedge \dots \wedge v_k]\in \mathbb{P}(\bigwedge^k \mathbb{R}^{d+1})$ of decomposable elements,
where $\{v_1,\dots, v_k\}$ is a basis of $X$.
In the subsequent discussions, we shall identify $Gr(k,\mathbb{R}^{d+1})$ with its image of the Pl{\"u}cker embedding,
and regard $Gr(k,\mathbb{R}^{d+1})$ as a subset of $\mathbb{P}(\bigwedge^k \mathbb{R}^{d+1})$.
Thus  a $k$-dimensional linear subspace $X\in Gr(k,\mathbb{R}^{d+1})$ of $\mathbb{R}^{d+1}$ is sometimes 
referred to as a point in $\mathbb{P}(\bigwedge^k \mathbb{R}^{d+1})$ if it is clear from the context.
Also note $\mathbb{P}^d=Gr(1,\mathbb{R}^{d+1})$.

It is well-known that each point of $Gr(k,\mathbb{R}^{d+1})$ can be coordinatized by the so-called {\em Pl{\"u}cker coordinate} once we fix  a basis of $\mathbb{R}^{d+1}$. 
We shall use the standard basis ${\bm e}_1,\dots, {\bm e}_{d+1}$ of $\mathbb{R}^{d+1}$.
If a basis $\{v_1,\dots, v_k\}$ of $X\in Gr(k,\mathbb{R}^{d+1})$ is represented by $v_i=\sum_{j=1}^{d+1}p_{ij}{\bm e}_j$ 
with the $k\times (d+1)$-matrix $P=[p_{ij}]$,
then we have 
\[
v_1\wedge \dots \wedge v_k=\mbox{$\sum_{i_1<\cdots<i_k}$} \det P_{i_1,\dots, i_k} {\bm e}_{i_1}\wedge \dots \wedge {\bm e}_{i_k}, 
\]
where $P_{i_1,\dots, i_k}$ is the $k\times k$-submatrix of $P$ consisting of $i_j$-th columns.
The ratio of $\det P_{i_1,\dots, i_k}$ for $1\leq i_1< \dots < i_k\leq d+1$ is called {\em the Pl{\"u}cker coordinate} of $X$. 


\subsection{Body-bar Frameworks}
\label{subsec:body_bar}
In the context of infinitesimal rigidity, a $d$-dimensional {\em body-bar framework} is customarily denoted by a pair $(G,{\bm b})$, where
\begin{itemize}
\item $G=(V,E)$ is a graph;
\item ${\bm b}$ is a bar-configuration, that is, a mapping,
\begin{equation}
\label{eq:bar_configuration}
\begin{split}
{\bm b}:E &\rightarrow Gr(2,\mathbb{R}^{d+1}) \\
e &\mapsto [{\bm b}_e].
\end{split}
\end{equation}
\end{itemize}
Namely, each vertex corresponds to a body, and each edge corresponds to a bar connecting two bodies.
Note that for analyzing infinitesimal rigidity, we only need to know the direction of each bar, which is  specified by ${\bm b}$, (see Appendix~\ref{app:bar_description}).

An {\em infinitesimal motion} of $(G,{\bm b})$ is a mapping ${\bm m}:V\rightarrow \bigwedge^{d-1} \mathbb{R}^{d+1}$ satisfying the first-order length constraint by bars:
\begin{align}
\langle {\bm m}(u)-{\bm m}(v), {\bm b}_e\rangle &=0 && \text{ for } e=uv \in E. \label{eq:body_bar_bar_const}
\end{align}
A detailed geometric meaning of (\ref{eq:body_bar_bar_const}) is explained in Appendix~\ref{app:bar_description}.
(Detailed description can be also found in e.g.,~\cite{crapo:1982,white:whiteley:87}.) 
Since $\bigwedge^{d-1}\mathbb{R}^{d+1}$ is a $D$-dimensional real vector space,
the motion space is a linear subspace of $\mathbb{R}^{D|V|}$.
An infinitesimal motion ${\bm m}$ is called {\em trivial} if ${\bm m}(v)={\bm m}(u)$ for every $u,v\in V$.
$(G,{\bm b})$ is {\em infinitesimally rigid} if every possible motion is trivial.
$(G,{\bm b})$ is called {\em minimally infinitesimally rigid} if removing any bar results in a framework that is not infinitesimally rigid.

Tay~\cite{tay:84} proved that, for almost all bar-configurations ${\bm b}$, $(G,{\bm b})$ is minimally infinitesimally rigid  if and only if 
 $|E|=D|V|-D$ and $|F|\leq D|V(F)|-D$ for any non-empty $F\subseteq E$
or equivalently,  $G$ contains $D$ edge-disjoint spanning trees by Nash-Williams' theorem.
In the next paragraph, we shall provide an extension of this result based on rooted-tree decompositions.

\subsection{Body-bar Frameworks with Bar-boundary}
\label{subsec:body_bar_bar}
A $d$-dimensional {\em body-bar framework with bar-boundary}
is defined as a tuple $(G,\bR; {\bm b},{\bm b}^{\circ})$, where
\begin{itemize}
\item $G=(V,E)$ is a graph and $\bR$ is a multiset of vertices;
\item ${\bm b}$ is a bar-configuration given in (\ref{eq:bar_configuration});
\item ${\bm b}^{\circ}$ is a configuration of bar-boundary, that is, a mapping,
\begin{align*}
{\bm b}^{\circ}:\bR &\rightarrow Gr(2,\mathbb{R}^{d+1}) \\
r &\mapsto [{\bm b}^{\circ}_r]. 
\end{align*}
\end{itemize}
Namely, along with a conventional body-bar framework $(G,{\bm b})$, we introduce abstract signs $\bR$ of bar-boundary and its realization ${\bm b}^{\circ}$ 
in such a way that the body corresponding to a vertex $v\in V$ is linked to the fixed external environment by a bar ${\bm b}^{\circ}(r)$, for each $r\in \bR_v$.

An {\em infinitesimal motion} of $(G,\bR;{\bm b},{\bm b}^{\circ})$ is a mapping ${\bm m}:V\rightarrow \bigwedge^{d-1} \mathbb{R}^{d+1}$ satisfying not only bar-constraints (\ref{eq:body_bar_bar_const}) but also 
boundary-constraints:
\begin{equation}
\langle {\bm m}(v), {\bm b}^{\circ}_r\rangle =0 \qquad \text{ for } r\in \bR_v \text{ with } v\in V. \label{eq:body_bar_external}
\end{equation}
This condition can be obtained by setting ${\bm m}(u)=0$ in (\ref{eq:body_bar_bar_const}).
The  set of possible infinitesimal motions forms a linear subspace of $\mathbb{R}^{D|V|}$, and
$(G,\bR;{\bm b},{\bm b}^{\circ})$ is said to be {\em infinitesimally rigid} if there is no nonzero motion.
So in this case we do not allow even trivial motions.
$(G,\bR;{\bm b},{\bm b}^{\circ})$ is {\em minimally infinitesimally rigid} if removing any bar (including boundary-bar) results in a flexible framework.

Theorem~\ref{thm:body_bar_bar} below shows a  combinatorial characterization of body-bar frameworks with ``non-generic'' bar-boundary.
The proof is based on Theorem~\ref{thm:partition}, and the proof idea is from Whiteley~\cite{whiteley:88}.
\begin{theorem}
\label{thm:body_bar_bar}
Let $G=(V,E)$ be a graph, $\bR$ be a multiset of vertices,  and  ${\bm b}^{\circ}:\bR\rightarrow Gr(2,\mathbb{R}^{d+1})$.
Then, there exists a bar-configuration ${\bm b}:E\rightarrow Gr(2,\mathbb{R}^{d+1})$ such that
the body-bar framework $(G,\bR; {\bm b},{\bm b}^{\circ})$ is minimally infinitesimally rigid if and only if
\begin{itemize} 
\item $\{{\bm b}^{\circ}_r\mid r\in \bR_v\}$ is linearly independent for each $v\in V$,
\item $|F|+|\bR_F|\leq D|V(F)|-D
+{\rm dim }(\{{\bm b}^{\circ}_r\mid r\in \bR_F\})$ for any non-empty $F\subseteq E$.
\item $|E|+|\bR|=D|V|$.
\end{itemize}
\end{theorem}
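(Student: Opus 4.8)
The plan is to derive Theorem~\ref{thm:body_bar_bar} from the decomposition Theorem~\ref{thm:partition}, following Whiteley's strategy of reading a bar-configuration off a rooted-tree decomposition. First I would fix, for an arbitrary such framework, the rigidity matrix $R=R(G,\bR;{\bm b},{\bm b}^{\circ})$: it has $D|V|$ columns, grouped into $|V|$ blocks each carrying a vector of $\bigwedge^{d-1}\mathbb{R}^{d+1}\cong\mathbb{R}^{D}$, one row encoding $\langle{\bm m}(u)-{\bm m}(v),{\bm b}_e\rangle=0$ for each bar $e=uv\in E$, and one row encoding $\langle{\bm m}(v),{\bm b}^{\circ}_r\rangle=0$ for each $r\in\bR_v$. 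Since a motion is nonzero exactly when it lies in $\ker R$, the framework is infinitesimally rigid iff $\rk R=D|V|$, and minimally so iff moreover its $|E|+|\bR|$ rows are linearly independent; equivalently, iff $|E|+|\bR|=D|V|$ and the rows are independent. Throughout I let $\calM$ be the linear matroid on $\bR$ in which independence means linear independence of $\{{\bm b}^{\circ}_r\}$ in $\bigwedge^{2}\mathbb{R}^{d+1}\cong\mathbb{R}^{D}$, so $r_{\calM}(X)=\dim\{{\bm b}^{\circ}_r\mid r\in X\}$ and $r_{\calM}\le D$; the three displayed conditions are then literally (C1), (C2), (C3) for $\calM$ with $k=D$, provided $r_{\calM}=D$.

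For necessity I would assume the framework is minimally infinitesimally rigid and read each condition off row independence together with $\rk R=D|V|$. The third condition is just ``$\#$rows $=$ rank''. The first is that the $|\bR_v|$ boundary rows attached to a single vertex $v$ are independent. For the second, fix a nonempty $F\subseteq E$: the $|F|+|\bR_F|$ rows indexed by $F\cup\bR_F$ are independent and supported on the $D|V(F)|$ columns over $V(F)$, and every such row annihilates, under the natural perfect pairing on $(\bigwedge^{d-1}\mathbb{R}^{d+1})^{V(F)}$, the space $W$ of boundary-admissible trivial motions of $V(F)$, i.e.\ assignments ${\bm m}(v)\equiv w$ with $\langle w,{\bm b}^{\circ}_r\rangle=0$ for all $r\in\bR_F$: an edge row telescopes to $0$, and a boundary row $r\in\bR_v$ contributes $\langle w,{\bm b}^{\circ}_r\rangle$. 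Since $\dim W=D-r_{\calM}(\bR_F)$, non-degeneracy yields $|F|+|\bR_F|\le D|V(F)|-D+r_{\calM}(\bR_F)$, the second condition; taking $F=E$ (for $G$ connected; in general componentwise) together with the third condition also forces $r_{\calM}=D$.

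For sufficiency I would assume the three conditions. One first checks that they force $r_{\calM}=D$ (for $G$ connected this is immediate from taking $F=E$; in general one argues componentwise, exactly as in Lemma~\ref{lem:connected}, reducing to $G$ connected), so Theorem~\ref{thm:partition} applies with $k=D$ and gives a basic rooted-tree decomposition $(T_1,r_1),\dots,(T_t,r_t)$ of $(G,\bR)$ with respect to $\calM$. The construction is then to set ${\bm b}_e:={\bm b}^{\circ}_{r_i}$ whenever $e\in T_i$ (a legal bar-configuration, since ${\bm b}^{\circ}_{r_i}\in Gr(2,\mathbb{R}^{d+1})$). To verify minimal infinitesimal rigidity, since $|E|+|\bR|=D|V|$ it suffices to show $\ker R=0$: for ${\bm m}\in\ker R$, the boundary row at $r_i$ forces $\langle{\bm m}(r_i),{\bm b}^{\circ}_{r_i}\rangle=0$, each edge row of $T_i$ forces $\langle{\bm m}(u),{\bm b}^{\circ}_{r_i}\rangle=\langle{\bm m}(v),{\bm b}^{\circ}_{r_i}\rangle$, and since $T_i$ is connected and contains $r_i$ this propagates to $\langle{\bm m}(v),{\bm b}^{\circ}_{r_i}\rangle=0$ for every $v\in V(T_i,r_i)$ (trivially so when $T_i=\emptyset$); then for each fixed $v$, basicness makes $\{{\bm b}^{\circ}_{r_i}\mid v\in V(T_i,r_i)\}$ a basis of $\bigwedge^{2}\mathbb{R}^{d+1}$, so ${\bm m}(v)=0$ by perfectness of the pairing, whence ${\bm m}\equiv0$.

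I expect essentially all of the depth to be absorbed into Theorem~\ref{thm:partition}, which is invoked as a black box; what remains is linear algebra. The one genuinely load-bearing idea is the bar-assignment ${\bm b}_e:={\bm b}^{\circ}_{r_i}$ on each tree $T_i$, which converts the combinatorial ``basicness'' of the decomposition directly into the algebraic non-degeneracy of $R$. Verifying $r_{\calM}=D$ and the reduction to the connected case parallels Lemma~\ref{lem:connected} and should be routine, as should the dimension-counting step ($W\perp\operatorname{row}(R)$) in the necessity direction.
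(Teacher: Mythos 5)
Your proposal is correct and follows essentially the same route as the paper: define the linear matroid $\calM$ represented by $\{{\bm b}^{\circ}_r\}$, deduce $r_{\calM}=D$ from the counting conditions, invoke Theorem~\ref{thm:partition} to get a basic rooted-tree decomposition, set ${\bm b}_e={\bm b}^{\circ}_{r_i}$ on $T_i$, and telescope along tree paths to kill any motion; the necessity direction is likewise the paper's dimension count, merely rephrased via the annihilator of the space of boundary-admissible constant motions on $V(F)$ rather than by counting independent motions directly.
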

\begin{proof}
(``If''-part:)
Suppose $G$ satisfies the above counting conditions.
Let ${\cal M}$ be a linear matroid on $R$ represented by vectors ${\bm b}^{\circ}_r \ (r\in \bR)$, and $k$ denotes the rank of ${\cal M}$.
Let us first check $k=D$. Since $\dim (\bigwedge^2 \mathbb{R}^{d+1})=D$, we have $k\leq D$. 
On the other hand, from the counting condition, we have $D|V|=|E|+|\bR|\leq D|V|-D+k$, implying $k=D$.  

Thus, by Theorem~\ref{thm:partition}, 
$E$ admits a basic rooted-tree decomposition $\{(T_1,r_1), \dots, (T_t,r_t)\}$ with respect to ${\cal M}$.
We define a bar-configuration ${\bm b}$ on $E$ by
\[
 {\bm b}(e)={\bm b}^{\circ}(r_i) \qquad \text{for } e\in T_i.
\]
Let us check that $(G,\bR;{\bm b},{\bm b}^{\circ})$ is indeed infinitesimally rigid.

Let ${\bm m}$ be an arbitrary infinitesimal motion, and let us show ${\bm m}(v)=0$ for each  $v\in V$.
There are exactly $D$ rooted-trees that span $v$,  and without loss of generality we denote them by $(T_{1},r_{1}),\dots, (T_{D},r_{D})$.
For each $1\leq i\leq D$, $T_{i}$ contains a unique path $r_{i}=v_1,v_2,\dots, v_s=v$
from $r_{i}$ to $v$.
Hence, by (\ref{eq:body_bar_bar_const}) and (\ref{eq:body_bar_external}),
we have $\langle {\bm m}(v_j)-{\bm m}(v_{j+1}), {\bm b}^{\circ}_{r_{i}}\rangle =0$ for $j=1,\dots, s$ and ${\langle \bm m}(v_1), {\bm b}^{\circ}_{r_{i}}\rangle =0$.
Summing up these equations, we obtain  $\langle {\bm m}(v_s), {\bm b}^{\circ}_{r_i}\rangle =\langle {\bm m}(v), {\bm b}^{\circ}_{r_{i}}\rangle =0$ for each $1\leq i\leq D$.
Since the decomposition is basic with respect to ${\cal M}$, $\{{\bm b}^{\circ}_{r_{i}}\mid 1\leq i\leq D\}$ is linearly independent.
We thus obtain  ${\bm m}(v)=0$ for every $v\in V$.
In other words $(G,R,{\bm b},{\bm b}^{\circ})$ is infinitesimally rigid.
The minimality is straightforward because the space of ${\bm m}$ is $D|V|$-dimensional while there are only $D|V|$ linear equations (\ref{eq:body_bar_bar_const}) (\ref{eq:body_bar_external}) by the third condition.

\medskip

(``Only-if''-part:)
Because of the minimality, the first condition is clearly necessary.
Since the space of a mapping $V\rightarrow \bigwedge^2\mathbb{R}^{d+1}$ is $D|V|$-dimensional, the third condition is also necessary for the minimal rigidity.
To see the second condition, consider the sub-framework induced by $F\subseteq E$, that is, a realization of a graph $((V,F),\bR_F)$ with roots.
Then, clearly, this sub-framework has $D|V\setminus V(F)|$ independent motions 
since each body associated with $v\in V\setminus V(F)$ has no connection to the other bodies in this sub-framework.
We also have at least $D-\dim (\{{\bm b}^{\circ}(r)\mid r\in \bR_F\})$ independent motions  since the component consisting of the bodies of $V(F)$ has independent motions in the orthogonal complement of 
$\{{\bm b}^{\circ}_r\mid r\in \bR_F\}$.
Thus the number of independent linear equations in the sub-framework is upper bounded by $D|V|-D|V\setminus V(F)|-(D-\dim (\{ {\bm b}^{\circ}(r)\mid r\in \bR_F \}))=D|V(F)|-D+\dim (\{ {\bm b}^{\circ}(r)\mid r\in \bR_F \})$,
and the second condition is necessary for minimality.
\end{proof}

Combining Theorem~\ref{thm:dual} and Theorem~\ref{thm:body_bar_bar}, we immediately obtain the dual form.
\begin{corollary}
\label{cor:body_bar_dual}
Let $G$ be a graph, $\bR$ be a multiset of vertices, and ${\bm b}^{\circ}:\bR\rightarrow Gr(2,\mathbb{R}^{d+1})$.
Then, there exists a bar-configuration ${\bm b}:\bR\rightarrow Gr(2,\mathbb{R}^{d+1})$ such that 
the body-bar framework $(G,\bR;{\bm b},{\bm b}^{\circ})$ is infinitesimally rigid if and only if 
\begin{equation}
|\delta_G({\cal P})|\geq D|{\cal P}|-\sum_{X\in {\cal P}} \dim(\{{\bm b}^{\circ}_r\mid r\in \bR_X\})
\end{equation}
for every  partition ${\cal P}$ of $V$.
\end{corollary}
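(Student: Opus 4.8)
The plan is to read Corollary~\ref{cor:body_bar_dual} as the rigidity counterpart of Theorem~\ref{thm:dual}, in exactly the way Theorem~\ref{thm:body_bar_bar} is the rigidity counterpart of Theorem~\ref{thm:partition}. Throughout, let $\calM$ be the linear matroid on the multiset $\bR$ represented by the vectors $\{{\bm b}^{\circ}_r\mid r\in\bR\}\subseteq\bigwedge^2\mathbb{R}^{d+1}$; then $r_{\calM}(\bR_X)=\dim(\{{\bm b}^{\circ}_r\mid r\in\bR_X\})$ for every $X\subseteq V$, and the rank of $\calM$ equals $\dim(\{{\bm b}^{\circ}_r\mid r\in\bR\})\le D$. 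A first observation is that applying the stated inequality to the trivial partition ${\cal P}=\{V\}$ gives $0=|\delta_G(\{V\})|\ge D-\dim(\{{\bm b}^{\circ}_r\mid r\in\bR\})$, so $\calM$ has rank exactly $D$ whenever the counting condition holds; consequently, for this $\calM$ with $k=D$, inequality (\ref{eq:dual}) of Theorem~\ref{thm:dual} is literally the hypothesis of the corollary.

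For the ``only if'' direction I would argue geometrically and by contraposition, showing that if some partition violates the inequality then \emph{no} bar-configuration makes the framework rigid. Fix any ${\bm b}$ and a partition ${\cal P}=\{X_1,\dots,X_p\}$ with $|\delta_G({\cal P})|<Dp-\sum_{j}\dim(\{{\bm b}^{\circ}_r\mid r\in\bR_{X_j}\})$. Delete the crossing bars $\delta_G({\cal P})$; the resulting framework splits into independent pieces, one on each $X_j$, and the piece on $X_j$ admits every constant motion ${\bm m}(v)=w$ ($v\in X_j$) with $w$ orthogonal to $\{\ast{\bm b}^{\circ}_r\mid r\in\bR_{X_j}\}$, a space of dimension $D-\dim(\{{\bm b}^{\circ}_r\mid r\in\bR_{X_j}\})$. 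Hence the motion space of the trimmed framework has dimension at least $\sum_j(D-\dim(\{{\bm b}^{\circ}_r\mid r\in\bR_{X_j}\}))=Dp-\sum_j\dim(\{{\bm b}^{\circ}_r\mid r\in\bR_{X_j}\})>|\delta_G({\cal P})|$; reimposing the $|\delta_G({\cal P})|$ deleted bar-constraints lowers this dimension by at most $|\delta_G({\cal P})|$, so the full framework still has a nonzero infinitesimal motion.

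For the ``if'' direction I would feed the hypothesis into Theorem~\ref{thm:dual} and then reuse the argument from the ``if'' part of Theorem~\ref{thm:body_bar_bar}. By the first observation, Theorem~\ref{thm:dual} (applied to $\calM$ with $k=D$) yields a decomposition of $(G,\bR)$ into rooted-components $(C_1,r_1),\dots,(C_t,r_t)$ such that $\{r_i\mid v\in V(C_i,r_i)\}$ spans $\calM$ for every $v\in V$. Define ${\bm b}(e)={\bm b}^{\circ}(r_i)$ for every $e\in C_i$ (well defined by edge-disjointness). Given an infinitesimal motion ${\bm m}$ of $(G,\bR;{\bm b},{\bm b}^{\circ})$ and a vertex $v$: for each $i$ with $v\in V(C_i,r_i)$, choose a path from $r_i$ to $v$ inside $C_i$ (with the degenerate case $v=r_i$ when $C_i=\emptyset$), telescope the bar-constraints (\ref{eq:body_bar_bar_const}) along it together with the boundary-constraint (\ref{eq:body_bar_external}) at $r_i$, and obtain $\langle{\bm m}(v),{\bm b}^{\circ}_{r_i}\rangle=0$. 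Since the ${\bm b}^{\circ}_{r_i}$ occurring here span $\bigwedge^2\mathbb{R}^{d+1}$, their Hodge duals span $\bigwedge^{d-1}\mathbb{R}^{d+1}$, forcing ${\bm m}(v)=0$; thus $(G,\bR;{\bm b},{\bm b}^{\circ})$ is infinitesimally rigid.

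Essentially all steps are routine given Theorems~\ref{thm:dual} and~\ref{thm:body_bar_bar}. The only delicate point, and the one I would be most careful about, is the dimension bookkeeping in the ``only if'' part: checking that the constant motions genuinely contribute $D-\dim(\{{\bm b}^{\circ}_r\mid r\in\bR_{X_j}\})$ independent motions to each block, and that reinstating the crossing bars drops the total by at most $|\delta_G({\cal P})|$. I do not expect any serious obstacle beyond this.
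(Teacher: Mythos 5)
Your proof is correct and is essentially the argument the paper intends: the paper dismisses the corollary as ``immediate from combining Theorem~\ref{thm:dual} and Theorem~\ref{thm:body_bar_bar},'' and you have simply filled in that combination in the natural way (Theorem~\ref{thm:dual} supplies the rooted-component decomposition, the telescoping argument from the ``if'' part of Theorem~\ref{thm:body_bar_bar} gives sufficiency, and the block-wise dimension count mirrors the necessity argument there). Your preliminary observation that the trivial partition $\{V\}$ forces $r_{\calM}=D$, so that the hypothesis coincides with (\ref{eq:dual}) for $k=D$, is exactly the right bridge.
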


\noindent
{\bf Remark.}
The set of bar-configurations ${\bm b}$ for which the dimension of motion space is minimized forms a dense subset of the set of all possible bar-configurations ${\bm b}$, see, e.g., \cite{whiteley:88,tanigawa2010}. 
Theorem~\ref{thm:body_bar_bar} (resp.~Corollary~\ref{cor:body_bar_dual}) hence implies
the necessary and sufficient condition for the infinitesimal rigidity of $(G,\bR;{\bm b},{\bm b}^{\circ})$ 
for almost all bar-configurations ${\bm b}$.

\subsection{Pinned Body-bar Frameworks}
\label{subsec:body_bar_pin}
A $d$-dimensional {\em pinned body-bar framework} is defined as $(G,\bR; {\bm b},{\bm p}^{\circ})$, where
\begin{itemize}
\item $G=(V,E)$ is a graph and $\bR$ is a multiset of vertices;
\item ${\bm b}$ is a bar-configuration given in (\ref{eq:bar_configuration});
\item ${\bm p}^{\circ}:\bR\rightarrow \mathbb{R}^{d}$ is a configuration of pin-boundary.
\end{itemize}
Namely, $\bR$ denotes abstract signs of pinning and their positions are specified by ${\bm p}^{\circ}$ 
in such a way that the body corresponding to a vertex $v\in V$ is pinned at $\bp^{\circ}(r)$ for each $r\in \bR_v$.
(Note that each body may be pinned at more than one point.)



An {\em infinitesimal motion} of $(G,\bR;{\bm b},{\bm p}^{\circ})$ is a mapping ${\bm m}:V\rightarrow \bigwedge^{d-1}\mathbb{R}^{d+1}$ satisfying bar-constraints (\ref{eq:body_bar_bar_const})
and pin-boundary constraints, which can be written by, for each $v\in V$  and $r\in \bR_v$,
\begin{equation*}
\langle {\bm m}(v), (\bp^{\circ}(r), 1) \wedge (q,1)\rangle=0 \qquad \text{ for any } q\in \mathbb{R}^d
\end{equation*}
where (the ratio of) $(\bp^{\circ}(r), 1) \wedge (q,1)$ corresponds to the Pl{\"u}cker coordinate of the line passing through $\bp^{\circ}(r)$ and $q$.
This definition is justified by observing that pinning a body at a point $p$ is equivalent to linking $p$ by bars with the fixed external environment.
Thus, pinned body-bar frameworks can be considered as a special case of body-bar frameworks with bar-boundary.
As before, $(G,\bR;{\bm b},{\bm p}^{\circ})$ is said to be {\em infinitesimally rigid} if there is no nonzero motion.
It is now straightforward to derive the following combinatorial characterization due to Corollary~\ref{cor:body_bar_dual}. 
\begin{theorem}
\label{thm:body_bar}
Let $(G,\bR)$ be a graph with roots, and let ${\bm p}^{\circ}:\bR\rightarrow \mathbb{R}^d$.
Then, there exists ${\bm b}:E\rightarrow Gr(2,\mathbb{R}^{d+1})$ such that
$(G,\bR; {\bm b},{\bm p}^{\circ})$ is infinitesimally rigid if and only if
\begin{equation}
|\delta_G({\cal P})|\geq D|{\cal P}|-\sum_{X\in {\cal P}, \bR_X\neq\emptyset} \sum_{i=1}^{d_X+1} (d-i+1)
\end{equation}
for every  partition ${\cal P}$ of the vertex set $V$, 
where $d_X$ denotes the dimension of the affine span of ${\bm p}^{\circ}(\bR_X)$.
%
\end{theorem}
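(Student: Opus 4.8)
The plan is to realize a pinned body-bar framework as a body-bar framework with bar-boundary and then invoke Corollary~\ref{cor:body_bar_dual}, exactly as anticipated by the observation that pinning a body at a point amounts to attaching boundary-bars through that point. Given ${\bm p}^{\circ}:\bR\to\mathbb{R}^d$, for $r\in\bR$ set
\[
W_r=\{({\bm p}^{\circ}(r),1)\wedge q\mid q\in\mathbb{R}^{d+1}\},
\]
a $d$-dimensional subspace of $\bigwedge^2\mathbb{R}^{d+1}$ all of whose elements are decomposable (they are exactly the Pl\"ucker coordinates of the lines through ${\bm p}^{\circ}(r)$); note $\mathrm{span}\{({\bm p}^{\circ}(r),1)\wedge(q,1)\mid q\in\mathbb{R}^d\}=W_r$, since $\{(q,1)\mid q\in\mathbb{R}^d\}$ linearly spans $\mathbb{R}^{d+1}$. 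I would form a new root multiset $\bR'$ by replacing each $r\in\bR_v$ with $d$ roots $r^1,\dots,r^d$ attached to the same vertex $v$, and choose ${\bm b}^{\circ}(r^1),\dots,{\bm b}^{\circ}(r^d)$ to be a basis of $W_r$ consisting of decomposable vectors (for instance $[({\bm p}^{\circ}(r),1)\wedge {\bm e}_j]$ for $j=1,\dots,d$). Because the pairing $\bigwedge^{d-1}\mathbb{R}^{d+1}\times\bigwedge^2\mathbb{R}^{d+1}\to\mathbb{R}$ is nondegenerate, the $d$ equations $\langle{\bm m}(v),{\bm b}^{\circ}(r^j)\rangle=0$, $j=1,\dots,d$, are jointly equivalent to $\langle{\bm m}(v),w\rangle=0$ for all $w\in W_r$, that is, to the single pin-constraint at $r$. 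Hence, for any fixed bar-configuration ${\bm b}:E\to Gr(2,\mathbb{R}^{d+1})$, the frameworks $(G,\bR;{\bm b},{\bm p}^{\circ})$ and $(G,\bR';{\bm b},{\bm b}^{\circ})$ have identical infinitesimal-motion spaces, and so one is infinitesimally rigid if and only if the other is.

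Applying Corollary~\ref{cor:body_bar_dual} to $(G,\bR';{\bm b},{\bm b}^{\circ})$, there is a bar-configuration ${\bm b}$ making $(G,\bR;{\bm b},{\bm p}^{\circ})$ infinitesimally rigid if and only if
\[
|\delta_G({\cal P})|\geq D|{\cal P}|-\sum_{X\in{\cal P}}\dim\bigl(\{{\bm b}^{\circ}_r\mid r\in\bR'_X\}\bigr)
\]
for every partition ${\cal P}$ of $V$. Thus the theorem follows once I establish
\[
\dim\bigl(\{{\bm b}^{\circ}_r\mid r\in\bR'_X\}\bigr)=\sum_{i=1}^{d_X+1}(d-i+1)\qquad\text{whenever }\bR_X\neq\emptyset,
\]
the case $\bR_X=\emptyset$ being vacuous since then $\bR'_X=\emptyset$.

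For this identity I would argue that $\{{\bm b}^{\circ}_r\mid r\in\bR'_X\}$ spans $\sum_{r\in\bR_X}W_r=U_X\wedge\mathbb{R}^{d+1}$, where $U_X=\mathrm{span}\{({\bm p}^{\circ}(r),1)\mid r\in\bR_X\}\subseteq\mathbb{R}^{d+1}$ has dimension $d_X+1$ by the definition of $d_X$. Taking a basis of $U_X$ and extending it to a basis of $\mathbb{R}^{d+1}$, the space $U_X\wedge\mathbb{R}^{d+1}$ is spanned by the wedges of pairs of distinct basis vectors in which at least one factor lies in $U_X$, and these are linearly independent, so
\[
\dim\bigl(U_X\wedge\mathbb{R}^{d+1}\bigr)=\binom{d_X+1}{2}+(d_X+1)(d-d_X),
\]
and a one-line computation shows this equals $\tfrac12(d_X+1)(2d-d_X)=\sum_{i=1}^{d_X+1}(d-i+1)$. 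Substituting back into the displayed inequality gives exactly the condition claimed.

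The one place that requires care is the reduction step: one must verify precisely that a set of boundary-bars spanning $W_r$ imposes neither more nor fewer linear constraints than the pin at ${\bm p}^{\circ}(r)$ (so that the two motion spaces genuinely coincide), which rests on $W_r$ being $d$-dimensional and on the nondegeneracy of the exterior pairing. I do not expect any real obstacle beyond being careful with these exterior-algebra facts; the remaining dimension count is elementary and all of the combinatorics is delegated to Corollary~\ref{cor:body_bar_dual}.
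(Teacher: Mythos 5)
Your proposal is correct and follows essentially the same route as the paper: replace each pin by boundary-bars through the pin point, invoke Corollary~\ref{cor:body_bar_dual}, and compute $\dim(U_X\wedge\mathbb{R}^{d+1})=\sum_{i=1}^{d_X+1}(d-i+1)$. You simply supply the details (the explicit decomposable basis of $W_r$ and the verification that the motion spaces coincide) that the paper leaves as a one-line "recall."
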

\begin{proof}
Recall that, 
for any set $P$ of points, the dimension of the linear span of 
$\bigcup_{p\in P} \{(p,1)\wedge (q,1) \mid  q\in \mathbb{R}^d\}$ is equal to $\sum_{i=1}^{d_P+1} (d-i+1)$,
where $d_P$ denotes the dimension of the affine span of $P$.
Since pinning a body at a point $p\in \mathbb{R}^d$ is equivalent to adding bar-constraints between  $p$ and the external environment, the statement directly follows from Corollary~\ref{cor:body_bar_dual}.
\end{proof}

\section{Bar-joint Frameworks with Boundary}
\label{sec:bar_joint}
We now proceed to the rigidity of 2-dimensional bar-joint frameworks.
As in the previous section we first review frameworks without boundary, and then move to models with boundary.
 
\subsection{2-dimensional Bar-joint Frameworks}
\label{subsec:bar_joint}
For a graph $G=(V,E)$,  an injective mapping $\bp:V\rightarrow \mathbb{R}^2$ is called a {\em joint-configuration}.
A {\em 2-dimensional bar-joint framework} is defined as a pair $(G,{\bm p})$ of a graph $G=(V,E)$ and a joint-configuration $\bp$.
An infinitesimal motion of the framework is customarily defined by a mapping $\bmm:V\rightarrow \mathbb{R}^2$ such that 
\begin{equation}
\label{eq:bar_joint_motion1}
\langle \bp(u)-\bp(v), \bmm(u)-\bmm(v)\rangle =0  \qquad \text{for } uv\in E.
\end{equation}

Note that an infinitesimal isometry of $\mathbb{R}^2$ induces a nonzero motion of $(G,\bp)$ by restricting it to the joint set.
Such an infinitesimal motion is called {\em trivial}. 
$(G,{\bm p})$ is said to be {\em infinitesimally rigid} if every motion is trivial.
An infinitesimally rigid framework $(G,{\bm p})$ is {\em minimally infinitesimally rigid} if the framework is not infinitesimally rigid after removing any edge.

Instead of this familiar notation, we shall introduce a different (but, of course, equivalent) definition of the infinitesimal rigidity of 2-dimensional bar-joint frameworks, 
used in \cite{tay1991linking,tay:89,tanigawa2010}.
Namely, we shall define bar-joint frameworks in terms of the body-bar model, where  
each bar-joint framework is considered as a special case of body-bar frameworks by regarding each joint as a ``0-dimensional'' body.
Notice that $\bp$ determines a mapping  
${\bm b}:E\rightarrow Gr(2,\mathbb{R}^{3})$ by ${\bm b}_{uv}=(\bp(u),1)\wedge (\bp(v),1)$ for $uv\in E$.
Then $(G,\bp)$ is equivalent to the 2-dimensional body-bar framework $(G,{\bm b})$, which satisfies a special incidence condition between $\bp$ and ${\bm b}$:
\begin{equation}
\label{eq:bar_joint_incidence}
\langle (\bp(v),1), {\bm b}_e\rangle =0 \qquad \text{if $e\in E$ is incident to $v\in V$}.
\end{equation}
$(G,{\bm b})$ is said to be the body-bar framework {\em derived from} the bar-joint framework $(G,\bp)$.

Recall that an infinitesimal motion of a 2-dimensional body-bar framework $(G,{\bm b})$ is a mapping $\bmm:V\rightarrow \bigwedge^2\mathbb{R}^3$ satisfying bar-constraints (\ref{eq:body_bar_bar_const}).
$\bmm$ is always a motion of $(G,{\bm b})$ if $\bmm(u)=\bmm(v)$ for $u,v\in V$, and such motions are called  trivial.
If $(G,{\bm b})$ satisfies incidence condition (\ref{eq:bar_joint_incidence}), $(G,{\bm b})$ always has additional $|V|$ independent motions:
for each $v\in V$ define $\bmm_v$ by $\bmm_v(u)=0$ for $u\in V\setminus \{v\}$ and $\bmm_v(v)=(\bp(v),1)$; then $\bmm_v$ satisfies (\ref{eq:body_bar_bar_const}) by (\ref{eq:bar_joint_incidence}).
Such ${\bm m}_v$ is called a {\em trivial dangling (around $v$)}. 
A body-bar framework $(G,{\bm b})$ is said to be {\em bar-joint-rigid} if every infinitesimal motion is a linear combination of trivial motions and trivial danglings.

\begin{proposition}
\label{prop:equivalence}
A 2-dimensional bar-joint framework $(G,\bp)$ is infinitesimally rigid if and only if  the body-bar framework $(G,{\bm b})$ derived from $(G,\bp)$ is bar-joint-rigid.
\end{proposition}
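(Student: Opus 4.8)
The plan is to build an explicit surjective linear map $\Phi$ from the space of infinitesimal motions of the body-bar framework $(G,\bm b)$ onto the space of infinitesimal motions of the bar-joint framework $(G,\bp)$, whose kernel is exactly the span of the trivial danglings, and then to conclude by a short diagram chase. The map $\Phi$ is ``evaluation at the joints'': a screw $\bm w$ in the $3$-dimensional motion space induces at a point $p\in\mathbb{R}^2$ a velocity $\mathrm{ev}_p(\bm w)\in\mathbb{R}^2$ characterized by $\langle \mathrm{ev}_p(\bm w),q\rangle=\langle \bm w,(p,1)\wedge(q,0)\rangle$ for all $q\in\mathbb{R}^2$ (the standard correspondence between screws and point velocities; cf.\ Appendix~\ref{app:bar_description} and \cite{tay1991linking,tanigawa2010}), and I set $\Phi(\bm m)(v)=\mathrm{ev}_{\bp(v)}(\bm m(v))$. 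Each $\mathrm{ev}_{\bp(v)}$ is onto $\mathbb{R}^2$ with one-dimensional kernel, and a direct computation identifies that kernel as $\mathbb{R}\cdot(\bp(v),1)$ --- the infinitesimal rotation of body $v$ about its own joint, i.e.\ precisely the trivial dangling $\bm m_v$.

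Everything hinges on the algebraic identity
\[
\langle \bm m(u)-\bm m(v),\bm b_{uv}\rangle=-\langle \Phi(\bm m)(u)-\Phi(\bm m)(v),\ \bp(u)-\bp(v)\rangle\qquad(uv\in E),
\]
which I would prove by rewriting $\bm b_{uv}=(\bp(u),1)\wedge(\bp(v),1)$ as both $(\bp(u),1)\wedge(\bp(v)-\bp(u),0)$ and $(\bp(v),1)\wedge(\bp(v)-\bp(u),0)$ and then invoking the defining property of $\mathrm{ev}$. This identity gives simultaneously that (i) $\bm m$ satisfies the bar-constraints (\ref{eq:body_bar_bar_const}) if and only if $\Phi(\bm m)$ satisfies the bar-joint constraints (\ref{eq:bar_joint_motion1}), so $\Phi$ does map motions to motions; (ii) $\Phi$ is surjective, since for a bar-joint motion $\bm n$ the assignment sending each $v$ to the translational screw with velocity $\bm n(v)$ is a body-bar motion mapped to $\bm n$; and (iii) the kernel of $\Phi$ on the motion space is $\{\bm m:\bm m(v)\in\mathbb{R}(\bp(v),1)\ \forall v\}$, because for such $\bm m$ the bar-equations hold automatically by the identity with $\Phi(\bm m)\equiv 0$, and this set is exactly the span of the trivial danglings. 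Finally $\Phi$ carries the trivial body-bar motions (all bodies sharing one screw) onto the trivial bar-joint motions (restrictions of ambient infinitesimal isometries), straight from the definitions.

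Granting these, the equivalence is immediate. If $(G,\bp)$ is infinitesimally rigid, then for any body-bar motion $\bm m$ the motion $\Phi(\bm m)$ is trivial, hence equals $\Phi(\bm m_0)$ for some trivial body-bar motion $\bm m_0$; then $\bm m-\bm m_0\in\ker\Phi$ is a combination of trivial danglings, so $\bm m$ is a linear combination of trivial motions and danglings and $(G,\bm b)$ is bar-joint-rigid. Conversely, if $(G,\bm b)$ is bar-joint-rigid, then any bar-joint motion $\bm n$ lifts by surjectivity to a body-bar motion $\bm m=\bm m_0+\sum_v\lambda_v\bm m_v$ with $\bm m_0$ trivial, and applying $\Phi$ yields $\bm n=\Phi(\bm m_0)$, a trivial bar-joint motion; hence $(G,\bp)$ is infinitesimally rigid.

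I expect the only place care is needed to be the Grassmann-algebra bookkeeping behind the displayed identity and the kernel computation --- keeping straight which exterior power the motions and the bar-coordinates live in and the Hodge duality folded into the pairing $\langle\cdot,\cdot\rangle$ --- but this is routine and is already spelled out in Appendix~\ref{app:bar_description}, so there is no genuine obstacle. Injectivity of $\bp$ is used only to guarantee that distinct joints occupy distinct points, so that the spaces of trivial motions on the two sides correspond under $\Phi$; small cases such as $|V|\le 1$ are covered by the same chase.
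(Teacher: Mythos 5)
Your argument is correct and is precisely a worked-out version of the paper's one-line proof: the evaluation map $\Phi$, with kernel the rotations about the joints (the trivial danglings) and image generated by the translational screws, is exactly the formalization of the statement that any infinitesimal motion of a body is a linear combination of a rotation about $\bp(v)$ and translations. The identity relating the two bar-pairings and the ensuing diagram chase check out, so this is the same approach, just made explicit.
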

\begin{proof}
This is immediate from the fact that any infinitesimal motion of a body can be described as a linear combination of an infinitesimal rotation around a point $p$ in the body and translations of $p$.
\end{proof}
  
Notice that in the above discussion we only require homogeneous coordinates of joints when constructing the derived body-bar frameworks.
We can thus naturally extend the notion of bar-joint frameworks to the projective plane,
whose rigidity is defined in terms of the derived body-bar frameworks. 
We also remark that bar-joint frameworks in the real projective space can be equivalently defined in terms of static rigidity, see, e.g.,~\cite{crapo:1982,whiteley:83:cone}.

\subsection{Bar-joint Frameworks with Bar-boundary}
\label{subsec:bar_joint_bar}
A {\em 2-dimensional bar-joint framework with bar-boundary} is a tuple $(G,\bR;{\bm p},{\bm b}^{\circ})$ such that
\begin{itemize}
\item $G=(V,E)$ is a graph and $\bR$ is a multiset of vertices;
\item $\bp:V\rightarrow \mathbb{R}^2$ is a joint-configuration;
\item ${\bm b}^{\circ}:\bR\rightarrow Gr(2,\mathbb{R}^3)$ is a configuration of bar-boundary, which must satisfy incidence condition,
\begin{equation*}
\langle ({\bm p}(v),1), {\bm b}^{\circ}_r\rangle =0 \qquad \text{if } r\in \bR_v.
\end{equation*}
\end{itemize}
Namely, as in the body-bar case,  we have introduced abstract signs $\bR$ of bar-boundary and their realization ${\bm b}^{\circ}$,
where ${\bm b}^{\circ}_r$ denotes the Pl{\"u}cker coordinate of a bar connecting joint ${\bm p}(v)$ and the external environment for each $r\in \bR_v$.

Following the conventional definition, an infinitesimal motion of $(G,\bR;{\bm p},{\bm b}^{\circ})$ is defined as a mapping $\bmm:V\rightarrow \mathbb{R}^2$ 
satisfying bar-constraints (\ref{eq:bar_joint_motion1}) as well as bar-boundary constraints: for each $v\in V$,
\begin{equation*}
\langle (\bmm(v),0), {\bm b}^{\circ}_r\rangle =0 \qquad \text{for } r\in \bR_v,
\end{equation*}
and $(G,\bR;{\bm p},{\bm b}^{\circ})$ is said to be infinitesimally rigid if there is no nonzero motion.

Let us rewrite this notion in terms of the body-bar model, again.
As in the previous subsection, ${\bp}$ determines the bar-configuration ${\bm b}:E\rightarrow Gr(2,\mathbb{R}^3)$ by ${\bm b}_{uv}=(p(u),1)\wedge (p(v),1)$,
and thus {\em the body-bar framework with bar-boundary} $(G,\bR;{\bm b},{\bm b}^{\circ})$ is  {\em derived from} $(G,\bR;\bp,{\bm b}^{\circ})$.
In general, a body-bar framework with bar-boundary $(G,\bR;{\bm b},{\bm b}^{\circ})$ is said to be {\em bar-joint-rigid} if every possible motion is a linear combination of trivial danglings.
As in the previous subsection we have the following:
\begin{proposition}
\label{prop:equivalence2}
A 2-dimensional bar-joint framework $(G,\bR;\bp,{\bm b}^{\circ})$ with bar-boundary  is infinitesimally rigid 
if and only if  the body-bar framework with bar-boundary derived from $(G,\bR;\bp,{\bm b}^{\circ})$ is bar-joint-rigid.
\end{proposition}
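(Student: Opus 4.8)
The plan is to reuse the argument behind Proposition~\ref{prop:equivalence} essentially verbatim, since the only genuinely new ingredient is checking that the translation between the two models also respects the bar-boundary constraints. Write $(G,\bR;{\bm b},{\bm b}^{\circ})$ for the body-bar framework with bar-boundary derived from $(G,\bR;\bp,{\bm b}^{\circ})$, so that ${\bm b}_{uv}=(\bp(u),1)\wedge(\bp(v),1)$. I would introduce the linear map $\Phi$ sending a candidate body-bar motion $\bmm\colon V\to\bigwedge^{1}\mathbb{R}^{3}$ to the plane velocity field $v\mapsto\Phi(\bmm)(v)\in\mathbb{R}^{2}$, where $\Phi(\bmm)(v)$ is the velocity that the twist $\bmm(v)$ assigns to the point currently at $\bp(v)$. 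The key structural fact I would invoke (the exterior-algebra dictionary already used for Proposition~\ref{prop:equivalence}, cf.~\cite{crapo:1982}) is that every twist decomposes as $\bmm(v)=c_{v}(\bp(v),1)+(\Phi(\bmm)(v),0)$, a rotation about $\bp(v)$ plus a translation with velocity $\Phi(\bmm)(v)$.

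The steps, in order. First, I would recall from the proof of Proposition~\ref{prop:equivalence} that $\langle\bmm(u)-\bmm(v),{\bm b}_{uv}\rangle=0$ is equivalent to $\langle\bp(u)-\bp(v),\Phi(\bmm)(u)-\Phi(\bmm)(v)\rangle=0$, so $\Phi$ matches the bar-constraints (\ref{eq:body_bar_bar_const}) of the derived framework with the bar-constraints (\ref{eq:bar_joint_motion1}) of the bar-joint framework. Second---the new point---I would use the decomposition above together with the incidence condition $\langle(\bp(v),1),{\bm b}^{\circ}_{r}\rangle=0$ for $r\in\bR_{v}$ to conclude $\langle\bmm(v),{\bm b}^{\circ}_{r}\rangle=\langle(\Phi(\bmm)(v),0),{\bm b}^{\circ}_{r}\rangle$, so that the body-bar boundary constraint at $v$ holds exactly when the bar-joint boundary constraint holds for $\Phi(\bmm)$. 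Third, I would observe that $\bmm'\mapsto(\bmm'(\cdot),0)$ is a section of $\Phi$ landing in genuine body-bar-with-bar-boundary motions whenever $\bmm'$ is a bar-joint-with-bar-boundary motion, so $\Phi$ maps the motion space of $(G,\bR;{\bm b},{\bm b}^{\circ})$ onto that of $(G,\bR;\bp,{\bm b}^{\circ})$. Fourth, I would identify $\ker\Phi$: $\Phi(\bmm)=0$ forces $\bmm(v)=c_{v}(\bp(v),1)$, i.e.\ $\bmm$ is a linear combination of the trivial danglings $\bmm_{v}$, and conversely each such $\bmm$ satisfies all bar- and boundary-constraints (again using incidence).

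Putting these together, the motion space of the derived body-bar-with-bar-boundary framework is the internal direct sum of the span of the trivial danglings (which is $\ker\Phi$) and a subspace mapped isomorphically by $\Phi$ onto the motion space of $(G,\bR;\bp,{\bm b}^{\circ})$. Hence every motion of the derived framework is a linear combination of trivial danglings if and only if the bar-joint framework with bar-boundary admits only the zero motion, which is precisely the asserted equivalence. I do not expect a real obstacle here: the whole content is the exterior-algebra dictionary between twists and plane velocities, which is standard and was already used for Proposition~\ref{prop:equivalence}; the mild extra care is only to see that the rotational component $c_{v}(\bp(v),1)$ is invisible to ${\bm b}^{\circ}_{r}$, which is exactly what the incidence condition on ${\bm b}^{\circ}$ guarantees.
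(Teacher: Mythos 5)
Your argument is correct and is essentially the paper's own (the paper states Proposition~\ref{prop:equivalence2} with no written proof, deferring to the same one-line observation used for Proposition~\ref{prop:equivalence}, namely the decomposition of each twist into a rotation about $\bp(v)$ plus a translation). You have simply made explicit the two points the paper leaves implicit --- that the rotational component $c_v(\bp(v),1)$ is annihilated by ${\bm b}^{\circ}_r$ via the incidence condition, and that the kernel of the resulting projection is exactly the span of the trivial danglings --- so your write-up matches the intended proof, just in more detail.
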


We now extend the notion of bar-joint frameworks to the real projective plane.
A 2-dimensional bar-joint framework with bar-boundary is  defined in the projective plane by a tuple $(G,\bR;\tilde{\bm p},{\bm b}^{\circ})$, where
\begin{itemize}
\item $G=(V,E)$ is a graph and $\bR$ is a multiset of vertices;
\item $\tilde{\bp}:v\in V\mapsto [\tilde{\bp}_v]\in \mathbb{P}^2$ is an injective mapping;
\item ${\bm b}^{\circ}:\bR\rightarrow Gr(2,\mathbb{R}^3)$ is a configuration of bar-boundary satisfying 
incidence condition between  joints and boundary-bars:
\begin{equation}
\label{eq:bar_joint_boundary_incidence}
\langle \tilde{\bp}_v, {\bm b}^{\circ}_r\rangle =0 \qquad \text{if } r\in \bR_v.
\end{equation}
\end{itemize}
Since $\tilde{\bp}$ determines a  bar-configuration ${\bm b}:E\rightarrow Gr(2,\mathbb{R}^3)$, 
$(G,\bR;\tilde{\bp},{\bm b}^{\circ})$ derives a body-bar framework with bar-boundary $(G,\bR;{\bm b},{\bm b}^{\circ})$ 
with incidence property between $\bp$ and ${\bm b}$:
\begin{equation}
\label{eq:bar_joint_incidence2}
\langle \tilde{\bp}_v, {\bm b}_e\rangle =0 \qquad \text{if $e\in E$ is incident to $v\in V$}.
\end{equation}
$(G,\bR;\tilde{\bp},{\bm b}^{\circ})$ is said to be infinitesimally rigid if the derived framework $(G,\bR;{\bm b},{\bm b}^{\circ})$ is bar-joint rigid.

We now provide an extension of Laman's theorem. 
The proof is again based on Theorem~\ref{thm:partition}, and its idea is essentially from Tay~\cite{tay:93}.
Unfortunately, in this case (compared with body-bar case), we need an assumption of ``generality'' of bar-boundary configurations:
A finite set of projective lines is in {\em general position} if no three lines of the set intersects at a point. 
\begin{theorem}
\label{thm:bar_joint_bar}
Let $G=(V,E)$ be a graph, $\bR$ be the multiset of vertices, and ${\bm b}^{\circ}:\bR\rightarrow Gr(2,\mathbb{R}^3)$.
Suppose ${\bm b}^{\circ}(\bR)$ is in general position.
Then there exists $\tilde{\bm p}:V\rightarrow \mathbb{P}^2$ such that
$(G,\bR;\tilde{\bm p},{\bm b}^{\circ})$ is a minimally infinitesimally rigid bar-joint framework with bar-boundary if and only if
\begin{itemize}
\item $|\bR_v|\leq 2$ and $\{{\bm b}^{\circ}_r\mid r\in \bR_v \}$ is linearly independent for $v\in V$,
\item $|F|+|\bR_F|\leq 2|V(F)|-3+\dim(\{{\bm b}^{\circ}_r\mid r\in \bR_F\})$ for any non-empty $F\subseteq E$,
\item $|E|+|\bR|=2|V|$.
\end{itemize} 
\end{theorem}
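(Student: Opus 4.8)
The plan is to mimic the structure of the proof of Theorem~\ref{thm:body_bar_bar}: translate the geometric statement into a matroidal one, invoke Theorem~\ref{thm:partition} (or its dual, Theorem~\ref{thm:dual}) to obtain a basic rooted-tree decomposition, and then use the decomposition to build an explicit realization certifying infinitesimal rigidity. The essential difference from the body-bar case is that here a joint is a \emph{$0$-dimensional body}, so the derived body-bar framework must satisfy the incidence conditions (\ref{eq:bar_joint_boundary_incidence}) and (\ref{eq:bar_joint_incidence2}); consequently we must realize $\tilde{\bm p}$ carefully rather than just picking a generic bar-configuration, and this is where the general-position hypothesis on ${\bm b}^\circ(\bR)$ enters.

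\textbf{Necessity.} First I would set up the rigidity matrix of the derived body-bar framework $(G,\bR;{\bm b},{\bm b}^\circ)$ acting on maps ${\bm m}:V\to\bigwedge^2\mathbb{R}^3$, which has $3|V|$ variables; bar-joint-rigidity means the solution space is exactly the $|V|$-dimensional space spanned by the trivial danglings ${\bm m}_v(v)=\tilde{\bm p}_v$. Counting equations against this $2|V|$-codimensional requirement gives $|E|+|\bR|=2|V|$ (the third condition). For the first condition: a single joint $v$ lives in a $3$-dimensional space, the trivial dangling around $v$ is one dimension, so the boundary-bars at $v$ plus the incident graph-bars can kill at most $2$ further dimensions, forcing $|\bR_v|\le 2$, and independence of $\{{\bm b}^\circ_r\mid r\in\bR_v\}$ is needed for the two boundary constraints to be effective. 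For the second (sparsity) condition, restrict to the sub-framework on $F\subseteq E$ with roots $\bR_F$: vertices outside $V(F)$ contribute a $3$-dimensional motion space each (as bodies with no constraints; but as joints with incidence, the relevant count is $3|V\setminus V(F)|$ minus the $|V\setminus V(F)|$ trivial danglings already subtracted when comparing to $2|V|$), and the component on $V(F)$ retains motions in the orthogonal complement of $\{{\bm b}^\circ_r\mid r\in\bR_F\}$ together with the motions coming from the ``ambient'' bar-joint trivial motions that are not pinned down — a careful bookkeeping yields $|F|+|\bR_F|\le 2|V(F)|-3+\dim(\{{\bm b}^\circ_r\mid r\in\bR_F\})$. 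I would phrase this exactly as in the ``only-if'' part of Theorem~\ref{thm:body_bar_bar}, with the extra $-3$ rather than $-D$ accounting for the three trivial motions of the plane together with the per-joint dangling correction.

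\textbf{Sufficiency.} Let ${\cal M}$ be the rank-$2$ linear matroid on $\bR$ represented by the lines ${\bm b}^\circ_r\in Gr(2,\mathbb{R}^3)$ (rank $2$ because $\dim\bigwedge^2\mathbb{R}^3=3$, but we want it to be $2$: I would first verify, exactly as in the body-bar proof via the counting equality $2|V|=|E|+|\bR|\le 2|V|-3+k+\dots$, that the relevant matroid here should instead be taken on $\bR$ so that the decomposition yields \emph{two} spanning rooted-trees per vertex — so I'd apply Theorem~\ref{thm:partition} with $k=2$ to the matroid ${\cal M}$ whose rank is forced to be $2$). The three counting conditions of the theorem are precisely (C1)(C2)(C3) for ${\cal M}$ with $k=2$, so $(G,\bR)$ admits a basic rooted-tree decomposition $\{(T_1,r_1),\dots,(T_t,r_t)\}$. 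Now I need to produce $\tilde{\bm p}:V\to\mathbb{P}^2$ compatible with all the incidences (\ref{eq:bar_joint_boundary_incidence}) and such that the derived framework is bar-joint-rigid. Here is where the argument departs from body-bar: assign each vertex $v$, which is spanned by exactly two rooted-trees, say $(T_i,r_i)$ and $(T_j,r_j)$ with $i\neq j$; the two boundary lines ${\bm b}^\circ_{r_i},{\bm b}^\circ_{r_j}$ are distinct projective lines and, being in general position, meet in a unique point of $\mathbb{P}^2$ — I would \emph{try} setting $\tilde{\bm p}_v$ to be (a point determined by) that intersection, following Tay's idea \cite{tay:93}. One then sets ${\bm b}_e$ along the tree $T_i$ equal to ${\bm b}^\circ_{r_i}$, and checks that an infinitesimal motion, propagated along the unique path in each of the two trees spanning $v$ (as in the body-bar proof, telescoping the constraints), forces $\langle{\bm m}(v),{\bm b}^\circ_{r_i}\rangle=\langle{\bm m}(v),{\bm b}^\circ_{r_j}\rangle=0$; since these two lines are independent in ${\cal M}$, ${\bm m}(v)$ is forced into the one-dimensional space they jointly leave free, which is exactly the trivial dangling direction $\tilde{\bm p}_v$. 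Thus every motion is a combination of trivial danglings, i.e.\ bar-joint-rigid; minimality follows from the dimension count $|E|+|\bR|=2|V|$.

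\textbf{Main obstacle.} The hard part will be the realization step: making the choice of $\tilde{\bm p}_v$ simultaneously consistent across \emph{all} vertices and \emph{all} edges, because a single vertex $v$ is spanned by two specific trees but it may be incident to edges belonging to several other trees, and along each such tree the bar ${\bm b}_e$ must pass through $\tilde{\bm p}$ of both endpoints (the incidence (\ref{eq:bar_joint_incidence2})) — so the positions are not freely choosable tree-by-tree but must globally cohere. The clean way to handle this is to argue that the set of configurations $\tilde{\bm p}$ satisfying the incidences and making the rigidity matrix have maximal rank is a (nonempty) Zariski-open subset of an appropriate variety: nonemptiness is witnessed by the intersection-point construction above for a ``generic'' perturbation, and genericity of ${\bm b}^\circ(\bR)$ (no three boundary lines concurrent) guarantees that this construction does not collapse distinct joints or create spurious dependencies. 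I would structure this as a lemma — that maximal rank of the bar-joint rigidity matrix is an open condition on $\tilde{\bm p}$ within the incidence variety, reduced to exhibiting one good realization via the tree decomposition — and then the general-position hypothesis is exactly what is needed to exhibit that one good realization. Pinning down that this open set is genuinely nonempty, rather than merely generically-looking, is the crux; everything else is the same telescoping-along-paths computation used for Theorem~\ref{thm:body_bar_bar}.
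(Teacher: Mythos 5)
Your overall strategy (build a rooted-tree decomposition in which every vertex is spanned by exactly two trees with distinct boundary lines, place each joint at the intersection of its two lines, and telescope constraints along tree paths) is the same as the paper's, but there are two genuine gaps. First, the matroid setup is off: the linear matroid $\calM$ represented by $\{{\bm b}^{\circ}_r\}$ in $\bigwedge^2\mathbb{R}^3\cong\mathbb{R}^3$ has rank $3$ (this is forced by the counting equality, exactly as in Theorem~\ref{thm:body_bar_bar}), and the stated counting conditions are \emph{not} (C1)(C2)(C3) for $\calM$ with $k=2$. The paper instead applies Theorem~\ref{thm:partition} to the \emph{truncation} $\calM^{\downarrow}$ (rank $2$), using $r_{\calM}(\bR_F)\leq r_{\calM^{\downarrow}}(\bR_F)+1$ to convert the hypothesis $|F|+|\bR_F|\leq 2|V(F)|-3+r_{\calM}(\bR_F)$ into (C2) for $\calM^{\downarrow}$ with $k=2$. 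Your proposal gestures at "wanting rank $2$" but never identifies the truncation, so the application of Theorem~\ref{thm:partition} is not actually licensed as written.

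Second, and more seriously, your treatment of injectivity is wrong where it matters most. General position of ${\bm b}^{\circ}(\bR)$ does \emph{not} prevent distinct joints from collapsing: any two vertices spanned by the same pair of rooted-trees are sent to the same intersection point, no matter how generic the lines are. (General position only guarantees the converse: coincidence implies the same pair of trees.) This is the crux of the proof, and the paper handles it by establishing an extra combinatorial property of the decomposition (Claim~\ref{claim:dec1}(ii): for no $X$ with $|X|\geq 2$ do two trees both restrict to spanning trees on $X$ --- this follows from the sparsity count, in the spirit of Crapo's proper 3tree2 decompositions) and then running an explicit perturbation: a cluster $X$ of coincident joints is split along one of its two lines, with property (ii) guaranteeing a subset $X'$ whose cut edges inside $X$ all belong to a single tree, so the incidences are preserved as $X'$ slides along that tree's line, and lower semicontinuity of rank preserves rigidity. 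Your fallback --- that maximal rank is Zariski-open on the incidence variety --- does not substitute for this: at a non-injective configuration the bar direction $\tilde{\bm p}_u\wedge\tilde{\bm p}_v$ degenerates, so the rigidity matrix is not a continuous function of $\tilde{\bm p}$ alone there, and you cannot conclude that nearby injective points of the incidence variety inherit the rank of your constructed (non-injective) witness without doing essentially the paper's perturbation, which in turn requires property (ii). Without that combinatorial lemma the argument does not close.
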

\begin{proof}
We only prove the sufficiency.
(The necessity can be shown in an identical manner to Theorem~\ref{thm:body_bar_bar}).
Define a linear matroid ${\cal M}$ on $\bR$ represented by ${\bm b}^{\circ}_r \ (r\in \bR)$, 
and let $k$ be the rank of ${\cal M}$.
As in the proof of Theorem~\ref{thm:body_bar_bar}, we  have $k=3$ from the counting condition.
We first construct a special rooted-tree decomposition based on Theorem~\ref{thm:partition}.
\begin{claim}
\label{claim:dec1}
$G$ admits a rooted-tree decomposition $\{(T_1, r_1), \dots, (T_t,r_t)\}$ such that 
\begin{itemize}
\item[(i)] it is a basic decomposition with respect to the truncation ${\cal M}^{\downarrow}$ of ${\cal M}$, and
\item[(ii)] for any $X\subseteq V$ with $|X|\geq 2$ at most one set among $\{T_i\cap E[X]\mid 1\leq i\leq t \}$ forms a spanning tree on $X$,
where $E[X]=\{uv\in E\mid u,v\in X\}$. 
\end{itemize}
\end{claim}
\begin{proof}
Since $r_{\calM}(R_F)\leq r_{\calM^{\downarrow}}(R_F)+1$, we have $|F|+|\bR_F|\leq 2|V(F)|-2+r_{\calM^{\downarrow}}(\bR_F)$ for any non-empty $F\subseteq E$.
Hence  $E$ admits a basic rooted-tree decomposition $\{(T_1,r_1),\dots, (T_t,r_t)\}$ with respect to ${\cal M}^{\downarrow}$ by Theorem~\ref{thm:partition}.
Suppose this decomposition does not satisfy (ii). Then there is an $X\subseteq V$ with $|X|\geq 2$ such that
at least two sets among $\{T_i\cap E[X]\mid 1\leq i\leq t \}$, say $T_1\cap E[X]$ and $T_2\cap E[X]$, form spanning trees on $X$.
Let $F$ be the union.
Since the decomposition is basic with respect to ${\cal M}^{\downarrow}$,
every vertex in $V(F)$ is spanned by only $(T_1,r_1)$ and $(T_2,r_2)$.
Thus, $|\bR_F|=\dim\{{\bm b}^{\circ}_r\mid r\in \bR_F\}$.
From $|F|=2|V(F)|-2$, we obtain $|F|+|\bR_F|>2|V(F)|-3+\dim\{{\bm b}^{\circ}_r\mid r\in \bR_F\}$, a contradiction.
\end{proof}

Take a rooted-tree decomposition $\{(T_1,r_1), \dots, (T_t,r_t)\}$ shown in Claim~\ref{claim:dec1}. 
We define ${\bm b}:E\rightarrow Gr(2,\mathbb{R}^3)$ by
\[
{\bm b}(e)={\bm b}^{\circ}(r_i) \qquad \text{ if } e\in T_i.
\]
Since the decomposition is basic with respect to ${\cal M}^{\downarrow}$, each vertex is spanned by exactly two $(T_i,r_i)$ and $(T_j,r_j)$ among them,
such that ${\bm b}(r_i)\neq {\bm b}(r_j)$.
We can thus define $\tilde{\bp}:V\rightarrow \mathbb{P}^2$ by 
\begin{equation*} 
\tilde{\bp}(v)={\bm b}^{\circ}(r_{i})\cap {\bm b}^{\circ}(r_j) \qquad \text{if $v$ is spanned by $(T_i,r_i)$ and $(T_j,r_j)$}.
\end{equation*}
Clearly, $\tilde{\bp}$, ${\bm b}$ and ${\bm b}^{\circ}$ satisfy the incidence conditions  (\ref{eq:bar_joint_boundary_incidence}) and (\ref{eq:bar_joint_incidence2}).

As in the proof of Theorem~\ref{thm:body_bar_bar}, it can be easily checked that 
the possible infinitesimal motions of the body-bar framework $(G,\bR;{\bm b},{\bm b}^{\circ})$ are linear combinations of  trivial danglings;
therefore, $(G,\bR;{\bm b},{\bm b}^{\circ})$ is bar-joint-rigid.
However, $\tilde{\bp}$ may not be injective, which means that $(G,\bR;\tilde{\bp},{\bm b}^{\circ})$ may not be a bar-joint framework.
We now show that $\tilde{\bp}$ can be continuously perturbed so that $\tilde{\bp}$ is injective keeping the bar-joint-rigidity.

Since ${\bm b}^{\circ}$ is in general position, $\tilde{\bp}(u)=\tilde{\bp}(v)$ holds if and only if
$u$ and $v$ are spanned by the same two rooted-trees in the decomposition.
Suppose there exists a set $X$ of vertices with $|X|\geq 2$ which are spanned by the same two rooted-trees, say 
$(T_1,r_1)$ and $(T_2,r_2)$.
By (ii) of Claim~\ref{claim:dec1} we may assume that $T_2\cap E[X]$ is not  a spanning tree on $X$.
Since $T_2\cap E[X]$ does not span all elements of $X$, we can take a proper subset $X'$ of $X$ such that $r_2\notin X'$ and every edge connecting between 
$X'$ and $X\setminus X'$ belongs to $T_1$. 
To resolve the point-coincidence between $\tilde{\bp}(X')$ and $\tilde{\bp}(X\setminus X')$,
we continuously move $\tilde{\bp}(X')$  along the line ${\bm b}^{\circ}(r_1)$ keeping the coincidence inside ${\bp}(X')$.
The lines $\{{\bm b}(uv)\mid u,\in X', v\in V\setminus X\}$ are simultaneously moved to keep the incidence (\ref{eq:bar_joint_incidence2}), whose directions are continously changed.   
If the displacement is small enough, the dimension of the motion space does not change since all coordinates are continuously changed.
Also, since $r_i\notin X'$ for any $r_i\in \bR\setminus \{r_1\}$, the incidence (\ref{eq:bar_joint_boundary_incidence}) is preserved.
Applying this procedure repeatedly, $\tilde{\bp}$ can be converted to an injective mapping keeping the bar-joint-rigidity, 
and we obtain an infinitesimally rigid  bar-joint framework $(G,\bR;\tilde{\bp},{\bm b}^{\circ})$.
\end{proof}

\noindent
{\bf Remark.}
The special decomposition presented in the proof of Theorem~\ref{thm:bar_joint_bar} is an analog of a so-called {\em proper 3tree2 decomposition}, 
introduced by Crapo~\cite{crapo:1990} for an alternative characterization of 2-dimensional generic rigidity.

\medskip

\noindent
{\bf Remark.}
The statement of Theorem~\ref{thm:bar_joint_bar} can be converted to a purely combinatorial form due to the simplicity of the lattice of the linear matroid ${\cal M}$ represented by ${\bm b}^{\circ}$.
Let us assign a color to each element in $\bR$ such that $r_i$ and $r_j$ have the different colors if and only if ${\bm b}^{\circ}(r_i)\neq {\bm b}^{\circ}(r_j)$.
A matroid ${\cal N}=(\bR,{\cal J})$ can be defined such that $\bR'\subseteq \bR$ is independent if and only if all elements of $\bR'$ have distinct colors and $|\bR'|\leq 3$. 
Then ${\cal N}$ is isomorphic to ${\cal M}$ if ${\bm b}^{\circ}(\bR)$ is in general position.
This implies that the counting condition of Theorem~\ref{thm:bar_joint_bar} can be written in terms of ${\cal N}$ as follows:
\begin{itemize}
\item For each $v\in V$, $|R_v|\leq 2$, and $r$ and $r'$ have distinct colors if $\bR_v=\{r,r'\}$;
\item $|F|+|\bR_F|\leq 2|V(F)|-3+\min\{3,c(\bR_F)\}$ for any non-empty $F\subseteq E$, where $c(\bR_F)$ denotes the number of colors in $\bR_F$;
\item $|E|+|\bR|=2|V|$.
\end{itemize} 
In \cite{rooted_forest}, we showed how to check the counting condition of this type in $O(|V|^2)$ time.

\subsection{Pinned Bar-joint Frameworks}
A $2$-dimensional {\em pinned bar-joint framework} is defined as $(G,X, {\bm p})$, where
\begin{itemize}
\item $G=(V,E)$ is a graph;
\item $X$ is a subset of $V$;
\item ${\bm p}:V\rightarrow \mathbb{R}^{2}$ is a joint-configuration.
\end{itemize}
An infinitesimal motion of $(G,X,\bp)$ is a mapping ${\bm m}:V\rightarrow \mathbb{R}^2$ satisfying bar-constraints (\ref{eq:bar_joint_motion1})
as well as additional pin-constraints; for each $v\in V$
\begin{equation*}
{\bm m}(v)=0 \qquad \text{ for } v\in X.
\end{equation*}
$(G,X,{\bm p})$ is said to be {\em infinitesimally rigid} if there is no nonzero motion.

As before, we can define a pinned bar-joint framework in the real projective plane by $(G,X,\tilde{\bm p})$, where 
$G=(V,E)$ is a graph, $X\subseteq V$, and $\tilde{\bm p}:V\rightarrow \mathbb{P}^2$.

In two dimensional case, the pinning down a point is equivalent to connecting that point with external environment by two any distinct bars.
We can thus consider pinned bar-joint frameworks as a special case of bar-joint frameworks with bar-boundary, where configurations of bar-boundary can be in general position.
It is thus straightforward to see the following characterization of pinned bar-joint frameworks in the real projective plane from Theorem~\ref{thm:bar_joint_bar}.
\begin{theorem}[\cite{servatius2010combinatorial}]
\label{thm:pin_laman_projective}
Let $G=(V,E)$ be a graph, $X$ be a vertex subset, and $\tilde{\bm p}_X:X\rightarrow \mathbb{P}^2$ be an injective mapping.
Define $f_X:2^E\rightarrow \{0,2,3\}$ by
\begin{equation}
\label{eq:f_X}
f_{X}(F)=\begin{cases}
0 & \text{ if } |X\cap V(F)|=0 \\ 
2 & \text{ if } |X\cap V(F)|=1 \\
3 & \text{ otherwise. }
\end{cases}
\end{equation}
Then, there is a joint configuration $\tilde{\bm p}:V\rightarrow \mathbb{P}^2$ extending $\tilde{\bm p}_X$
such that the pinned bar-joint framework $(G,X,\tilde{\bm p})$ is minimally infinitesimally rigid if and only if
\begin{itemize}
\item $|E|=2|V\setminus X|$,
\item $|F|\leq 2|V(F)\setminus X|-3+f_X(F)$ for any non-empty $F\subseteq E$.
\end{itemize}
\end{theorem}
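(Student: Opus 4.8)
The plan is to obtain Theorem~\ref{thm:pin_laman_projective} as the specialization of Theorem~\ref{thm:bar_joint_bar} in which every boundary root encodes one of the two bars that, in the plane, simulate a pin. Given $(G,X,\tilde{\bm p}_X)$, I would attach to each $v\in X$ two roots, setting $\bR_v=\{r_v^1,r_v^2\}$ for $v\in X$ and $\bR_v=\emptyset$ otherwise, so that $|\bR|=2|X|$, and I would choose a boundary configuration ${\bm b}^{\circ}:\bR\to Gr(2,\mathbb{R}^3)$ by taking ${\bm b}^{\circ}(r_v^1)$ and ${\bm b}^{\circ}(r_v^2)$ to be two distinct projective lines through $\tilde{\bm p}_X(v)$, chosen so that the family ${\bm b}^{\circ}(\bR)$ is in general position. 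Such a choice exists: the points $\tilde{\bm p}_X(v)$ ($v\in X$) are pairwise distinct, so for each triple of roots the ``concurrent'' locus is a proper subvariety of the product of the pencils of lines through the prescribed points, and avoiding these finitely many loci leaves a nonempty set of admissible choices. With this ${\bm b}^{\circ}$, the incidence condition~(\ref{eq:bar_joint_boundary_incidence}) forces, for every $v\in X$, the joint $\tilde{\bm p}_v$ to lie in the one-dimensional common orthogonal space of the two independent vectors ${\bm b}^{\circ}(r_v^1),{\bm b}^{\circ}(r_v^2)$, which by construction is spanned by $\tilde{\bm p}_X(v)$; hence any bar-joint framework with bar-boundary built on $(G,\bR,{\bm b}^{\circ})$ automatically extends $\tilde{\bm p}_X$, and conversely each extension $\tilde{\bm p}$ of $\tilde{\bm p}_X$ yields such a framework.

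Since in the plane pinning a joint at a point is equivalent to connecting it to the environment by two bars of distinct directions through that point, the pinned framework $(G,X,\tilde{\bm p})$ and the derived bar-joint framework with bar-boundary $(G,\bR;\tilde{\bm p},{\bm b}^{\circ})$ have the same space of infinitesimal motions for every $\tilde{\bm p}$ extending $\tilde{\bm p}_X$. For the matching of the two \emph{minimality} notions I would argue as follows. The derived framework has $|E|+|\bR|=|E|+2|X|$ scalar constraints (one per interior bar and one per boundary bar) in $2|V|$ unknowns; if $(G,X,\tilde{\bm p})$ is minimally infinitesimally rigid, then, because the $2|X|$ pin equations $\bm m(v)=0$ ($v\in X$) are linearly independent coordinate conditions, any linear dependence among all constraints of $(G,X,\tilde{\bm p})$ would carry a nonzero coefficient on some interior bar equation, contradicting minimality; so there is no dependence, $|E|+2|X|=2|V|$, and in the derived framework all constraints are independent, making it minimally infinitesimally rigid as well. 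Conversely, if the derived framework is minimally infinitesimally rigid, then $(G,X,\tilde{\bm p})$ is infinitesimally rigid, and deleting any interior edge---with identical effect on the motion space in both models---destroys rigidity, so $(G,X,\tilde{\bm p})$ is minimally infinitesimally rigid.

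It then remains to read off the three conditions of Theorem~\ref{thm:bar_joint_bar} for $(G,\bR,{\bm b}^{\circ})$. The requirement that $|\bR_v|\le 2$ and $\{{\bm b}^{\circ}_r\mid r\in\bR_v\}$ be linearly independent is automatic, since $|\bR_v|\in\{0,2\}$ and two distinct lines through a common point are independent in $\bigwedge^2\mathbb{R}^3$. For $F\subseteq E$ one has $|\bR_F|=2|X\cap V(F)|$, while general position gives $\dim\{{\bm b}^{\circ}_r\mid r\in\bR_F\}=f_X(F)$: the dimension is $0$ when $X\cap V(F)=\emptyset$, it is $2$ when $|X\cap V(F)|=1$, and it is $3$ when $|X\cap V(F)|\ge 2$, because three lines of ${\bm b}^{\circ}(\bR)$ through two distinct prescribed points can never be concurrent. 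Substituting, $|E|+|\bR|=2|V|$ becomes $|E|=2|V\setminus X|$, and $|F|+|\bR_F|\le 2|V(F)|-3+\dim\{{\bm b}^{\circ}_r\mid r\in\bR_F\}$ becomes $|F|\le 2|V(F)\setminus X|-3+f_X(F)$. Feeding this equivalence of counting conditions, together with the previous two paragraphs, into Theorem~\ref{thm:bar_joint_bar} gives Theorem~\ref{thm:pin_laman_projective}; the identical scheme, attaching one boundary bar per slider, underlies Theorem~\ref{thm:bar_slider}.

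The reduction is essentially bookkeeping, as the paper indicates; the only points needing care are producing ${\bm b}^{\circ}$ in general position from the prescribed pin positions, and checking that $\dim\{{\bm b}^{\circ}_r\mid r\in\bR_F\}$ collapses exactly to $f_X(F)$---which is where general position is used, in the case of two or more prescribed points inside $V(F)$.
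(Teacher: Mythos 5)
Your proposal is correct and follows exactly the route the paper takes: it replaces each pin by two boundary bars through the prescribed point, chooses the resulting line configuration in general position, and reads Theorem~\ref{thm:pin_laman_projective} off as the corresponding specialization of Theorem~\ref{thm:bar_joint_bar}. The paper states this reduction in two sentences and calls it straightforward; your write-up simply supplies the bookkeeping (achievability of general position, the identity $\dim\{{\bm b}^{\circ}_r\mid r\in\bR_F\}=f_X(F)$, and the matching of the minimality notions), all of which checks out.
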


The combinatorial condition of Theorem~\ref{thm:pin_laman_projective} actually characterizes not only infinitesimal rigidity but also rigidity in the 2-dimensional Euclidean space.
Let us identify $\bp:V\rightarrow \mathbb{R}^2$ with a point $\bp\in \mathbb{R}^{2|V|}$ 
and consider $f_{G}:\mathbb{R}^{2|V|}\rightarrow \mathbb{R}^{E}$ defined by $f_{G}(\bp)=(\dots,(\bp(u)-\bp(v))^2,\dots)\in\mathbb{R}^E$.
Substituting $\bp(X)=\tilde{\bp}(X)$,
$f_G$ is reduced to a mapping $f_{G,X}:\mathbb{R}^{2|V\setminus X|}\rightarrow \mathbb{R}^{|E|}$ of the remaining parameter $\bp(V\setminus X)\in \mathbb{R}^{2|V\setminus X|}$.
Then $(G,X,\bp)$ is said to be {\em rigid} if $f^{-1}_{G,X}(f_{G,X}(\bp(V\setminus X))$ is an isolated point in $\mathbb{R}^{2|V\setminus X|}$.

Let $R(G,X,\bp)$ be the Jacobian of $f_{G,X}$ at $\bp(V\setminus X)$.
It is easy to observe that $(G,\bp)$ is infinitesimally rigid if and only if the rank of $R(G,X,\bp)$ is equal to $2|V\setminus X|$. 
$\bp$ is called {\em regular} if the rank of $R(G,X,\bp)$ is maximized over all joint-configurations with $\bp(X)=\tilde{\bp}(X)$. 
Notice that $\{\bp(V\setminus X)\in \mathbb{R}^{2|V\setminus X|}\mid \bp\in \mathbb{R}^{2|V|} \text{ is regular}\}$ forms a dense open subset of $\mathbb{R}^{2|V\setminus X|}$.
Moreover, applying the same argument as Asimow and Roth~\cite{asimow1978}, we see that $(G,X,\bp)$ is rigid if and only if $(G,X,\bp)$ is infinitesimally rigid if $\bp$ is regular.
Consequently, for almost all joint-configurations extending $\tilde{\bp}$, the rigidity of pinned bar-joint frameworks is characterized by the counting condition given in Theorem~\ref{thm:pin_laman_projective}.

\subsection{Bar-joint-slider Frameworks}
\label{subsec:bar_joint_slider}
A {\em 2-dimensional bar-joint-slider framework} is a bar-joint framework some of whose joints are constrained by sliders as shown in Figure~\ref{fig:bar_slider}(a).
Such a slider restricts the possible motions of a joint to the direction of the slider. 
We thus define a 2-dimensional bar-joint-slider framework by $(G,\bR;\bp,\bd)$, where
\begin{itemize}
\item $G=(V,E)$ be a graph and $\bR$ is a multiset of vertices;
\item $\bp:V \rightarrow \mathbb{R}^2$ is a joint-configuration;
\item $\bd:r\in \bR \mapsto [\bd_r]\in \mathbb{P}^1$ is a realization of sliders.
\end{itemize}
In this setting $\bd_r$ indicates the direction of the slider corresponding to $r\in \bR$. 

An {\em infinitesimal motion} is defined as a mapping $\bmm:V\rightarrow \mathbb{R}^2$ satisfying not only bar-constraints (\ref{eq:bar_joint_motion1})
but also slider constraints:
\begin{equation}
\langle \bmm(v),\bd_r^{\bot}\rangle=0 \qquad \text{ for each $r\in \bR_v$} \label{eq:bar_slider_external}
\end{equation}
where $\bd_r^{\bot}$ denotes the unit vector orthogonal to $\bd_r\in \mathbb{R}^2$.
$(G,\bR;\bp,\bd)$ is said to be infinitesimally rigid if there is no nonzero motion.

The following characterization of minimal rigidity was given by us in \cite{isaac2009}, which generalizes a result of \cite{streinu2010slider} to non-generic case.
The theorem is now immediate from Theorem~\ref{thm:pin_laman_projective}.
\begin{theorem}
\label{thm:bar_slider}
Let $G$ be a graph, $\bR$ be a multiset of vertices,  and $\bd:\bR\rightarrow \mathbb{P}^1$ be a mapping from $r\in \bR$ to the direction of the slider corresponding to $r$.
Then, there exits
a joint-configuration $\bp$  such that
the 2-dimensional bar-joint-slider framework $(G,\bR;\bp,\bd)$ is minimally infinitesimally rigid if and only if
\begin{itemize}
\item $|F|+|\bR_F|\leq 2|V(F)|-2+\min\{2,c(\bR_F)\}$ for any non-empty $F\subseteq E$, 
\item $|F|\leq 2|V(F)|-3$ for any non-empty $F\subseteq E$,
\item $|E|+|\bR|=2|V|$,
\end{itemize}
where $c(\bR_F)$ denotes the number of distinct directions among $\bd(\bR_F)$.
\end{theorem}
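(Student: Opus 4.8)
The plan is to deduce Theorem~\ref{thm:bar_slider} from Theorem~\ref{thm:pin_laman_projective} by realizing each slider as a bar attached to a joint pinned ``at infinity''. The underlying observation is elementary: a slider at $v$ of direction $\bd_r$ contributes to the rigidity equations exactly the single scalar constraint $\langle\bmm(v),\bd_r^{\bot}\rangle=0$, i.e.\ it forces $\bmm(v)$ to be parallel to $\bd_r$, and this is precisely the infinitesimal bar-constraint of a bar joining $v$ to a point lying in the direction $\bd_r^{\bot}$ from $\bp(v)$; letting that point recede to infinity it becomes the bar from $v$ to the point at infinity $[\bd_r^{\bot},0]\in\mathbb{P}^2$. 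Sending the joint to infinity is what matters here: the direction of the resulting bar is $\bd_r^{\bot}$ no matter where $v$ is placed, so this prescribes the slider direction while leaving the positions of $V$ completely free.

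Concretely, given $(G,\bR,\bd)$ I would form a pinned bar-joint framework as follows. For each direction $[\bd]$ occurring in $\bd(\bR)$ adjoin one new vertex $w_{[\bd]}$, let $X$ be the set of these new vertices, and for each $r\in\bR_v$ add an edge $vw_{[\bd_r]}$ (so $\hat E\setminus E$ consists of $|\bR|$ edges, with repetitions allowed); set $\tilde\bp_X(w_{[\bd]})=[\bd^{\bot},0]$. Since distinct directions give distinct points at infinity, $\tilde\bp_X$ is injective, so Theorem~\ref{thm:pin_laman_projective} applies to $(\hat G,X,\tilde\bp_X)$. Identifying a joint-configuration $\bp:V\to\mathbb{R}^2$ of the slider framework with the extension $\tilde\bp$ of $\tilde\bp_X$ determined by $\tilde\bp|_V=[\bp,1]$ --- and, exactly as in the proof of Theorem~\ref{thm:bar_joint_bar}, perturbing at the end to keep $\tilde\bp|_V$ finite and injective --- the rigidity matrix of $(G,\bR;\bp,\bd)$ and that of $(\hat G,X,\tilde\bp)$ have the same rows (each edge of $G$ contributes its usual row, and each added edge $vw_{[\bd_r]}$ contributes the row of the slider $r$) and the same free unknowns $\bp(V)$; hence one is minimally infinitesimally rigid if and only if the other is, and a suitable $\bp$ exists if and only if $(\hat G,X,\tilde\bp_X)$ satisfies the counting conditions of Theorem~\ref{thm:pin_laman_projective}.

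It then remains to rewrite those conditions as (a)--(c). The equality $|\hat E|=2|\hat V\setminus X|$ is $|E|+|\bR|=2|V|$, the third condition. For the sparsity inequality, take a nonempty $F'\subseteq\hat E$ and split $F'=F\cup S$ with $F=F'\cap E$ and $S$ a set of added edges corresponding to a submultiset $\bR_S\subseteq\bR$; then $|X\cap V(F')|$ equals the number $c(\bR_S)$ of distinct directions in $\bR_S$ and $V(F')\setminus X\supseteq V(F)$. Since adjoining to $F'$ an added edge incident to a vertex of $V$ outside $V(F')\setminus X$ decreases $|F'|-2|V(F')\setminus X|$ by one, the binding instances are $F'=F$, giving $|F|\le 2|V(F)|-3$ (condition~(b)), and $F'=F\cup\{\text{added edges incident to }V(F)\}$, for which $|F'|=|F|+|\bR_F|$, $V(F')\setminus X=V(F)$, and $f_X(F')$ is $2$ when $c(\bR_F)=1$ and $3$ when $c(\bR_F)\ge 2$; substituting, the inequality becomes $|F|+|\bR_F|\le 2|V(F)|-2+\min\{2,c(\bR_F)\}$ (condition~(a)). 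The remaining instances, those with $F=\emptyset$, only impose that each vertex carries at most two sliders of distinct directions, which is already subsumed by~(a). Running this equivalence in both directions proves the theorem. The step I expect to need the most care is the first one: checking that the slider constraint and the ``bar to a point pinned at infinity'' constraint really are the same linear equation inside the projective/body-bar formalism of Section~\ref{sec:bar_joint}, so that minimal infinitesimal rigidity transfers both ways; the related subtlety of routing parallel sliders through a single pinned vertex --- which is exactly what makes the direction count $\min\{2,c(\bR_F)\}$ match the vertex-cardinality term $f_X$ --- must also be handled, while the residual counting bookkeeping is routine.
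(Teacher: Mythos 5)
Your reduction is the same as the paper's: replace each slider $r\in\bR_v$ by a bar from $v$ to a joint pinned at the point at infinity $[\bd_r^{\bot},0]$, observe that the resulting pinned bar-joint framework in the projective plane has the same rigidity matrix as the slider framework, and invoke Theorem~\ref{thm:pin_laman_projective}. Your variant of the construction (one pinned vertex per \emph{direction} occurring in $\bd(\bR)$, rather than one per element of $\bR$ as in the paper's auxiliary graph $G'$) is in fact the cleaner choice, since Theorem~\ref{thm:pin_laman_projective} requires $\tilde{\bm p}_X$ to be injective and parallel sliders at distinct vertices would otherwise produce coincident pinned joints.

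There is, however, one concrete error in your translation of the counting conditions. You assert that the instances $F'$ with $F'\cap E=\emptyset$ ``only impose that each vertex carries at most two sliders of distinct directions, which is already subsumed by (a)''. The first half is right (and the multi-vertex pure-slider instances do reduce to the single-vertex ones), but the subsumption claim is false: the per-vertex condition is \emph{not} a consequence of (a)--(c). Take $V=\{u,v\}$, $E=\{uv\}$, $\bR_u=\emptyset$ and $\bR_v=\{r_1,r_2,r_3\}$ with three pairwise distinct directions. Then $|E|+|\bR|=4=2|V|$, $|F|=1\le 2|V(F)|-3$, and $|F|+|\bR_F|=4\le 2|V(F)|-2+\min\{2,c(\bR_F)\}=4$, so (a)--(c) all hold; yet the three slider rows at $v$ constrain only the two coordinates of $\bmm(v)$, so the rigidity matrix has rank at most $3<4$ and the framework is flexible for every $\bp$ (correspondingly, the pinned framework violates the count of Theorem~\ref{thm:pin_laman_projective} on the three boundary edges at $v$). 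So the per-vertex condition ($|\bR_v|\le 2$, with distinct directions when $|\bR_v|=2$) must be carried as an explicit hypothesis, exactly as the analogous first bullet appears in Theorem~\ref{thm:bar_joint_bar}; it cannot be absorbed into (a). To be fair, the paper's own proof dismisses the whole counting translation as routine and its statement omits this bullet as well, so on every other point your write-up matches the paper's level of detail --- but this particular step does not survive checking and needs either the extra hypothesis or a correction.
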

\begin{proof}
In the analysis of infinitesimal rigidity, a slider constraint is equivalent to a bar-constraint between  the external environment and the corresponding joint, 
with the bar orthogonal to the direction of the slider.
Such external bar-constraints intersects at a point at infinity if the corresponding sliders have the same direction.
This means that a 2-dimensional bar-joint-slider framework can be converted to a pinned bar-joint framework in the real projective plane with the same rigidity property.
See Figure~\ref{fig:bar_slider}(b) for an example.

Let us see this conversion in more detail.
We shall consider an auxiliary graph $G'=(V',E')$ whose vertex set $V'$ is $V\cup \bR$ (as a multiset), and
$u$ and $v$ are linked by an edge if and only if (i) $u, v\in V$ and $uv\in E$ or 
(ii) $v\in V$ and  $u\in \bR_v$.
Define $\tilde{\bp}:V'\rightarrow \mathbb{P}^2$ by $\tilde{\bp}(v)=[\bp(v),1]$ for $v\in V$ and $\tilde{\bp}(r)=[\bd_r^{\bot},0]$ for $r\in \bR$.
Also, let $X=\bR$.   
Then $(G',X,\tilde{p})$ is a pinned bar-joint framework in the real projective plane, which is infinitesimally rigid if and only if 
the original bar-joint-slider framework $(G,\bR;\bp,\bd)$ is infinitesimally rigid.

It is routine to check the equivalence of two counting conditions: the one given in the statement for $G$ and the one of Theorem~\ref{thm:pin_laman_projective} for $G'$.
\end{proof}

\noindent
{\bf Remark.} 
Let $\cal M$ be the linear matroid on $\bR$ represented by $\bd_r$.
An edge set satisfying the first and the second condition of Theorem~\ref{thm:bar_slider} is a common independent set of ${\cal N}(f_{\calM,2})$ and 
the generic 2-rigidity matroid, which is the matroid induced by $2|V(F)|-3$.
However, since the function $\hat{f}_{\calM,2}$ defined by
\begin{equation*}
\hat{f}_{\calM,2}(F)=\min\{f_{\calM,2}(F), 2|V(F)|-3\}
\end{equation*}
happens to be submodular,
Theorem~\ref{thm:bar_slider} is indeed a characterization in terms of a matroid.

%

\begin{figure}[t]
\centering
\begin{minipage}{0.4\textwidth}
\centering
\includegraphics[scale=1]{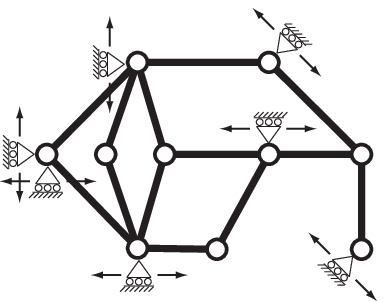}
\par
(a)
\end{minipage}
\begin{minipage}{0.4\textwidth}
\centering
\includegraphics[scale=0.8]{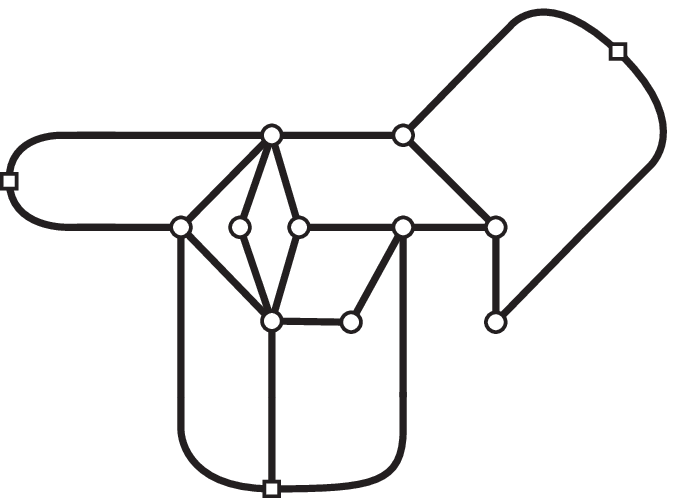}
\par
(b)
\end{minipage}
\caption{(a) A bar-slider framework and (b) the corresponding pinned bar-joint framework, where the squared vertices are pinned at infinity.}
\label{fig:bar_slider}
\end{figure}

\section{Concluding Remarks}
We have presented extensions of Nash-Williams tree-partition theorem and Tutte-Nash-Williams tree-packing theorem, by relaxing the ``spanning'' condition
to a matroid condition.

An interesting open problem is to develop a further extension of Theorem~\ref{thm:partition}.
Let us consider the following natural extension.
Suppose we are given a graph $G$ with roots $\bR$, a matroid ${\cal M}$ on $\bR$ of rank $k$, and $d:V\rightarrow \{1,\dots, k\}$.
Can we decide whether $(G,\bR)$ contains edge-disjoint rooted-trees $(T_1,r_1),\dots, (T_t,r_t)$ 
such that $\{r_i\in \bR \mid v\in V(T_i,r_i)\}$ is an independent set of size $d(v)$ for each $v\in V$?
This problem is however shown to be NP-hard even if $|\bR|=2$ and ${\cal M}$ is free,
by the reduction from the problem of deciding the decomposability of a hypergraph into two connected spanning sub-hypergraphs, which is known to be NP-complete~\cite{frank:2003}.

The underlying combinatorial structure of basic decompositions, especially a relation to matroid union, is currently unclear. 
As remarked in introduction, if ${\cal M}$ can be written as the direct sum of $k$ matroids of rank $1$, Theorem~\ref{thm:partition} straightforwardly follows from matroid union theorem.
%

Recall that Theorem~\ref{thm:bar_joint_bar} characterizes the infinitesimal rigidity of bar-joint frameworks with bar-boundary in {\em general} positions. 
We leave it as an open problem whether the assumption of generality can be dropped.

\section*{Acknowledgment}
We thank Satoru Iwata for pointing out the extendability of Theorem~\ref{thm:partition} from linear matroids to general matroids.
The description of body-bar frameworks given in Appendix~\ref{app:bar_description} is based on a discussion with Ileana Streinu and Ciprian Borcea.
The first author is supported by JSPS Grant-in-Aid for Scientific Research (B).

\bibliography{tani20110702}{}

\appendix
\section{Description of Bar-constraints}
\label{app:bar_description}
We may coordinatize the exterior product $\mathbb{R}^{d}\wedge \mathbb{R}^d$ as follows:
For $a=(a_1,a_2,\dots, a_d)\in \mathbb{R}^d$ and $b=(b_1,b_2,\dots, b_d)\in \mathbb{R}^d$, 
\begin{equation}
a\wedge b= 
\Bigg(
\Bvector{(1,2)}{\begin{vmatrix} a_1 & a_2 \\ b_1 & b_2 \end{vmatrix}}, 
\Bvector{(1,3)}{-\begin{vmatrix}a_1 & a_3 \\ b_1 & b_3 \end{vmatrix}}, 
\Bvector{}{\ \ \cdots \ \ },  
\Bvector{(i,j)}{(-1)^{i+j+1}\begin{vmatrix}a_i & a_j \\ b_i & b_j \end{vmatrix}}, 
\Bvector{}{\ \ \cdots \ \ }, 
\Bvector{(d-1,d)}{\begin{vmatrix}a_{d-1} & a_d \\ b_{d-1} & b_d \end{vmatrix}} 
\Bigg)\in \mathbb{R}^{{d \choose 2}}.
\end{equation}

\vspace{\baselineskip}

Suppose we are given
rigid bodies $B_1$ and $B_2$ in $\mathbb{R}^d$, which can be identified
with a pair $(p_i,M_i)$ of a point $p_i\in \mathbb{R}^d$ and an orthogonal matrix $M_i\in SO(d)$ for each $i=1,2$.
Namely, each $(p_i,M_i)$ is a local Cartesian coordinate system for each body.
We consider a situation, where the bodies $B_1$ and $B_2$ are connected by a bar.
We denote the endpoints of the bars by $p_1+M_1q_1$ and $p_2+M_2q_2$, where $q_i$ is the coordinate of each endpoint (joint) in the coordinate system 
of each body.

The constraint by the bar can be written by
\begin{equation}
\langle p_2+M_2q_2-p_1-M_1q_1,
p_2+M_2q_2-p_1-M_1q_1\rangle
=\ell^2
\end{equation}
for some $\ell\in \mathbb{R}$.
If we take the differentiation with variables $p_i$ and $M_i$, we get 
\begin{equation}
\langle p_2+M_2q_2-p_1-M_1q_1,
\dot{p}_2+{\dot M}_2q_2-\dot{p}_1-\dot{M}_1q_1 \rangle   
=0
\end{equation}
We may simply assume $p_i=0$ and $M_i=I_d$. Then by setting $h=q_2-q_1$ and $\dot{M}_i=A_i$ with a skew-symmetric matrix $A_i$,
\begin{equation}
\label{eq:const}
\langle h , \dot{p}_2+A_2q_2-\dot{p}_1-A_1q_1\rangle =0.
\end{equation}

Also we denote a skew-symmetric matrix $A$ by  
\begin{equation}
A=\begin{pmatrix} 
0 & -w_{1,2} & \cdots & \cdots & \cdots & \cdots & (-1)^{d+1}w_{1,d} \\ 
w_{1,2} & 0 &  &   &       &        & \vdots \\
 &    &         &   &       &        & \\
\vdots &    &  \ddots       &         & (-1)^{i+j}w_{i,j} &  & \vdots \\
 &    &         &   &       &        & \\
\vdots &    &   &     0    &       &        & \vdots \\
 &    &         &   &       &        & \\
\vdots &    & (-1)^{i+j+1}w_{i,j} &   &   \ddots     &    &  \vdots \\ 
 &    &         &   &       &        & \\
\vdots       &    &         &         &       &     0   & w_{d-1,d} \\
(-1)^{d}w_{1,d}       & \cdots    &   \cdots      &  \cdots       &  \cdots     &  -w_{d-1,d}      & 0   \\
\end{pmatrix}
\end{equation}
and let  $w=\begin{pmatrix}w_{1,2} & w_{1,3} & \cdots & w_{d-1,d}\end{pmatrix}\in \mathbb{R}^{{d \choose 2}}$.
Then, for any $h\in \mathbb{R}^d$ and $q\in \mathbb{R}^d$, we have
\begin{equation}
\label{eq:1}
\langle h, Aq \rangle=\langle q\wedge h, w \rangle.
\end{equation}
Therefore, we can simply describe the infinitesimal bar-constraint (\ref{eq:const}) by 
\begin{equation}
\label{eq:const2}
\langle q_2-q_1, \dot{p}_2-\dot{p}_1\rangle + \langle q_2\wedge q_1, w_2-w_1 \rangle =0,
\end{equation}
where $w_1\in \mathbb{R}^{{d\choose 2}}$ and $w_2\in \mathbb{R}^{{d\choose 2}}$ denote the ${d\choose 2}$-dimensional vectors corresponding to $A_1$ and $A_2$, respectively. 

We call a pair $s_i=(w_i,p_i)\in \mathbb{R}^{d\choose 2}\times \mathbb{R}^d$ a {\em screw motion}, which can be identified with a vector in $\mathbb{R}^{d+1\choose 2}$. 
Using the homogeneous coordinate of $q_i$ in $\mathbb{P}^d$, (\ref{eq:const2}) is written as 
\begin{equation}
\label{eq:const4}
\langle (q_2,1)\wedge (q_1,1)), s_2-s_1\rangle =0.
\end{equation}

\end{document}